\newtheorem{theorem}{Theorem}[section]
\newtheorem{lemma}[theorem]{Lemma}
\newtheorem{prop}[theorem]{Proposition}
\theoremstyle{definition}
\newtheorem{definition}[theorem]{Definition}
\theoremstyle{remark}
\newtheorem{remark}{Remark}
\newcommand{\C}{\mathbb C}
\newcommand{\Z}{\mathbb Z}
\newcommand{\Q}{\mathbb Q}
\newcommand{\A}{\mathbb A}
\newcommand{\p}{\mathfrak p}
\newcommand{\g}{\mathfrak g}
\newcommand{\cO}{\mathcal O}
\newcommand{\cL}{\mathcal L}
\newcommand{\cF}{\mathcal F}
\newcommand{\cP}{\mathcal P}
\newcommand{\cH}{\mathcal H}
\newcommand{\Ad}{\text{Ad}}
\newcommand{\End}{\text{End}}
\newcommand{\tl}{\widetilde{\lambda}}
\newcommand{\tc}{\widetilde{\chi}}
\newcommand{\be}{\begin{equation}}
\newcommand{\ee}{\end{equation}}
\newcommand{\bes}{\begin{equation*}}
\newcommand{\ees}{\end{equation*}}
\newcommand{\ba}{\begin{eqnarray}}
\newcommand{\ea}{\end{eqnarray}}
\newcommand{\bas}{\begin{eqnarray*}}
\newcommand{\eas}{\end{eqnarray*}}
\title{Subconvexity for $L$-functions on ${\rm U}(n) \times {\rm U}(n+1)$ in the depth aspect}
\author{Simon Marshall}
\address{Department of Mathematics\\
University of Wisconsin -- Madison\\
480 Lincoln Drive\\
Madison\\
WI 53706, USA}
\email{marshall@math.wisc.edu}
\thanks{The author was supported by NSF grants DMS-1902173 and DMS-1954479, and was also supported as the Neil Chriss and Natasha Herron Chriss Founders' Circle Member while at the IAS in 2017--18.}
\begin{document}

\maketitle

\begin{abstract}

Let $E/F$ be a CM extension of number fields, and let $H < G$ be a unitary Gan--Gross--Prasad pair defined with respect to $E/F$ that is compact at infinity.  We consider a family $\cF$ of automorphic representations of $G \times H$ that is varying at a finite place $w$ that splits in $E/F$.  We assume that the representations in $\cF$ satisfy certain conditions, including being tempered and distinguished by the GGP period.  For a representation $\pi \times \pi_H \in \cF$ with base change $\Pi \times \Pi_H$ to ${\rm GL}_{n+1}(E) \times {\rm GL}_n(E)$, we prove a subconvex bound
\[
L(1/2, \Pi \times \Pi_H^\vee) \ll C(\Pi \times \Pi_H^\vee)^{1/4 - \delta}
\]
for any $\delta < \tfrac{1}{4n(n+1)(2n^2 + 3n + 3)}$.  Our proof uses the unitary Ichino--Ikeda period formula to relate the central $L$-value to an automorphic period, before bounding that period using the amplification method of Iwaniec--Sarnak.

\end{abstract}

\section{Introduction}

The purpose of this paper is to prove a subconvex bound for certain families of $L$-functions on ${\rm U}(n) \times {\rm U}(n+1)$, by first using the unitary Ichino--Ikeda period formula to relate the central $L$-value to an automorphic period, and then using the amplification method of Iwaniec--Sarnak to bound that period.    We begin by stating the period bound we shall prove, as it holds for families of forms that are, in principle, more general than those for which our subconvex bound holds.

\subsection{The period bound}

We shall consider the following unitary GGP periods.  Let $E/F$ be a CM extension of number fields.  Let $V$ be a non-degenerate Hermitian space of dimension $n+1$ with respect to $E/F$ that is positive definite at all infinite places, and let $V_H \subset V$ be an $n$-dimensional subspace, which is automatically non-degenerate.  We define the unitary groups $G = {\rm U}(V)$ and $H = {\rm U}(V_H)$.  We shall consider periods of automorphic forms on $G \times H$ over the diagonally embedded copy of $H$.  If $f$ and $f_H$ are continuous functions on the adelic quotients $[G]$ and $[H]$ we introduce the notation
\[
\cP(f, f_H) = \int_{ [H] } f(h) f_H(h) dh
\]
for this period.

We next define the family of automorphic representations $\pi \times \pi_H$ on $G \times H$ we shall work with.  Let $S$ be the finite set of places chosen in Section \ref{sec:notation1}, which contains the infinite places, and all places at which $G$ and $H$ are ramified.  We also let $K_v$ and $K_{H,v}$ be the compact open subgroups of $G_v$ and $H_v$ chosen there, which are assumed to be hyperspecial for $v \notin S$.  Let $w \notin S$ be a place at which $E/F$ splits.  This is the place at which our representations will vary, and will be distinguished throughout the paper.  At $w$, we will assume that $\pi_w$ and $\pi_{H,w}$ are principal series representations induced from a \textit{generic pair} of characters.  To define this, let $\cO_w$ be the completion of the ring of integers of $F$ at $w$, $\p$ the maximal ideal of $\cO_w$, and $q$ the order of the residue field.

\begin{definition}
\label{genericpair}

Let $l \geqslant 1$ be given.  Let $\chi$ and $\chi_H$ be complex characters of $(F_w^\times)^{n+1}$ and $(F_w^\times)^n$ respectively, which are not assumed to be unitary.

\begin{enumerate}

\item We say that $\chi = (\chi_1, \ldots, \chi_{n+1})$ is generic of conductor $\p^l$ if it is trivial on $(1 + \p^l)^{n+1}$, and the components $\chi_i$ have the property that $\chi_i \chi_j^{-1}$ has conductor $\p^l$ for all $i \neq j$.  We make the analogous definition for $\chi_H$.

\item We say that $(\chi, \chi_H)$ is a generic pair of conductor $\p^l$ if both $\chi$ and $\chi_H$ are generic of conductor $\p^l$, and if $\chi_i \chi_{H,j}^{-1}$ has conductor $\p^l$ for all $i$ and $j$

\end{enumerate}

\end{definition}

Given a generic pair $(\chi, \chi_H)$ of conductor $\p^l$, we may form the representation $\text{Ind}_{B_w}^{G_w} \chi$ of $G_w$ unitarily induced from a Borel subgroup $B_w$, which we denote by $I(\chi)$, and define the representation $I(\chi_H)$ of $H_w$ similarly.  Note that $I(\chi)$ and $I(\chi_H)$ are irreducible, by a theorem of Bernstein--Zelevinsky \cite[Theorem 4.2]{BZ}.  These will be our choices for $\pi_w$ and $\pi_{H,w}$.

 In order to choose our test vectors at $w$, we shall also need the notion of a compatible pair of microlocal lift vectors $\phi \in I(\chi)$ and $\phi_H \in I(\chi_H)$.  We define this formally in Definition \ref{compatdef}, but summarize that definition here.  If $l$ is even, then in Definition \ref{chitypedef} we define the notion of a $\chi$-type, which is a pair $(J, \widetilde{\lambda})$ where $J$ is a compact open subgroup of $K_w$ and $\widetilde{\lambda}$ is a character of $J$.  By a theorem of Roche \cite[Thm. 7.7]{Ro}, they are in fact $\mathfrak{s}$-types, in the sense of Bushnell--Kutzko, for the supercuspidal data $\mathfrak{s}$ naturally associated with $\chi$, but we will not use this fact.  We then say that $\phi \in I(\chi)$ is a \textit{microlocal lift vector} associated to $(J, \widetilde{\lambda})$ if $\phi$ transforms by $\widetilde{\lambda}$ under $J$.  (We will show that a nonzero microlocal lift vector is associated to at most one $\chi$-type.)

We define the notion of a $\chi_H$-type $(J_H, \tl_H)$, and microlocal lift vectors $\phi_H \in I(\chi_H)$, for $H_w$ in a similar way.  We say that $(J, \widetilde{\lambda})$ and $(J_H, \tl_H)$ are compatible $(\chi, \chi_H)$-types if $\tl$ and $\tl_H$ agree on $J \cap J_H$ (which will be equal to the principal congruence subgroup $K_{H,w}(\p^l)$ by Lemma \ref{Atrans}), and that microlocal lift vectors $\phi$ and $\phi_H$ are compatible if their types are.

We may now define the family $\cF_\text{P}$ of automorphic representations for which our period bound holds.

\begin{definition}

Let $\cF_\text{P}$ be the set of automorphic representations $\pi \times \pi_H$ of $G \times H$ satisfying the following conditions:

\begin{enumerate}

\item[(C1)] $\pi_v$ and $\pi_{H,v}$ have nonzero fixed vectors under $K_v$ and $K_{H,v}$ for finite places $v \neq w$.  In particular, $\pi_v$ and $\pi_{H,v}$ are unramified outside of $S \cup \{ w \}$.

\item[(C2)] $\pi_w$ and $\pi_{H,w}$ are principal series representations induced from a generic pair of characters of square conductor $\p^{2l}$ for some $l \geqslant 1$.

\end{enumerate}

\end{definition}

Our main automorphic period bound is the following.

\begin{theorem}
\label{mainperiod}

Let $\pi \times \pi_H \in \cF_\textup{P}$.  We denote the representation $\pi_\infty$ of $G_\infty$ by $\mu$, and let $d_\mu$ denote its dimension.  Let $\phi = \otimes_v \phi_v \in \pi$ and $\phi_H = \otimes_v \phi_{H,v} \in \pi_H$ be $L^2$-normalized vectors satisfying the following:

\begin{enumerate}

\item $\phi_v$ and $\phi_{H,v}$ are invariant under $K_v$ and $K_{H,v}$ for finite places $v \neq w$.

\item $\phi_w$ and $\phi_{H,w}$ are compatible microlocal lift vectors in the sense of Definition \ref{compatdef}.

\end{enumerate}
Then
\[
\cP_H(\phi, \overline{\phi_H} ) \ll d_\mu q^{(n/2 - \delta)l}
\]
for any $\delta < \tfrac{1}{4n^2 + 6n + 6}$.

\end{theorem}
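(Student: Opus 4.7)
The plan is to bound $|\cP_H(\phi, \overline{\phi_H})|^2$ by the Iwaniec--Sarnak amplification method applied to an automorphic kernel on $G$, and then take a square root at the end. I construct a test function $f = \bigotimes_v f_v$ on $G(\A)$ whose local components isolate $\pi$ and $\phi$. At infinity, $f_\infty$ is a normalized matrix coefficient of $\mu$, so that $\pi'_\infty(f_\infty)$ is the rank-one projector onto $\C\phi_\infty$ when $\pi'_\infty \simeq \mu$ and vanishes otherwise; this normalization is the source of the $d_\mu$ prefactor in the final bound. At $w$, I take $f_w$ to be the matrix coefficient of the microlocal lift vector, which by the $\chi$-type theory of Roche cited above projects onto the $(J,\tl)$-isotypic line in $I(\chi)$. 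At the other places of $S$, $f_v$ is a fixed idempotent selecting the correct local component of $\pi$. At $v \notin S \cup \{w\}$, I place an amplifier $\sum_p a_p T_p$ built from Hecke double cosets at split primes of norm at most $X$, with coefficients chosen in the manner of Iwaniec--Sarnak so as to maximize the amplifier eigenvalue $\Lambda$ on $\pi$.

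Replacing $f$ with $f^{*} * f$ for spectral positivity, the automorphic kernel
\[
K(g_1, g_2) = \sum_{\gamma \in G(F)} (f^{*} * f)(g_1^{-1} \gamma g_2)
\]
has a spectral expansion on $L^2([G])$ whose $\pi$-part contains $|\Lambda|^2 |\cP_H(\phi, \overline{\phi_H})|^2$ and whose remaining terms are non-negative. Pairing $K(h_1, h_2)$ against $\phi_H(h_1) \overline{\phi_H(h_2)}$ over $[H] \times [H]$ and using positivity then gives
\[
|\Lambda|^2 \, |\cP_H(\phi,\overline{\phi_H})|^2 \leq \sum_{\gamma \in G(F)} \int_{[H]^2} (f^{*} * f)(h_1^{-1} \gamma h_2) \, \phi_H(h_1) \overline{\phi_H(h_2)} \, dh_1 \, dh_2.
\]
The support of $f_w^{*} * f_w$ is an explicit Borel-neighborhood inside $K_w$, of volume $\asymp q^{-nl}$ relative to $K_w$, determined by the $\chi$-type $(J,\tl)$; the amplifier constrains $\gamma$ at the remaining primes; and the compactness of $G_\infty$ handles the archimedean variable. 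Altogether the right-hand side becomes a weighted lattice-point count in $G(F)$.

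I then split this count into a diagonal piece, from $\gamma \in H(F)$ sitting in the support of every local factor, which produces the ``trivial'' contribution $\sim d_\mu^2 q^{nl}$ to $|\cP_H(\phi,\overline{\phi_H})|^2$ after dividing by $|\Lambda|^2$, and an off-diagonal piece controlled by counting $(H(F) \times H(F))$-orbits of $\gamma \in G(F)$ that simultaneously meet both the $w$-support and the amplifier support at some split prime. Balancing the amplifier length $X = q^{\beta l}$ against the polynomial growth of the off-diagonal count in $X$ extracts a power saving $q^{-2\delta l}$ with $\delta < 1/(4n^2 + 6n + 6)$, and taking square roots yields the theorem.

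The main obstacle will be the off-diagonal counting step: one must pin down the precise shape of the convolution $f_w^{*} * f_w$ in terms of the microlocal lift structure and the compatibility of the types $(J,\tl)$ and $(J_H,\tl_H)$, and then prove a sharp bound for the number of rational $\gamma \in G(F)$ lying simultaneously in a Borel-neighborhood of depth $l$ at $w$ and in a Hecke double coset of bounded size at some amplifier place. The specific denominator $4n^2 + 6n + 6$ in the final $\delta$ encodes the geometric complexity of this count together with the combinatorial structure of the microlocal lift vectors at $w$.
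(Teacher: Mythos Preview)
Your overall architecture --- amplified relative trace inequality, spectral positivity via $f^* * f$, and a diagonal/off-diagonal split --- matches the paper. But you have misidentified where the off-diagonal saving comes from, and this is the heart of the argument.

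You describe the off-diagonal step as a lattice-point count: bound the number of $\gamma \in G(F)$ that meet both the $w$-support and the amplifier support. The paper does \emph{not} do this; it bounds the number of $\gamma$ trivially by $\sum_\gamma |T(\gamma)| \ll \|T\|_1 \ll N^{n^2+n+2}$. The power saving instead comes from two ingredients you have not identified. First, a diophantine repulsion lemma (Lemma~\ref{Hclose}): any $\gamma \in G(F) \setminus \widetilde{H}(F)$ with $\|\gamma\|^w \ll N^{2n+2}$ must satisfy $d_w(\gamma_w, K_{\widetilde{H},w}) \gg N^{-(n+1)}$, i.e.\ off-diagonal $\gamma$ are quantifiably far from $\widetilde{H}$ at $w$. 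Second, a local operator norm bound at $w$ (Proposition~\ref{offdiagw}): writing the inner $K_{H,w}^1$-integral as $\langle A_\delta f_{H,x}, f_{H,y}\rangle$ for an integral operator $A_\delta$ on $L^2(K_{H,w}^1)$, and letting $\Pi$ be the projection averaging over $J_H^1$, one has $\|\Pi A_\delta \Pi\| \ll q^{(n-1/2)l} d(\delta, K_{\widetilde{H},w})^{-1/2}$. The compatibility of the $(\chi,\chi_H)$-types enters here, as a transversality statement between the support set $S_1 \subset K_H^1$ and the cosets $xJ_H^1$ (Proposition~\ref{transverse}), not as a constraint that reduces the count of $\gamma$. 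Combining these, every off-diagonal $\gamma$ contributes with an extra factor $q^{-l/2} N^{(n+1)/2}$, and it is this --- not a sharp count --- that is balanced against the diagonal.

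Two further corrections. The diagonal/off-diagonal split must be made along $\widetilde{H}(F) = Z(F)H(F)$, not $H(F)$; the center preserves the line $Ex_{n+1}$, so the repulsion lemma is naturally stated relative to $\widetilde{H}$, and without this enlargement Lemma~\ref{Hclose} would fail. And the diagonal contribution is not ``trivial'': it is $\ll N^{1+\epsilon} d_\mu^2 q^{nl}$, not $\sim N^2 d_\mu^2 q^{nl}$, and obtaining the exponent $1+\epsilon$ (rather than $2$) requires bounding the restriction of the Hecke operator $\tau(v,\mu)$ to $\widetilde{H}$ in a tempered representation (Lemma~\ref{Hecke restriction}), which is where temperedness of $\pi_H$ at split places is used. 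Finally, the $w$-support is a fattened torus conjugate $\widetilde{T}(l)^k$, not a Borel neighborhood.
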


Note that the dependence of the constant on $\mu$ in Theorem \ref{mainperiod}, and Proposition \ref{trivialbound} below, is not sharp.

\begin{remark}

Theorem \ref{mainperiod} makes use of the fact that $\pi$ and $\pi_H$ are tempered at all places of $F$ that split in $E$, which follows from work of Caraiani \cite{Caraiani} and Labesse \cite{Lab} as we establish in Section \ref{sec:tempered}.  However, this is not essential -- one may obtain a version of Theorem \ref{mainperiod} with a weaker exponent if one only assumes that $\pi$ and $\pi_H$ are $\theta$-tempered at almost all split places, for some $0 \leqslant \theta < 1/2$.  (We refer to Section \ref{sec:spherical} for the definition of $\theta$-temperedness.)  The modifications required in this case are simple: when proving the bound of Proposition \ref{diagonal bound} for the diagonal contribution on the geometric side of the relative trace formula, one applies Lemma \ref{Hecke restriction} with this $\theta$, and obtains a bound of $N^{1 + 2\theta + \epsilon} d_\mu^2 q^{nl}$ instead of $N^{1 + \epsilon} d_\mu^2 q^{nl}$.  The rest of the proof proceeds as before, and one obtains an exponent of $\delta < \tfrac{1-2\theta}{ 2(2n^2 +3n +3 - 4\theta) }$ in the final period bound.

We point this out because $\theta$-temperedness, for some $0 \leqslant \theta < 1/2$, is known for a much larger collection of automorphic forms than temperedness is.  For instance, $\theta$ temperedness is known for cusp forms on ${\rm GL}_N$, by the bounds of \cite{LRS} towards the generalized Ramamujan conjecture, and can be deduced for certain forms on classical groups by endoscopy or base change arguments.

This is in contrast with Theorem \ref{mainsubconvex}, in which temperedness is needed to apply the Ichino--Ikeda formula of \cite{BPCZ}.

\end{remark}

The bound $\cP_H(\phi, \overline{\phi_H} ) \ll q^{nl/2}$ may be thought of as the trivial bound for this period, and it corresponds to the convex bound for $L(1/2, \Pi \times \Pi_H^\vee)$ in Theorem \ref{mainsubconvex} below.  It holds in greater generality than Theorem \ref{mainperiod}, and only uses `local' information about $\phi$ and $\phi_H$, by which we mean information about how they transform under a compact subgroup.  As this may be of independent interest, we state it here as a separate result.

\begin{prop}
\label{trivialbound}

Let $(\chi, \chi_H)$ be a generic pair of characters of conductor $\p^{2l}$.  Let $\phi \in L^2( [G])$ and $\phi_H \in L^2([H])$ be $L^2$-normalized functions satisfying the following:

\begin{enumerate}

\item $\phi$ is $\mu$-isotypic under the action of $G_\infty$, for some irreducible representation $\mu$ of $G_\infty$.

\item $\phi$ and $\phi_{H}$ are invariant under $K_v$ and $K_{H,v}$ for finite places $v \neq w$.

\item There are compatible $(\chi, \chi_H)$-types $(J, \tl)$ and $(J_H, \tl_H)$, in the sense of Definition \ref{compatdef}, such that $\phi$ transforms under $J$ according to $\tl$, and likewise for $\phi_H$.

\end{enumerate}
Then $\cP_H(\phi, \overline{\phi_H} ) \ll d_\mu q^{nl/2}$, where $d_\mu$ denotes the dimension of $\mu$.

\end{prop}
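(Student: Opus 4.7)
The plan is to combine Cauchy--Schwarz with a pre-trace formula (Bessel) estimate for the restriction of $\phi$ to $[H]$.

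By Cauchy--Schwarz, since $\phi_H$ is $L^2$-normalised on $[H]$,
\[
\bigl|\cP_H(\phi, \overline{\phi_H})\bigr|^2 \;\leq\; \bigl\|\phi|_{[H]}\bigr\|_{L^2([H])}^2,
\]
so the proposition reduces to the restriction estimate $\|\phi|_{[H]}\|_{L^2([H])}^2 \ll d_\mu^2\, q^{nl}$.

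To bound the restriction norm I would introduce a factorisable idempotent test function
\[
k \;=\; k_\infty \otimes k_w \otimes \bigotimes_{v \neq w, \infty} \mathbf{1}_{K_v}/\Vol(K_v)
\]
on $G(\A)$ with $R(k)\phi = \phi$: take $k_\infty = d_\mu\,\overline{\chi_\mu}$, the idempotent for the $\mu$-isotypic subspace (legitimate since $G_\infty$ is compact), and $k_w = \Vol(J)^{-1}\,\overline{\tl}\,\mathbf{1}_J$, the idempotent for the $(J, \tl)$-isotypic subspace of $\pi_w$. Then $R(k)$ is the orthogonal projection onto a closed subspace of $L^2([G])$ containing $\phi$, so by Bessel's inequality $|\phi(g)|^2 \leq K(g, g)$ pointwise, where $K(g, g') = \sum_{\gamma \in G(F)} k(g^{-1} \gamma g')$ is the automorphic kernel. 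Integrating over $[H]$ and unfolding,
\[
\|\phi|_{[H]}\|_{L^2([H])}^2 \;\leq\; \int_{[H]} K(h, h)\, dh \;=\; \sum_{\gamma \in H(F) \backslash G(F)} \int_{H_\gamma(F) \backslash H(\A)} k(h^{-1}\gamma h)\, dh.
\]

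The identity orbit ($\gamma = e$) contributes $\Vol([H]) \cdot k(e) \asymp d_\mu^2\, \Vol(J)^{-1}$. The key local input, which in particular forces the exponent $n$ (rather than the larger $n(n+1)/2$ one might naively expect), is the volume estimate $\Vol(J)^{-1} \asymp q^{nl}$ coming from the explicit description of the compatible-type subgroup $J$ in Definition~\ref{chitypedef}: the compatibility requirement with the $H$-side pins down $J$ as a specific subgroup tailored to the pair $(G, H)$. For non-identity $\gamma$, the support conditions $h^{-1}\gamma h \in \mathrm{supp}(k_v)$ at $w$ and at the other finite places force $\gamma$ into a compact subset of $G(\A)$, and by discreteness of $G(F)$ only finitely many $H(F)$-orbits contribute; each orbital integral is bounded by $\|k\|_\infty$ times a volume in $H(\A)$ that can be absorbed into the identity contribution.

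The main technical obstacle is the local analysis at $w$: establishing the volume estimate $\Vol(J)^{-1} \asymp q^{nl}$ and controlling the non-identity orbital integrals uniformly. Both come down to the explicit structure of the compatible $\chi$-type $(J, \tl)$, and once they are in hand the pre-trace inequality above gives $\|\phi|_{[H]}\|_{L^2([H])}^2 \ll d_\mu^2 q^{nl}$, which combined with Cauchy--Schwarz completes the proof.
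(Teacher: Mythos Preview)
Your volume claim $\Vol(J)^{-1} \asymp q^{nl}$ is incorrect, and this is not a technicality but the heart of the matter.  By Definition~\ref{chitypedef}, a $\chi$-type $(J,\tl)$ has $J$ equal to a $K_w$-conjugate of $\widetilde{T}(l) = T_c K_w(l)$ inside $G_w \simeq {\rm GL}_{n+1}(F_w)$; the compatibility condition with the $\chi_H$-type on $H_w$ only specifies \emph{which} conjugate (via the element $g_0$ of Lemma~\ref{adjproj}), and does not change the volume.  One has $[K_w : \widetilde{T}(l)] \asymp q^{n(n+1)l}$, so $\Vol(J)^{-1} \asymp q^{n(n+1)l}$, and the identity term in your pre-trace expansion is $k(e) \asymp d_\mu^2\, q^{n(n+1)l}$.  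Your method therefore yields $\|\phi|_{[H]}\|_{L^2([H])}^2 \ll d_\mu^2\, q^{n(n+1)l}$, which is weaker than the claimed $d_\mu^2\, q^{nl}$ by a factor of $q^{n^2 l}$.

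The underlying reason the pointwise Bessel bound $|\phi(h)|^2 \leqslant K(h,h)$ cannot succeed here is that $K(h,h)$ is insensitive to the fact that $h$ lies in $H$: its identity term is simply $k(e)$, regardless of the point of evaluation.  The paper instead applies the relative trace inequality of Lemma~\ref{ampineq} directly to the period, which produces an integral over $[H] \times [H]$ of $\sum_{\gamma \in G(F)} k(x^{-1}\gamma y)$.  After localising $\phi_H$ and using an auxiliary place to kill off $\gamma \notin H(F)$, the resulting quantity is $\langle R_H(k|_{H(\A)}) f_{H,i}, f_{H,i}\rangle \leqslant \| k|_{H(\A)} \|_{L^1(H(\A))}$.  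The crucial local saving now comes from the transversality statement $J \cap H_w = K_{H,w}(\p^l)$ of Lemma~\ref{Atrans}: the restriction $k_w|_{H_w}$ has size $q^{n(n+1)l}$ on a set of volume $\asymp q^{-n^2 l}$, giving $\|k_w|_{H_w}\|_1 \asymp q^{nl}$.  It is this restriction-to-$H$ step, absent from your argument, that converts the exponent $n(n+1)$ into $n$.
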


\subsection{The subconvex bound}

Our subconvex bound will hold for the following family of representations.

\begin{definition}

Let $\cF_\text{SC}$ be the set of automorphic representations $\pi \times \pi_H$ of $G \times H$ satisfying (C1) and (C2), as well as the following extra conditions:

\begin{enumerate}

\item[(C3)] $\pi_\infty \in \Theta_\infty$, and $\pi_{H,\infty} \in \Theta_{H,\infty}$, where $\Theta_\infty$ and $\Theta_{H,\infty}$ are finite sets of irreducible representations of $G_\infty$ and $H_\infty$.

\item[(C4)] $\pi$ and $\pi_H$ are tempered, and admit weak base change lifts $\Pi$ and $\Pi_H$ to ${\rm GL}_{n+1}$ and ${\rm GL}_n$ that satisfy the conditions for being a Hermitian Arthur parameter given in \cite[Section 1.1.3]{BPCZ}.

\item[(C5)] $(\pi_v, \pi_{H,v})$ is locally distinguished for all $v$, in the sense that there is a nonzero $H_v$-intertwining map $\pi_v \to \pi_{H,v}$.

\end{enumerate}

\end{definition}

\begin{remark}
\label{tempered2}

We note that condition (C4) is implied by the work of Mok \cite{Mo} and Kaletha--Minguez--Shin--White \cite{KMSW}, although this is currently conditional.  Moreover, the unconditional results of Caraiani and Labesse do not suffice here; while \cite{Caraiani} implies that $\Pi$ and $\Pi_H$ are tempered everywhere, \cite{Lab} does not establish local-global compatibility at all places, so we cannot deduce that $\pi$ and $\pi_H$ are tempered everywhere.  Likewise, while \cite{Lab} proves the existence of the weak base change, it does not give the extra conditions of \cite[Section 1.1.3]{BPCZ}, which Beuzart-Plessis informs us are needed for their proof.

\end{remark}

Our main subconvex bound is the following.

\begin{theorem}
\label{mainsubconvex}

Let $\pi \times \pi_H \in \cF_\textup{SC}$, and let $\Pi \times \Pi_H$ be its base change to ${\rm GL}_{n+1}(\A_E) \times {\rm GL}_n(\A_E)$.  Then the $L$-function $L(1/2, \Pi \times \Pi_H^\vee)$ satisfies the bound
\[
L(1/2, \Pi \times \Pi_H^\vee) \ll q^{l n (n+1) (1 - 4 \delta )} \ll C(\Pi \times \Pi_H^\vee)^{1/4 - \delta}
\]
for any $\delta < \tfrac{1}{4n(n+1)(2n^2 + 3n + 3)}$.

\end{theorem}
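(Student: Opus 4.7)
The plan is to convert the period bound of Theorem \ref{mainperiod} into an $L$-value bound by invoking the unitary Ichino--Ikeda formula proved in \cite{BPCZ}, whose hypotheses are exactly the conditions collected in (C4). Schematically, for factorizable test vectors this identity reads
\[
\frac{|\cP_H(\phi, \overline{\phi_H})|^2}{\|\phi\|^2 \|\phi_H\|^2} = \Delta_G \cdot \frac{L(1/2, \Pi \times \Pi_H^\vee)}{L(1, \pi, \Ad)\, L(1, \pi_H, \Ad)} \cdot \prod_v \alpha_v^*(\phi_v, \phi_{H,v}),
\]
where the $\alpha_v^*$ are normalized local matrix coefficient integrals that equal $1$ at unramified places with spherical vectors.

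I take $\phi$ and $\phi_H$ as in Theorem \ref{mainperiod}: $K_v$-fixed at finite $v \ne w$, compatible microlocal lift vectors at $w$, and $\mu$-isotypic at infinity for $\mu = \pi_\infty$. By (C1), $\alpha_v^* = 1$ for finite $v \notin S \cup \{w\}$. By (C3), the archimedean and bad-finite contributions $\prod_{v \in S} \alpha_v^*$ depend only on data from finite sets and so are $O(1)$. Standard lower bounds on adjoint $L$-values at tempered points, available under (C4), give $[L(1,\pi,\Ad) L(1,\pi_H,\Ad)]^{-1} \ll q^{\epsilon l}$.

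The essential local input is the computation of $\alpha_w^*$ at the compatible microlocal lifts in $I(\chi), I(\chi_H)$ of conductor $\p^{2l}$. Since $E/F$ splits at $w$, we have $G_w \cong \mathrm{GL}_{n+1}(F_w)$ and $H_w \cong \mathrm{GL}_n(F_w)$, and the unnormalized integral $\alpha_w$ becomes a local Rankin--Selberg matrix coefficient integral of Jacquet--Piatetski-Shapiro--Shalika type. Using the explicit description of microlocal lifts from Definition \ref{compatdef} together with the Iwasawa decomposition, a direct computation should yield $\alpha_w^* \asymp q^{-l n^2}$ up to a constant independent of $l$. This size matches the naive conductor prediction: the $w$-part of the conductor of $\Pi \times \Pi_H^\vee$ equals $\p^{4l n(n+1)}$ (the $n(n+1)$ pairs of characters each of conductor $\p^{2l}$, doubled because $E_w \cong F_w \times F_w$), so a conductor-optimal pair of vectors on both groups must produce a normalized local factor of this order.

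Combining these inputs, Theorem \ref{mainperiod} gives $|\cP_H(\phi,\overline{\phi_H})|^2 \ll d_\mu^2 q^{(n - 2\delta')l}$ for any $\delta' < 1/(4n^2 + 6n + 6)$, and rearranging the Ichino--Ikeda identity yields
\[
L(1/2, \Pi \times \Pi_H^\vee) \ll q^{l n^2} \cdot |\cP_H|^2 \cdot q^{\epsilon l} \ll q^{l n(n+1) - 2\delta' l + \epsilon l},
\]
which matches $q^{l n(n+1)(1 - 4\delta)}$ for $\delta = \delta'/(2n(n+1)) - \epsilon$; letting $\delta'$ approach its supremum and $\epsilon \to 0$ gives the full range $\delta < 1/(4n(n+1)(2n^2+3n+3))$. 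The main obstacle is the local computation of $\alpha_w^*$: one must verify that the microlocal lift vectors, chosen on the period side to optimize the amplification argument, are simultaneously conductor-optimal for the local Rankin--Selberg integral, so that no power of $q^l$ is lost in the transfer. A secondary technical point is confirming that the Hermitian Arthur parameter hypotheses in (C4) cover the precise version of the Ichino--Ikeda formula being used, and reconciling its normalization with that implicit in Theorem \ref{mainperiod}.
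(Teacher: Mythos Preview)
Your overall strategy --- Ichino--Ikeda from \cite{BPCZ}, compatible microlocal lift vectors at $w$, and Theorem \ref{mainperiod} --- matches the paper's, and your arithmetic converting $\delta'$ to $\delta$ is correct. There is one genuine gap and one place where you overestimate the difficulty.

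The gap is at the finite places $v \in S$. You claim that ``by (C3), the archimedean and bad-finite contributions $\prod_{v \in S} \alpha_v^*$ depend only on data from finite sets and so are $O(1)$.'' But (C3) constrains only the archimedean components; at finite $v \in S$ the local representations $\pi_v, \pi_{H,v}$ range over an infinite set (condition (C1) only requires them to have $K_v$- and $K_{H,v}$-fixed vectors). Moreover, since the local periods sit in the denominator when you solve for $L(1/2, \Pi \times \Pi_H^\vee)$, you need a uniform \emph{lower} bound on $\alpha_v^*$, not $O(1)$, and there is no reason the $K_v$-fixed vectors give local periods bounded away from zero --- or even nonzero. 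The paper fixes this by invoking \cite[Lemma 5.2]{Ne1}: for each finite $v \in S$ it produces smaller open subgroups $K_v' < K_v$, $K_{H,v}' < K_{H,v}$ and vectors $\phi_v \in \pi_v^{K_v'}$, $\phi_{H,v} \in \pi_{H,v}^{K_{H,v}'}$ with $|\alpha_v^*(\phi_v,\phi_{H,v})| > c$ for a constant $c$ depending only on the original $K_v$, $K_{H,v}$. One then applies Theorem \ref{mainperiod} with the $K_v$ at these places replaced by $K_v'$.

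The computation of $\alpha_w^*$, which you flag as ``the main obstacle,'' is in fact already available. No Iwasawa decomposition or Rankin--Selberg unfolding is needed: Lemma \ref{Atrans} says that for compatible microlocal lift vectors the support of the diagonal matrix coefficient of $\phi_w$ meets $H_w$ precisely in $K_{H,w}(l)$, so the unnormalized integral $\int_{H_w} \langle \pi_w(h)\phi_w,\phi_w\rangle \overline{\langle \pi_{H,w}(h)\phi_{H,w},\phi_{H,w}\rangle}\,dh$ equals $\textup{vol}(K_{H,w}(l)) \asymp q^{-n^2 l}$; since the local $L$-ratio at $w$ is bounded by temperedness, $\alpha_w^* \asymp q^{-n^2 l}$ follows immediately. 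Finally, a minor slip: to pass from $\cL(1/2,\pi\times\pi_H^\vee)$ to $L(1/2,\Pi\times\Pi_H^\vee)$ you need the \emph{upper} bound $L(1,\Ad\,\pi), L(1,\Ad\,\pi_H) \ll q^{\epsilon l}$, not a lower bound; this is what temperedness together with the functional equation and Phragm\'{e}n--Lindel\"{o}f actually gives.
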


\begin{remark}
\label{conductor}

The contribution to the conductor of $\Pi \times \Pi_H$ from the places above $w$ is equal to $q^{4l n (n+1)}$.  Because $\pi \times \pi_H$ has bounded ramification away from $w$, the same should be true of $\Pi \times \Pi_H$, which would imply that the conductor away from $w$ is bounded and hence that $C(\Pi \times \Pi_H^\vee) \sim q^{4l n (n+1)}$.  However, to the best of our knowledge, actually proving this ramification bound requires the (conditional) results of \cite{KMSW}.  Fortunately, Theorem \ref{mainsubconvex} only requires the weaker statement $C(\Pi \times \Pi_H^\vee) \gg q^{4l n (n+1)}$.

\end{remark}

\begin{remark}

In deducing Theorem \ref{mainsubconvex} from Theorem \ref{mainperiod}, we have used the result \cite[Lemma 5.2]{Ne1} of Nelson to provide a lower bound for the local periods appearing in the Ichino--Ikeda formula at ramified finite places.  This result lets us handle the most general set of local factors of $\pi$ and $\pi_H$ at these places, and could be avoided by assuming that e.g. $E/F$ and $\pi \times \pi_H$ are unramified at finite places away from $w$.

\end{remark}

\subsection{Relation to other results}

The literature on the subconvexity problem is extensive, and we refer to \cite{IS, Mu4} for a survey of known results.  On ${\rm GL}_3$ and ${\rm GL}_3 \times {\rm GL}_2$, we mention the bounds of Blomer--Buttcane \cite{BB1}, Li \cite{Li}, and Munshi \cite{Mu1, Mu2, Mu3}.  In general rank, we mention the results of Hu--Nelson \cite{HN}, Nelson \cite{Ne1, Ne2}, and Liyang Yang \cite{Liyang}, which use a relative trace formula approach related to that of this paper.

We proved Theorem \ref{mainperiod} in 2018, and gave a detailed lecture at the IAS \cite{lecture} on the methods used.  This paper, which has been significantly delayed, implements those methods.  

\subsection{Acknowledgements}

We would like to thank Rapha\"{e}l Beuzart-Plessis, Farrell Brumley, Paul Nelson, Peter Sarnak, and Sug Woo Shin for helpful conversations.  We would also like to thank the IAS for providing an excellent working environment during the 2017--18 special year on Locally Symmetric Spaces: Analytical and Topological Aspects.

\section{Microlocal lift vectors}

In this section, we prove the local representation-theoretic results we shall need at the place $w$.  If $\chi$ is a generic character, we shall define the notion of a microlocal lift vector in the representation $I(\chi)$ and establish their basic properties such as existence, uniqueness, and the support of their diagonal matrix coefficient.  Then, for a generic pair $(\chi, \chi_H)$, we define the notion of compatible microlocal lift vectors in $I(\chi)$ and $I(\chi_H)$, prove that such compatible vectors exist, and establish some transversality results for them.

\subsection{Notation}

As we shall work locally at $w$ in this section, we drop the place from the notation.  We let $F$ be a $p$-adic field, with integers $\cO$ and maximal ideal $\p$.  We let $q$ be the order of the residue field, %not used, but ok
and $\varpi$ a uniformizer.  In the first part of this section, we shall work on the group ${\rm GL}_n(F)$.  We let $B$, $T$, and $K$ be the standard Borel, diagonal subgroup, and maximal compact in ${\rm GL}_n(F)$.  For $l \geqslant 1$ we let $K(l)$ be the standard principal congruence subgroup of $K$.  Let $\g \simeq M_n(F)$ be the Lie algebra of ${\rm GL}_n(F)$.

If $g \in G$, we denote the conjugation action of $g$ on elements $j \in G$ and subsets $J \subset G$ by $j^g := g j g^{-1}$ and $J^g := g J g^{-1}$.  If $J$ is a subgroup of $G$ and $\lambda$ is a character of $J$, then $\lambda^g$ is the character of $J^g$ given by $\lambda^g(j) = \lambda( g^{-1} j g)$.

\subsection{A review of characters of compact subgroups}
\label{sec:characters}

We shall use a correspondence between characters of certain compact open subgroups of ${\rm GL}_n(F)$, and cosets in $\g$, which we recall from e.g. \cite[Section 2]{Ho}.  Let $\cL = M_n(\cO) \subset \g$.  We have $K(l) = 1 + \p^l \cL$, and this induces a group isomorphism $K(l) / K(2l) \simeq \p^l \cL / \p^{2l} \cL$.

Let $\theta$ be an additive character of $F$ of conductor $\cO$, and let $\langle \, , \rangle$ be the trace pairing on $\g$.  We define an isomorphism $\tau : \g \to \widehat{\g}$, where $\widehat{\g}$ is the Pontryagin dual, by $\tau[X](Y) = \theta( \langle X, Y \rangle)$.  The dual of $\p^l \cL$ under $\tau$ is $\p^{-l} \cL$, and so $\tau$ induces an isomorphism
\[
\p^{-2l} \cL / \p^{-l} \cL \xrightarrow{\sim} \widehat{\p^l \cL / \p^{2l} \cL} \simeq \widehat{K(l) / K(2l)}.
\]
It will be convenient for us to rescale this to an isomorphism
\[
\log^*: \cL / \p^l \cL \xrightarrow{\sim} \widehat{K(l) / K(2l)}
\]
defined by $\log^*[X](1+Y) = \theta( \varpi^{-2l} \langle X, Y \rangle)$, with inverse
\[
\exp^*: \widehat{K(l) / K(2l)} \xrightarrow{\sim} \cL / \p^l \cL \simeq M_n(\cO / \p^l).
\]
(The notation is intended to evoke the pullback of characters via the exponential map.) Specializing to the case $n = 1$, we obtain an isomorphism $\exp^*_1$ from the dual group of $(1 + \p^l)^\times / (1 + \p^{2l})^\times$ to $\cO / \p^l$.  We may use this to reformulate the definition of a generic character of square conductor.  If $\chi = (\chi_1, \ldots, \chi_n)$ is a character of $(F^\times)^n$ with all $\chi_i$ trivial on $1 + \p^{2l}$, we may define
\[
D(\chi) = \text{diag}( \exp^*_1(\chi_1), \ldots, \exp^*_1(\chi_1)) \in M_n(\cO / \p^l),
\]
where we have implicitly restricted each $\chi_i$ to $1 + \p^l$.  Then $\chi$ is generic of conductor $\p^{2l}$ if and only if $D(\chi)$ is regular when reduced modulo $\p$. Likewise, if $\chi$ and $\chi_H$ are characters of $(F^\times)^{n+1}$ and $(F^\times)^n$ all of whose coordinates are trivial on $1 + \p^{2l}$, then $(\chi, \chi_H)$ is a generic pair of conductor $\p^{2l}$ if and only if the mod $\p$ reductions of $D(\chi)$ and $D(\chi_H)$ are regular with distinct eigenvalues.

\subsection{Microlocal lift vectors}

Let $\chi$ be a generic character of $(F^\times)^n$ of square conductor $\p^{2l}$, and let $I(\chi) = \text{Ind}_{B}^{{\rm GL}_n} \chi$ be the associated principal series representation of ${\rm GL}_n(F)$.  We shall define microlocal lift vectors in $I(\chi)$.  These vectors will be associated with certain pairs $(J, \widetilde{\lambda})$, where $J$ is a compact open subgroup of $K$ and $\widetilde{\lambda}$ is a character of $J$, and will be characterized by the property that they transform by $\widetilde{\lambda}$ under $J$.  The pairs $(J, \widetilde{\lambda})$ are called $\chi$-types, and are defined in Definition \ref{chitypedef} below.  They are in fact $\mathfrak{s}$-types for the supercuspidal data $\mathfrak{s} = (T, \chi)$ in the sense of Bushnell--Kutzko, by a theorem of Roche \cite[Thm. 7.7]{Ro}, but we will not use this fact.

To define $\chi$-types, we begin by constructing the so-called standard $\chi$-type.  Let $T_c$ be the maximal compact subgroup of $T$, and define $\widetilde{T}(l) = T_c K(l)$ to be the depth-$l$ fattening of $T_c$.

\begin{lemma}

There exists a unique character $\widetilde{\chi}$ of $\widetilde{T}(l)$ that agrees with $\chi$ on $T_c$.  Moreover, $\widetilde{\chi}$ is trivial on $K(2l)$.

\end{lemma}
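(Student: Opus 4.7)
The plan is to construct $\tc$ explicitly, then deduce uniqueness by a filtration argument that leverages $T_c$-invariance. First, I would set $\tc := \chi$ on $T_c$ and $\tc := \log^*[D(\chi)]$ on $K(l)$, where $D(\chi) = \text{diag}(\exp^*_1(\chi_1), \ldots, \exp^*_1(\chi_n)) \in \cL/\p^l\cL$, using the isomorphisms of Section \ref{sec:characters}, and then extend by the formula $\tc(tk) := \chi(t) \cdot \log^*[D(\chi)](k)$ on $\widetilde{T}(l) = T_c K(l)$. Well-definedness requires consistency on the overlap $T_c \cap K(l) = (1+\p^l)^n$: for a diagonal $t = 1+\text{diag}(y_i)$ with $y_i \in \p^l$, the trace pairing collapses to $\log^*[D(\chi)](t) = \prod_i \theta(\varpi^{-2l}\exp^*_1(\chi_i)y_i) = \prod_i \chi_i(1+y_i) = \chi(t)$, by the defining property of $\exp^*_1$.

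Since $\log^*[D(\chi)]$ is already a character of $K(l)$ (trivial on $K(2l)$ by construction), the only remaining point for multiplicativity of the extended $\tc$ is $T_c$-conjugation invariance of $\log^*[D(\chi)]$. This holds because $D(\chi)$ is diagonal and hence commutes with $t \in T_c$: $\langle D(\chi), tXt^{-1}\rangle = \langle t^{-1}D(\chi)t, X\rangle = \langle D(\chi), X\rangle$. Triviality of $\tc$ on $K(2l)$ is then immediate from the construction.

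For uniqueness, let $\tc'$ be another extension and put $\eta := \tc'\tc^{-1}$, a character of $\widetilde{T}(l)$ trivial on $T_c$. Its restriction $\eta|_{K(l)}$ is trivial on $T_c \cap K(l) = (1+\p^l)^n$ and $T_c$-conjugation invariant (since $T_c$ normalizes $K(l)$ and $\eta$ kills $T_c$). If $\eta|_{K(l)}$ were nontrivial, openness of its kernel would produce some $M \geqslant l$ for which $\eta$ induces a nontrivial character on $K(M)/K(M+1) \simeq \p^M\cL/\p^{M+1}\cL$. Under the trace-pairing duality (the analog of $\exp^*$ at depth $M$), this character is encoded by some $D_M \in \p^{-M-1}\cL/\p^{-M}\cL$; $T_c$-invariance forces $D_M$ to be diagonal, while vanishing of $\eta$ on the diagonal subgroup $(1+\p^M)^n/(1+\p^{M+1})^n$ then forces each diagonal entry of $D_M$ into $\p^{-M}\cO$, giving $D_M = 0$ in the quotient, a contradiction. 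Hence $\eta \equiv 1$, so $\tc' = \tc$.

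The main obstacle is precisely this uniqueness step: an arbitrary extension is not a priori known to be trivial on $K(2l)$, so one cannot simply compare extensions inside the nice quotient $K(l)/K(2l)$ via the $\log^*/\exp^*$ dictionary. The layer-by-layer filtration argument is needed to descend through the congruence filtration and rule out any putative nontrivial piece at each depth, with $T_c$-invariance at each layer forcing the dual-side matrix to be diagonal and the vanishing hypothesis on $T_c \cap K(l)$ then killing the diagonal part.
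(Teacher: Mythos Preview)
Your proof is correct, but both halves take a different route from the paper.  For existence, the paper gives an even more direct formula: writing an arbitrary element of $\widetilde{T}(l)$ as $X+Y$ with $X$ diagonal in $\cO^\times$ and $Y\in\p^l M_n(\cO)$ having zero diagonal, it simply sets $\widetilde{\chi}(X+Y)=\chi(X)$ and checks multiplicativity by hand in two lines.  Your construction via $\log^*[D(\chi)]$ on $K(l)$ and patching along $T_c\cap K(l)$ is the same character, just packaged through the duality machinery; it is slightly longer but has the advantage of making the triviality on $K(2l)$ and the later relation $\exp^*(\widetilde{\chi}|_{K(l)})=D(\chi)$ transparent from the outset.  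For uniqueness, the paper argues structurally: the quotient $\eta=\widetilde{\chi}_1/\widetilde{\chi}_2$ is trivial on $T_c$ and on $[\widetilde{T}(l),\widetilde{T}(l)]$, and one checks that $[\widetilde{T}(l),\widetilde{T}(l)]$ contains every subgroup $I+\p^l E_{ij}$ (as these arise as commutators with $T_c$), so $T_c$ together with the commutator subgroup generates $\widetilde{T}(l)$.  This is shorter and avoids any filtration.  Your depth-by-depth argument through $K(M)/K(M+1)$, using $T_c$-invariance to force the dual element diagonal and then killing the diagonal part via $(1+\p^M)^n\subset T_c$, is a perfectly valid alternative; it is a bit more machinery but is the kind of argument that generalizes well when one does not have such clean control of the commutator subgroup.
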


\begin{proof}

Any element of $\widetilde{T}(l)$ has the form $X + Y$ where $X$ is diagonal with entries in $\cO^\times$, and $Y \in \p^l M_n(\cO)$ has trivial diagonal entries.  With this notation, we define $\widetilde{\chi}(X + Y) = \chi(X)$.  To show $\widetilde{\chi}$ is a character, we have
\[
\widetilde{\chi}( (X_1 + Y_1)(X_2 + Y_2)) = \widetilde{\chi}( X_1X_2 + X_1 Y_2 + Y_1 X_2 + Y_1 Y_2),
\]
and because $X_1 Y_2 + X_2 Y_1$ has trivial diagonal entries and $Y_1 Y_2 \in \p^{2l}M_n(\cO)$, this is equal to $\chi(X_1 X_2) = \widetilde{\chi}(X_1 + Y_1) \widetilde{\chi}(X_2 + Y_2)$ as required.  It is clear that $\widetilde{\chi}$ is trivial on $K(2l)$.

To show uniqueness, suppose there are two such characters $\widetilde{\chi}_1$ and $\widetilde{\chi}_2$.  Their quotient $\widetilde{\chi}_1 / \widetilde{\chi}_2$ is trivial on $T_c$, and on the commutator subgroup $[\widetilde{T}(l), \widetilde{T}(l)]$, so it is enough to show that these two groups generate $\widetilde{T}(l)$.  If $E_{ij}$ is the $(i,j)$th elementary matrix, then $[\widetilde{T}(l), \widetilde{T}(l)]$ contains the subgroup $I + \p^l E_{ij}$, as this is the set of commutators of $I + E_{ij}$ with $T_c$.  It may be seen that $T_c$ and the subgroups $I + \p^l E_{ij}$ generate $\widetilde{T}(l)$, which completes the proof.

\end{proof}

We next define $\chi$-types to be conjugates of $( \widetilde{T}(l), \widetilde{\chi})$ under $K$.

\begin{definition}
\label{chitypedef}

Let $(J, \widetilde{\lambda})$ be a pair consisting of an open subgroup $J < K$ and a character of $J$.  We say $(J, \widetilde{\lambda})$ is a $\chi$-type if there is $k \in K$ such that $J = \widetilde{T}(l)^k$ and $\widetilde{\lambda} = \widetilde{\chi}^k$.  We call $( \widetilde{T}(l), \widetilde{\chi})$ the standard $\chi$-type.

\end{definition}

The restriction of $\widetilde{\chi}$ to $K(l)$ satisfies $\exp^*( \widetilde{\chi}|_{K(l)} ) = D(\chi)$, and hence for any $\chi$-type $(J, \widetilde{\lambda}) = ( \widetilde{T}(l)^k, \widetilde{\chi}^k )$ we have $\exp^*( \widetilde{\lambda}|_{K(l)} ) = D(\chi)^k$.  We may now state the definition of microlocal lift vectors.

\begin{definition}[Microlocal lifts]
\label{liftdef}

We say that $v \in I(\chi)$ is a microlocal lift vector associated to a $\chi$-type $(J, \widetilde{\lambda})$ if $v$ transforms by $\widetilde{\lambda}$ under $J$.

\end{definition}

We simply say that $v$ is a microlocal lift vector if it is a microlocal lift vector associated to some choice of $\chi$-type.  The first result we shall prove about such vectors is that there is a unique microlocal vector up to scaling associated with each $\chi$-type.  Uniqueness requires our assumption that $\chi$ is generic.

\begin{lemma}
\label{microlocalunique}

For each $\chi$-type $(J, \tl)$, the space of microlocal lift vectors in $I(\chi)$ associated with $(J, \tl)$ is one-dimensional.

\end{lemma}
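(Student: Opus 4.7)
The plan is to reduce to the standard $\chi$-type, apply Mackey's theorem in a finite-group quotient, and pin down the contributing double coset via a Lie-algebra argument.

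By Definition \ref{chitypedef}, every $\chi$-type is $K$-conjugate to the standard one $(\widetilde{T}(l), \tc)$, and such conjugation carries microlocal lift vectors to microlocal lift vectors; so I may assume $(J, \tl) = (\widetilde{T}(l), \tc)$ and denote the space in question by $V$. Since $\tc$ is trivial on $K(2l)$, any $f \in V$ is right-$K(2l)$-invariant, and via Iwasawa $G = BK$ it descends to a function on the finite group $G_{2l} := K/K(2l) \cong \mathrm{GL}_n(\cO/\p^{2l})$ that is biequivariant under $\widetilde{B} := B(\cO/\p^{2l})$ on the left with $\chi$ and under $\widetilde{J} := \widetilde{T}(l)/K(2l)$ on the right with $\tc$. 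Mackey's formula for finite groups then gives $\dim V$ as the number of $g \in \widetilde{B}\backslash G_{2l}/\widetilde{J}$ for which $\tc$ and $\chi^g$ (where $\chi^g(x) := \chi(gxg^{-1})$) agree on $\widetilde{J}\cap g^{-1}\widetilde{B}g$.

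To show only the identity coset contributes, I would restrict this compatibility condition to $j_0 \in K(l) \cap g^{-1}B(l)g$ where $B(l) := B(\cO)\cap K(l)$. Writing $j_0 = 1 + g^{-1}Yg$ for $Y \in \p^l\gb(\cO)$ and using $\exp^*(\tc|_{K(l)}) = D(\chi)$ together with $\chi(1+Y) = \theta(\varpi^{-2l}\langle D(\chi), Y\rangle)$ (only the diagonal of $Y$ contributes because $\chi$ is trivial on $U(\cO)$), the cyclicity of the trace turns compatibility into $\langle gD(\chi)g^{-1} - D(\chi),\, Y\rangle \in \p^{2l}$ for all $Y \in \p^l\gb(\cO)$. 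Since the trace-orthogonal complement of $\gb$ is the strictly upper triangular subalgebra $\gn$, this is equivalent to $gD(\chi)g^{-1}$ being upper triangular mod $\p^l$ with the same diagonal as $D(\chi)$.

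Because $D(\chi)$ is regular mod $\p$ by genericity, the eigenspace of an upper triangular matrix with that diagonal for the eigenvalue $D(\chi)_{ii}$ is one-dimensional and lies in $\mathrm{span}(e_1,\ldots,e_i)$ with nonzero $e_i$-coefficient; this analysis passes to $\cO/\p^l$ since the relevant differences of diagonal entries are units. As $ge_i$ is such an eigenvector of $gD(\chi)g^{-1}$, we conclude $g$ is upper triangular mod $\p^l$, so $g \in B(\cO)K(l) \subseteq B(\cO)\widetilde{T}(l)$, the identity double coset. Compatibility at $g = \mathrm{id}$ is checked directly: $\widetilde{T}(l)\cap B(\cO) = T_c \cdot (U(\cO)\cap K(l))$, and both characters restrict to $\chi|_{T_c}$ on $T_c$ and to $1$ on the unipotent part (since $\chi$ vanishes on unipotents and $\langle D(\chi), N\rangle = 0$ for strictly upper triangular $N$). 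This contributes a single one-dimensional summand, giving $\dim V = 1$. The main care is in correctly identifying $\gb^\perp$ under the trace pairing (strictly upper, not strictly lower) and in verifying the eigenvector argument passes from the residue field to $\cO/\p^l$.
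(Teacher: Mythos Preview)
Your proof is correct and follows essentially the same approach as the paper: both reduce to the standard $\chi$-type, work in the compact model of $I(\chi)$ (you via passage to the finite quotient $K/K(2l)$ and Mackey's formula, the paper by direct inspection of the support of $f$), and the key step is identical---compatibility of $\tc$ and $\chi$ on $B_K \cap K(l)$ forces $kD(\chi)k^{-1}$ to be upper triangular with diagonal $D(\chi)$ modulo $\p^l$, which by regularity puts $k$ in $B_K K(l)$. The only cosmetic difference is that the paper concludes the last implication with a centralizer argument (find upper triangular $b$ with $bXb^{-1} = kXk^{-1}$, then $k^{-1}b$ centralizes $X$ so is diagonal), whereas you use the equivalent eigenspace argument.
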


\begin{proof}

We may assume that $(J, \tl)$ is standard.  We work in the compact model of $I(\chi)$, as the space of functions on $K$ satisfying
\be
\label{Btransf}
f(bk) = \chi(b) f(k) \quad \text{for} \quad b \in B_K := B \cap K.
\ee
We wish to show that there is a unique such function up to scalar that satisfies
\be
\label{atransf}
f(k a) = \widetilde{\chi}(a) f(k) \quad \text{for} \quad a \in \widetilde{T}(l).
\ee
For existence, we may define a function $f$ supported on $B_K \widetilde{T}(l)$ by $f(ba) = \chi(b) \widetilde{\chi}(a)$.  This is well defined, because it is clear that the characters $\chi$ of $B_K$ and $\widetilde{\chi}$ of $\widetilde{T}(l)$ agree on $B_K \cap \widetilde{T}(l) = B \cap \widetilde{T}(l)$.

For uniqueness, we wish to show that if $f$ satisfies (\ref{Btransf}) and (\ref{atransf}), and $f(k) \neq 0$, then $k \in B_K \widetilde{T}(l)$.  Property (\ref{atransf}) implies that $f(k_1 k) = \widetilde{\chi}^k(k_1) f(k)$ for $k_1 \in K(l)$, and comparing this with property (\ref{Btransf}) and our assumption $f(k) \neq 0$ we see that $\widetilde{\chi}^k = \chi$ on $B_K \cap K(l)$.  If we let $X = D(\chi)$, then we have $\exp^*( \widetilde{\chi}) = X$ and $\exp^*(\widetilde{\chi}^k) = X^k$ as above.  The condition that $\widetilde{\chi}^k = \chi$ on $B_K \cap K(l)$ implies that $X^k$ is upper triangular, and equal to $X$ on the diagonal.

Let $\overline{k} \in {\rm GL}_n(\cO/\p^l)$ be the mod $\p^l$ reduction of $k$.  Because $X$ is regular there is some upper triangular matrix $b \in {\rm GL}_n(\cO / \p^l)$ such that $b X b^{-1} = k X k^{-1} \in M_n(\cO / \p^l)$.  It follows that $\overline{k}^{-1} b$ centralizes $X$, and because $X$ is regular this means $\overline{k}^{-1} b$ is diagonal.  Therefore $\overline{k}$ is upper triangular, so that $k \in B_K \widetilde{T}(l)$ as required.

\end{proof}

\begin{lemma}
\label{uniquetype}

A nonzero microlocal lift vector $v$ is associated with at most one $\chi$-type.

\end{lemma}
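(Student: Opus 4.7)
The plan is to leverage the principal congruence subgroup $K(l)$, which sits inside every $\chi$-type's underlying group (because $K(l)$ is normal in $K$ and $K(l) \subset \widetilde{T}(l)$), to force any two $\chi$-types associated to $v$ to coincide. Suppose $v \neq 0$ is a microlocal lift vector associated to $\chi$-types $(J_1, \tl_1) = (\widetilde{T}(l)^{k_1}, \tc^{k_1})$ and $(J_2, \tl_2) = (\widetilde{T}(l)^{k_2}, \tc^{k_2})$. Then $K(l) \subset J_1 \cap J_2$, and $v$ transforms under $K(l)$ by both restrictions $\tl_i|_{K(l)}$; since $v$ is nonzero these two characters must agree.

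Next I would translate this equality of characters into an equality of regular elements via $\exp^*$. By the identity $\exp^*(\tl|_{K(l)}) = D(\chi)^k$ recorded just before Definition \ref{liftdef}, we obtain $k_1 D(\chi) k_1^{-1} = k_2 D(\chi) k_2^{-1}$ in $M_n(\cO/\p^l)$. Equivalently, $k := k_1^{-1} k_2$ centralizes $D(\chi)$ modulo $\p^l$.

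The crucial step, which uses the genericity of $\chi$, is to deduce $k \in \widetilde{T}(l)$ from this commutation relation. Since $D(\chi)$ is diagonal with diagonal entries $d_1, \ldots, d_n$ that are pairwise distinct modulo $\p$, the off-diagonal entry $\overline{k}_{ij}$ of the reduction $\overline{k} \in M_n(\cO/\p^l)$ satisfies $(d_i - d_j) \overline{k}_{ij} = 0$ with $d_i - d_j \in (\cO/\p^l)^\times$, forcing $\overline{k}_{ij} = 0$ for $i \neq j$. Hence $\overline{k}$ is diagonal, which places $k$ in $T_c K(l) = \widetilde{T}(l)$. (This is precisely the same regularity input used in the proof of Lemma \ref{microlocalunique}.)

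Finally, writing $k_2 = k_1 k$ with $k \in \widetilde{T}(l)$, the subgroups agree because $\widetilde{T}(l)$ normalizes itself: $J_2 = \widetilde{T}(l)^{k_1 k} = \widetilde{T}(l)^{k_1} = J_1$. For the characters, one has $\tl_2 = \tc^{k_1 k} = (\tc^k)^{k_1}$, and since $\tc$ is a one-dimensional character of $\widetilde{T}(l)$ it is trivial on the commutator subgroup $[\widetilde{T}(l), \widetilde{T}(l)]$; combined with $k \in \widetilde{T}(l)$ this yields $\tc^k = \tc$, and hence $\tl_2 = \tc^{k_1} = \tl_1$. The main obstacle is really the centralizer argument in the third paragraph; the rest is formal manipulation of types under conjugation.
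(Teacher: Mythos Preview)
Your proof is correct and follows essentially the same route as the paper's: both restrict to $K(l)$, apply $\exp^*$ to obtain $D(\chi)^{k_1} = D(\chi)^{k_2}$, use regularity of $D(\chi)$ to force $k_1^{-1}k_2 \in \widetilde{T}(l)$, and conclude that the types coincide. You have simply spelled out in more detail the centralizer computation and the final verification that $(J_1,\tl_1)=(J_2,\tl_2)$, which the paper leaves as a one-line ``It follows that\ldots''.
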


\begin{proof}

Suppose there are two $\chi$-types $(J_1, \tl_1) = ( \widetilde{T}(l)^{k_1}, \tc^{k_1})$ and $(J_2, \tl_2) = ( \widetilde{T}(l)^{k_2}, \tc^{k_2})$ associated with $v$, for $k_1, k_2 \in K$.  It follows that $\tl_1$ and $\tl_2$ have the same restriction to $K(l)$, which means that $D(\chi)^{k_1} = \exp^*( \tl_1 |_{K(l)} ) = \exp^*( \tl_2 |_{K(l)} ) = D(\chi)^{k_2}$.  This implies that $k_1 k_2^{-1}$ commutes with $D(\chi)$, and hence that $k_1 k_2^{-1} \in \widetilde{T}(l)$ because $D(\chi)$ is regular.  It follows that $(J_1, \tl_1) = (J_2, \tl_2)$ as required.

\end{proof}

We may prove the following lemma in the same way.

\begin{lemma}
\label{typestabilizer}

The stabilizer of a $\chi$-type $(J, \tl)$ is equal to $J$.

\end{lemma}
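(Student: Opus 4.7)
The plan is to mimic the proof of Lemma \ref{uniquetype}, using that $D(\chi)$ is regular modulo $\p$. First I would reduce to the standard $\chi$-type: since every $\chi$-type is a $K$-conjugate of $(\widetilde{T}(l), \tc)$ and conjugation intertwines stabilizers, it suffices to show that the stabilizer of $(\widetilde{T}(l), \tc)$ equals $\widetilde{T}(l)$ itself.

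For the easy direction $\widetilde{T}(l) \subseteq \text{Stab}(\widetilde{T}(l), \tc)$, I would note that $\widetilde{T}(l)$ of course normalizes itself, and that since $\tc$ is a character, it is trivial on the commutator subgroup $[\widetilde{T}(l), \widetilde{T}(l)]$; hence inner conjugation by any element of $\widetilde{T}(l)$ fixes $\tc$.

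For the reverse inclusion, I would take $g$ with $\widetilde{T}(l)^g = \widetilde{T}(l)$ and $\tc^g = \tc$, restrict the latter equality to $K(l)$ (which lies in $\widetilde{T}(l)$ and is normal in $K$, hence preserved by conjugation), and apply $\exp^*$ to obtain $D(\chi)^g = D(\chi)$ in $M_n(\cO/\p^l)$. The regularity of $D(\chi)$ mod $\p$, which is exactly the genericity hypothesis on $\chi$, then pins down the centralizer of $D(\chi)$ in ${\rm GL}_n(\cO/\p^l)$ to the diagonal torus; thus the mod $\p^l$ reduction of $g$ is diagonal, and $g \in T_c K(l) = \widetilde{T}(l)$ as required.

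Since this is essentially the centralizer computation already performed in Lemma \ref{uniquetype}, I do not anticipate any serious new obstacle. The only care point is to fix the ambient group in which the stabilizer is computed so that $g$ may be assumed to lie in $K$; this is immediate if the stabilizer is understood as the stabilizer in $K$, which is natural given that $\chi$-types are defined as $K$-conjugates.
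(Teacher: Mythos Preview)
Your proposal is correct and is precisely the argument the paper has in mind: the paper does not write out a proof but simply remarks that Lemma~\ref{typestabilizer} is proved ``in the same way'' as Lemma~\ref{uniquetype}, i.e., by reducing to the standard type, restricting to $K(l)$, applying $\exp^*$, and invoking the regularity of $D(\chi)$ to identify its centralizer in ${\rm GL}_n(\cO/\p^l)$ with the diagonal torus. Your observation that the stabilizer is to be taken in $K$ is also the intended reading.
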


The final result we shall prove about individual microlocal lift vectors is a bound on the support of their matrix coefficients.  This will be used later to compute local integrals in the Ichino--Ikeda formula.  The following lemma is a sharper version of Lemma 3.3 of \cite{Ho}.

\begin{lemma}
\label{coeffsupport}

Let $v \in I(\chi)$ be a microlocal lift vector associated to the $\chi$-type $( \widetilde{T}(l)^k, \tl^k)$, and let $v^\vee \in I(\chi^{-1}) \simeq I(\chi)^\vee$ be a microlocal lift vector associated to the $\chi$-type $( \widetilde{T}(l)^k, (\tl^k)^{-1})$.  The matrix coefficient $\rho(g) = \langle g v, v^\vee \rangle$ is supported on $K(l) T^k K(l)$.

\end{lemma}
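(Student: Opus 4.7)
The plan is to reduce to the standard $\chi$-type, use the $\widetilde{T}(l)$-equivariance of $\rho$ combined with $\exp^*$ to extract a lattice condition on $X = D(\chi)$ from the assumption $\rho(g) \neq 0$, and then deduce the support statement using regularity of $\bar X$ modulo $\p$.

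First I would reduce to the case $k = 1$. Conjugation by $k^{-1}$ identifies microlocal lifts for $(\widetilde{T}(l)^k, \tl^k)$ with the standard microlocal lifts for $(\widetilde{T}(l), \tc)$, and transforms $\rho(g)$ into the standard matrix coefficient evaluated at $k^{-1}gk$; since $k \in K$ normalises $K(l)$, this identifies the target set $K(l) T^k K(l)$ with $K(l) T K(l)$, so one may assume $k = 1$.

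Next, the $\widetilde{T}(l)$-equivariance $\rho(a_1 g a_2) = \tc(a_1)\tc(a_2)\rho(g)$ for $a_1, a_2 \in \widetilde{T}(l)$, applied in two ways to $\rho(hg) = \rho(g \cdot g^{-1}hg)$ for any $h \in \widetilde{T}(l) \cap \widetilde{T}(l)^g$, yields $\tc(h)\rho(g) = \tc(g^{-1}hg)\rho(g)$; nonvanishing of $\rho(g)$ then forces $\tc = \tc^g$ on $\widetilde{T}(l) \cap \widetilde{T}(l)^g$. Restricting this equality to $K(l) \cap g K(l) g^{-1} = 1 + (\p^l \cL \cap g \p^l \cL g^{-1})$ and using $\exp^*$ to identify the two restricted characters with $X$ and $gXg^{-1}$ respectively, the equality becomes $\langle X - gXg^{-1}, Y\rangle \in \varpi^{2l}\cO$ for all $Y \in \p^l\cL \cap g\p^l\cL g^{-1}$; dualising using $(\Lambda_1 \cap \Lambda_2)^* = \Lambda_1^* + \Lambda_2^*$ for the trace pairing yields the lattice condition
\[
gXg^{-1} - X \in \p^l\cL + g\p^l\cL g^{-1}.
\]

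The main step is to deduce $g \in K(l) T K(l)$ from this lattice condition and the regularity of $\bar X$ modulo $\p$. I would write $g = k_1 t k_2$ in Cartan form, with $t = \text{diag}(\varpi^{c_1}, \ldots, \varpi^{c_n}) \in T$, and set $Y = k_2 X k_2^{-1}$, $Z = k_1^{-1} X k_1$. Since $K$ normalises $\p^l\cL$, the lattice condition rewrites as $tY - Zt \in t\p^l\cL + \p^l\cL t$, which entrywise says $\varpi^{c_i} Y_{ij} - \varpi^{c_j} Z_{ij} \in \p^{\min(c_i,c_j)+l}$ for all $i, j$. Reducing modulo $\p^l$, the regularity of $\bar X$ (whose centraliser in ${\rm GL}_n(\cO/\p^l)$ is exactly the diagonal torus) constrains $\bar k_1$ and $\bar k_2$ to lie modulo $K(l)$ in a Borel, with the valuations of their off-diagonal entries tuned precisely to the root exponents $c_i - c_j$ of $t$. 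These are exactly the conditions under which $t$-conjugation carries the unipotent radicals of $\bar k_1$ and $\bar k_2$ into $K(l)$, allowing one to rewrite $g = \kappa_1 t' \kappa_2$ with $\kappa_i \in K(l)$ and $t' \in T$.

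The hardest part is this last step: converting the lattice condition into the multiplicative statement $g \in K(l) T K(l)$. The delicate bookkeeping is the matching of valuations of the off-diagonal entries of $\bar k_i$ with the coroot pairings $c_i - c_j$ of $t$; for $l = 1$ this is a single Hensel-type centraliser argument, while for larger $l$ one iterates the same reasoning to ensure the leading-order analysis persists to full depth. The basic identity making everything work is that for a unipotent $u$ with entries of the appropriate valuation, the conjugate $t u t^{-1}$ lies in $K(l)$, so that $tk$-factors can be reassembled into a product $\kappa_1 t' \kappa_2$ of the desired form.
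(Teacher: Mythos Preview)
Your reduction to the standard type and your derivation of the lattice condition $gXg^{-1} - X \in \p^l\cL + g\,\p^l\cL\,g^{-1}$ are correct and coincide with the paper's argument.  The gap is in the final step.  From the entrywise condition $\varpi^{c_i}Y_{ij} - \varpi^{c_j}Z_{ij} \in \p^{\min(c_i,c_j)+l}$ you assert that $\bar k_1$ and $\bar k_2$ lie in a Borel modulo $K(l)$, but the condition constrains $Y = k_2 X k_2^{-1}$ and $Z = k_1^{-1}Xk_1$, not $k_1,k_2$ themselves.  Knowing, say, that $\bar Y$ is lower triangular only places $\bar k_2$ in some coset $B^- w$ with $w$ a Weyl element, not in a Borel; and when some of the $c_i$ coincide the conditions couple $Y$ and $Z$ rather than constraining either one separately.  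The subsequent claim that $t$-conjugation absorbs the unipotent parts into $K(l)$, and the appeal to an unspecified ``iteration for larger $l$'', are not substantiated.  As written this is a plausibility sketch, not a proof.

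The paper bypasses this bookkeeping by an additive reformulation.  One rewrites $X - X^g \in \p^l\cL + \p^l\cL^g$ as $X + x_1 = (X + x_2)^g$ for some $x_1,x_2 \in \p^l\cL$, and then invokes an elementary approximate-diagonalization lemma: any regular diagonal $Y$ plus a perturbation $y \in \p^m\cL$ is $K(m)$-conjugate to a regular diagonal matrix congruent to $Y$ mod $\p^m$, plus a perturbation in $\p^{m+1}\cL$.  Iterating gives $X + x_i = Y_i^{\kappa_i}$ with $\kappa_i \in K(l)$ and $Y_i$ regular diagonal with $Y_i \equiv X \pmod{\p^l}$.  Since $X$ has distinct diagonal entries mod $\p$, the conjugacy $Y_1^{\kappa_1} = Y_2^{g\kappa_2}$ forces $Y_1 = Y_2$, and then $\kappa_1^{-1}g\kappa_2$ centralizes a regular diagonal matrix over $F$ and hence lies in $T$.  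This route avoids the Cartan decomposition altogether and is considerably cleaner than the one you outline.
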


Note that that the set $K(l) T^k K(l)$ is independent of the choice of $k$ by Lemma \ref{typestabilizer}.

\begin{proof}

It suffices to consider the standard $\chi$-type $( \widetilde{T}(l), \tc)$.  The transformation of $v$ and $v^\vee$ under $K(l)$ implies that for $k \in K(l)$ we have $\rho(kg) = \widetilde{\chi}(k) \rho(g)$ and $\rho(gk) = \widetilde{\chi}(k) \rho(g)$.  If we consider the character $\widetilde{\chi}^g$ of $K(l)^g$, we may write the second transformation property as
\[
\rho(kg) = \widetilde{\chi}^g(k) \rho(g) \quad \text{for} \quad k \in K(l)^g.
\]
If $\rho(g) \neq 0$, comparing this with $\rho(kg) = \widetilde{\chi}(k) \rho(g)$ for $k \in K(l)$ implies that $\widetilde{\chi}$ and $\widetilde{\chi}^g$ must agree on $K(l) \cap K(l)^g$.  We will show that this can only happen when $g \in K(l) T K(l)$.

Let $X \in \cL$ be any lift of $\exp^*(\widetilde{\chi}) = D(\chi)$ to characteristic zero.  We now suppose that $\widetilde{\chi} = \widetilde{\chi}^g$ on $K(l) \cap K(l)^g$.  We first show that this implies $X - X^g \in \p^l \cL + \p^l \cL^g$.  To show this, recall that $\widetilde{\chi}$ is given on $K(l) = 1 + \p^l \cL$ by $\widetilde{\chi}(1 + A) = \theta( \varpi^{-2l} \langle X, A \rangle)$, and likewise $\widetilde{\chi}^g$ is given on $K(l)^g = 1 + \p^l \cL^g$ by $\widetilde{\chi}(1 + A) = \theta( \varpi^{-2l} \langle X^g, A \rangle)$.  We have $K(l) \cap K(l)^g = 1 + \p^l( \cL \cap \cL^g)$, and so for $\widetilde{\chi}$ and $\widetilde{\chi}^g$ to agree we must have $\langle A, X - X^g \rangle \in \p^{2l}$ for $A \in \p^l( \cL \cap \cL^g)$, or equivalently $\langle \cL \cap \cL^g, X - X^g \rangle \in \p^l$.  As the dual lattice to $\cL \cap \cL^g$ is $\cL + \cL^g$, this implies $X - X^g \in \p^l\cL + \p^l\cL^g$ as required.

We may reformulate $X - X^g \in \p^l\cL + \p^l\cL^g$ as saying that there are $x_1, x_2 \in \p^l \cL$ such that $X + x_1 = (X + x_2)^g$.  By iterating Lemma \ref{diagadjust}, we see that $X + x_1$ is equal to $Y_1^{k_1}$ for some $k_1 \in K(l)$ and a regular diagonal matrix $Y_1 \in \cL$ that satisfies $Y_1 \equiv X \; (\p^l)$.  Likewise, we may write $X + x_2 = Y_2^{k_2}$. Combining these, we have $Y_1^{k_1} = Y_2^{g k_2}$, which implies that $Y_1 = Y_2$ and $k_1^{-1} g k_2 \in A$.  This gives $g \in K(l) T K(l)$ as required.

\end{proof}

\begin{lemma}
\label{diagadjust}

Let $Y \in \cL$ be diagonal and regular, and let $y \in \p^l \cL$ for some $l \geqslant 1$.  There is $k \in K(l)$ such that $(Y + y)^k = Z + z$, where $Z$ is diagonal and regular, $Y \equiv Z \; (\p^l)$, and $z \in \p^{l+1} \cL$.

\end{lemma}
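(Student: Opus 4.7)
The plan is to find $k \in K(l)$ of the form $k = 1 + u$ with $u \in \p^l \cL$ having zero diagonal entries, chosen so that conjugating $Y + y$ by $k$ cancels the off-diagonal part of $y$ to first order. The regularity of $Y$ makes this linear equation solvable, and higher-order terms will automatically land in $\p^{l+1} \cL$ because $l \geqslant 1$.

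First, I would expand the conjugation. Writing $k^{-1} = 1 - u + u^2 - \cdots$ (which converges since $u \in \p^l \cL$), a direct computation gives
\[
k(Y+y)k^{-1} = Y + y + [u, Y] + [u, y] + R,
\]
where $R \in \p^{2l} \cL$ collects the terms containing at least two factors of $u$. Since $u \in \p^l \cL$ and $y \in \p^l \cL$, the bracket $[u,y]$ also lies in $\p^{2l} \cL$. Using $2l \geqslant l + 1$, all of these contributions lie in $\p^{l+1} \cL$, so modulo $\p^{l+1}\cL$ we have $k(Y+y)k^{-1} \equiv Y + y + [u, Y]$.

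Next, I would solve for $u$. Since $Y$ is diagonal with entries $Y_1, \ldots, Y_n$, the $(i,j)$ entry of $[u, Y]$ equals $u_{ij}(Y_j - Y_i)$. Regularity of $Y$ means $Y_i - Y_j \in \cO^\times$ for all $i \neq j$, so I can set
\[
u_{ij} = -\frac{y_{ij}}{Y_j - Y_i} \in \p^l \quad \text{for } i \neq j, \qquad u_{ii} = 0.
\]
Then $y + [u, Y]$ has vanishing off-diagonal entries, and its diagonal is just the diagonal of $y$, which lies in $\p^l \cO$ on each entry.

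Finally, I would set $Z = Y + \operatorname{diag}(y)$ and $z = k(Y+y)k^{-1} - Z$. By construction $Z$ is diagonal with $Z \equiv Y \pmod{\p^l}$, and since $Z_i - Z_j = (Y_i - Y_j) + (y_{ii} - y_{jj})$ is a unit plus an element of $\p^l$, it remains a unit, so $Z$ is regular. All contributions to $z$ come from $[u,y]$ and $R$, both of which lie in $\p^{2l}\cL \subset \p^{l+1}\cL$, which is the desired bound.

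There is no real obstacle here: the argument is a one-step Newton / Hensel-type correction. The only point that must be checked carefully is the bookkeeping that all error terms lie in $\p^{2l}\cL$ and hence in $\p^{l+1}\cL$, which uses nothing beyond the hypothesis $l \geqslant 1$ and the regularity of $Y$ to invert $Y_j - Y_i$.
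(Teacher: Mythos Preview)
Your argument is correct and essentially identical to the paper's: both conjugate by $k = 1 + u$ with $u \in \p^l\cL$ and use regularity of $Y$ to solve the linear equation $[u,Y] \equiv -(\text{off-diagonal of }y)$, with the higher-order terms automatically landing in $\p^{l+1}\cL$. Your version is slightly more explicit (writing out $u_{ij} = -y_{ij}/(Y_j - Y_i)$ and noting the error is actually in $\p^{2l}\cL$), but the idea is the same.
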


\begin{proof}

Let $A \in \cL$, so that $1 + \varpi^l A \in K(l)$.  Because $K(l) / K(2l)$ is abelian, we have $(1 + \varpi^l A)^{-1} \equiv 1 - \varpi^l A \; ( \p^{l+1} )$.  A calculation gives
\[
(1 + \varpi^l A)(Y + y)(1 + \varpi^l A)^{-1} \equiv Y + y + \varpi^l[A, Y] \; (\p^{l+1}).
\]
Because $Y$ is regular, $[A, Y]$ can be made equal to any matrix in $\cL$ with zeros on the diagonal if $A$ is chosen correctly.  We may therefore choose $A$ so that the off-diagonal entries of $Y + y + \varpi^l[A, Y]$ all lie in $\p^{l+1}$, which means that it may be written in the form $Z + z$ as in the statement of the lemma.

\end{proof}

\subsection{Compatible pairs}
\label{sec:compatpairs}

We now define the notion of compatible pairs of $\chi$-types, and microlocal lift vectors, for a GGP pair of general linear groups.  We let $G = {\rm GL}_{n+1}(F)$ and $H = {\rm GL}_n(F)$, and consider $H$ to be embedded in the upper left corner of $G$.  We continue to use the notation of the previous sections for $G$, and indicate the corresponding objects for $H$ by adding a subscript.  We fix a generic pair $(\chi, \chi_H)$ of conductor $\p^{2l}$.

\begin{definition}
\label{compatdef}

We say that a $\chi$-type $(J, \tl)$ and a $\chi_H$-type $(J_H, \tl_H)$ are compatible if $\tl$ and $\tl_H$ agree on $K_H(l)$.  We say that nonzero microlocal lift vectors $v \in I(\chi)$ and $v_H \in I(\chi_H)$ are compatible if their types are, or equivalently, if they transform by the same character under $K_H(l)$.

\end{definition}

For simplicity, we will refer to $(J, \tl)$ and $(J_H, \tl_H)$ as compatible $(\chi, \chi_H)$-types.  If $\pi : \cL / \p^l \cL \to \cL_H / \p^l \cL_H$ is the natural projection map, compatibility of $(J, \tl)$ and $(J_H, \tl_H)$ is equivalent to $\pi( \exp^*(\widetilde{\lambda})) = \exp^*_H( \widetilde{\lambda}_H)$.  It is clear that $K_H$ acts by conjugation on the set of compatible $(\chi, \chi_H)$-types.

\begin{prop}
\label{compat}

If $(J_H, \tl_H)$ is a $\chi_H$-type, there exists a $\chi$-type $(J, \tl)$ compatible with it, and $J_H$ acts transitively on the set of such $\chi$-types.

\end{prop}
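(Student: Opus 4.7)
The plan is to reduce to the case where $(J_H, \tl_H)$ is the standard $\chi_H$-type, translate compatibility into a concrete condition on matrices in $M_{n+1}(\cO/\p^l)$ via $\exp^*$, construct a compatible $\chi$-type by a Schur-complement calculation, and prove transitivity by an explicit diagonal conjugation.

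First, by conjugating the entire situation by the element $k_H \in K_H$ for which $(J_H, \tl_H) = (\widetilde{T}_H(l)^{k_H}, \tc_H^{k_H})$, I may assume $(J_H, \tl_H) = (\widetilde{T}_H(l), \tc_H)$. Arguing as in the proofs of Lemmas \ref{uniquetype} and \ref{typestabilizer}, the map $k\widetilde{T}(l) \mapsto \overline{M} := D(\chi)^k \pmod{\p^l}$ is a bijection from $K/\widetilde{T}(l)$ onto the set of matrices in $M_{n+1}(\cO/\p^l)$ that are $\text{GL}_{n+1}(\cO/\p^l)$-conjugate to $D(\chi)$, and the $J_H$-action on $\chi$-types corresponds to left conjugation on matrices. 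Under this bijection, compatibility of a $\chi$-type $(\widetilde{T}(l)^k, \tc^k)$ with $(\widetilde{T}_H(l), \tc_H)$ is exactly the condition that the upper-left $n \times n$ block of $\overline{M}$ equals $D(\chi_H)$.

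For existence I will write $\overline{M} = \begin{pmatrix} D(\chi_H) & v \\ w^T & c \end{pmatrix}$ and impose $\det(XI - \overline{M}) = \prod_k(X - \alpha_k)$, where $\alpha_k$ are the diagonal entries of $D(\chi)$ and $\beta_i$ those of $D(\chi_H)$. The Schur-complement identity gives
\[
\det(XI - \overline{M}) = (X - c)\prod_i(X - \beta_i) - \sum_i v_i w_i \prod_{j \neq i}(X - \beta_j).
\]
Matching the $X^n$ coefficient forces $c = \sum_k \alpha_k - \sum_i \beta_i$, and evaluating the remaining identity (of degree $\leqslant n-1$) at $X = \beta_i$ yields
\[
v_i w_i = -\frac{\prod_k(\beta_i - \alpha_k)}{\prod_{j \neq i}(\beta_i - \beta_j)} =: u_i.
\]
The genericity of $(\chi, \chi_H)$ is exactly the input needed to make each $u_i$ a unit in $\cO/\p^l$, so taking $v_i = 1$, $w_i = u_i$, together with the above $c$, produces a valid $\overline{M}$. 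Since $\overline{M}$ has the same characteristic polynomial as $D(\chi)$ and distinct eigenvalues modulo $\p$, it is $\text{GL}_{n+1}(\cO/\p^l)$-conjugate to $D(\chi)$, and lifting the conjugator to $K$ produces the desired compatible $\chi$-type.

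For transitivity, let $\overline{M_1}, \overline{M_2}$ be the matrices associated to two compatible $\chi$-types, written as $\overline{M_s} = \begin{pmatrix} D(\chi_H) & v^{(s)} \\ (w^{(s)})^T & c \end{pmatrix}$. The relations $v^{(s)}_j w^{(s)}_j = u_j$ and the unitality of $u_j$ force every $v^{(s)}_j$ to be a unit. Setting $t_j := v^{(2)}_j / v^{(1)}_j \in (\cO/\p^l)^\times$ and taking $h$ to be any lift of $\text{diag}(t_1, \ldots, t_n)$ to $T_{H,c} \subset \widetilde{T}_H(l)$, a direct block computation shows that conjugation by the image $\text{diag}(t_1, \ldots, t_n, 1)$ of $h$ in $G$ fixes $D(\chi_H)$ and $c$, rescales $v \mapsto (t_j v_j)$ and $w \mapsto (w_j / t_j)$, and hence carries $\overline{M_1}$ to $\overline{M_2}$. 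Via the bijection this gives $(J_1, \tl_1)^h = (J_2, \tl_2)$. The main technical point is the Schur-complement computation together with the verification that every quantity it produces is a unit, which is precisely what the generic-pair hypothesis was designed to supply; everything else (the bijection with matrices, the lift from $\cO/\p^l$ to $\cO$, and the description of $\widetilde{T}_H(l)$ modulo $\p^l$) is routine.
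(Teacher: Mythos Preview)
Your argument is correct and takes a genuinely different route from the paper. Both proofs reduce to the standard $\chi_H$-type and translate compatibility into the condition that $D(\chi)^k$ has upper-left block $D(\chi_H)$ in $M_{n+1}(\cO/\p^l)$. From there the paper parametrizes the \emph{conjugating elements} $k$: its Lemma \ref{adjproj} shows, via a Cauchy-matrix computation and a result of Fan--Pall, that the set of such $k$ is exactly $A_H g_0 A$ for an explicit $g_0$, from which existence and $\widetilde{T}_H(l)$-transitivity are immediate. You instead parametrize the \emph{conjugated matrices} $\overline{M}$: the Schur-complement identity pins down $c$ and the products $v_iw_i=u_i$, and the residual diagonal scaling $v_i\mapsto t_iv_i$, $w_i\mapsto w_i/t_i$ is exactly the $T_{H,c}$-action. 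The two computations are in fact dual (your $u_i$ coincides with the paper's formula for $y_i$), but your packaging is cleaner and more self-contained for this proposition alone.

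One small point you pass over: the assertion that a matrix in $M_{n+1}(\cO/\p^l)$ with characteristic polynomial $\prod_k(X-\alpha_k)$, $\alpha_k$ distinct mod $\p$, is $\mathrm{GL}_{n+1}(\cO/\p^l)$-conjugate to the diagonal matrix is not quite immediate over the non-reduced ring $\cO/\p^l$; it follows, for instance, by diagonalizing mod $\p$ and then iterating the paper's Lemma \ref{diagadjust}, or by a Hensel argument on eigenvectors. More substantively, the paper's approach has a downstream payoff yours does not: the explicit Cauchy form of $g_0$ is what feeds into Lemma \ref{g0entry} (all entries of $g_0$ and $g_0^{-1}$ are units) and thence into the transversality Lemma \ref{Atrans}, which is essential for the period bounds. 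Your proof of Proposition \ref{compat} does not produce this $g_0$ directly, so if you were rewriting the paper you would still need to recover that information separately.
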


It follows from this that $K_H$ acts transitively on the set of compatible $(\chi, \chi_H)$-types.

\begin{proof}

Because $K_H$ acts transitively on $\chi_H$-types, for existence, it suffices to prove that there exists a $\chi$-type compatible with $(\widetilde{T}_H(l), \tc_H)$, and for transitivity, it suffices to show that $\widetilde{T}_H(l)$ acts transitively on the set of such $\chi$-types.

Consider a $\chi$-type $(J, \tl) = (\widetilde{T}(l)^k, \tl^k)$ for some $k \in K$.  Because $\exp^*(\tl) = D(\chi)^k$, $(J, \tl)$ is compatible with $(\widetilde{T}_H(l), \tc_H)$ if and only if $\pi( D(\chi)^k) = D(\chi_H)$.  To explicate this, we write $D(\chi) = \text{diag}( \alpha_1, \ldots, \alpha_{n+1})$ and $D(\chi_H) = \text{diag}(\beta_1, \ldots, \beta_n)$, where the $\alpha_i$ and $\beta_j$ are mutually distinct mod $\p$ by the comments of Section \ref{sec:characters}.  Compatibility is then equivalent to
\be
k \left( \begin{array}{ccc} \alpha_1 && \\ & \ddots & \\ && \alpha_{n+1} \end{array} \right) k^{-1} =
\left( \begin{array}{cccc}
\beta_1 &&& x_1 \\
& \ddots && \vdots \\
&& \beta_n & x_n \\
y_1 & \ldots & y_n & z
\end{array} \right)
\ee
for some $x_i, y_i, z \in \cO / \p^l$.  Lemma \ref{adjproj} says that there exists a $k_0$ with this property, and moreover that $k \in K$ has this property if and only if $k \in \widetilde{T}_H(l) k_0 \widetilde{T}(l)$.  This implies the proposition.

\end{proof}

\begin{lemma}
\label{adjproj}

Let $A$ be the diagonal subgroup of ${\rm GL}_{n+1}(\cO / \p^l)$, and let $A_H \subset A$ be the subgroup of elements whose last entry is equal to 1.  Let $\alpha_i \in \cO/\p^l$, $1 \leqslant i \leqslant n+1$, and $\beta_j \in \cO/\p^l$, $1 \leqslant j \leqslant n$, have mutually distinct reductions modulo $\p$.  Then $g_0 \in {\rm GL}_{n+1}(\cO / \p^l)$ has the property that
\be
\label{conjugate}
g_0 \left( \begin{array}{ccc} \alpha_1 && \\ & \ddots & \\ && \alpha_{n+1} \end{array} \right) g_0^{-1} =
\left( \begin{array}{cccc}
\beta_1 &&& x_1 \\
& \ddots && \vdots \\
&& \beta_n & x_n \\
y_1 & \ldots & y_n & z
\end{array} \right)
\ee
for some $x_i, y_i, z \in \cO / \p^l$ if and only if
\be
\label{g0}
g_0 \in A_H
\left( \begin{array}{ccc}
& \left( \frac{1}{ \alpha_j - \beta_i } \right)_{ij} \\
1 & \ldots & 1
\end{array} \right)
A.
\ee

\end{lemma}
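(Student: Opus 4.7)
The plan is to parametrize the set of $g_0 \in {\rm GL}_{n+1}(\cO / \p^l)$ satisfying (\ref{conjugate}) by directly solving the equation $g_0 D = M g_0$ entry by entry, where $D = \mathrm{diag}(\alpha_1, \ldots, \alpha_{n+1})$ and $M$ denotes the right-hand side of (\ref{conjugate}), and then to identify the resulting parametrization with the double coset $A_H L A$, where $L$ is the specific matrix in the statement of the lemma.

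For the direction $(\ref{conjugate}) \Rightarrow (\ref{g0})$, writing $g_0 = (g_{ij})$ and comparing $(i, j)$-entries for $i \leqslant n$ yields $g_{ij} \alpha_j = \beta_i g_{ij} + x_i g_{n+1, j}$. Since each $\alpha_j - \beta_i$ is a unit modulo $\p$ by the distinctness hypothesis, this rearranges to
\[
g_{ij} = \frac{x_i}{\alpha_j - \beta_i}\, g_{n+1, j} \quad \text{for all } i \leqslant n \text{ and all } j;
\]
setting $x_{n+1} := 1$, the formula holds trivially for $i = n+1$ as well. Hence $g_0 = X L Y$ with $X = \mathrm{diag}(x_1, \ldots, x_n, 1)$ and $Y = \mathrm{diag}(g_{n+1, 1}, \ldots, g_{n+1, n+1})$. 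Once $L$ is shown to be invertible (see below), the invertibility of $g_0$ forces each $x_i$ and $g_{n+1, j}$ to be a unit, placing $X \in A_H$ and $Y \in A$.

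The key technical step is the invertibility of $L$. Suppose $\sum_j c_j L_{*, j} = 0$; the top $n$ rows give $\sum_j c_j/(\alpha_j - \beta_i) = 0$ for each $i$, while the bottom row gives $\sum_j c_j = 0$. Clearing denominators in the first family of relations shows that the polynomial $P(\beta) = \sum_j c_j \prod_{k \neq j}(\alpha_k - \beta)$ vanishes at $\beta = \beta_1, \ldots, \beta_n$. Its leading coefficient in $\beta$ is $(-1)^n \sum_j c_j = 0$, so $\deg P \leqslant n - 1$; a polynomial of degree at most $n - 1$ with $n$ roots whose pairwise differences are units must vanish identically. Evaluating $P$ at $\beta = \alpha_m$ then gives $c_m \prod_{k \neq m}(\alpha_k - \alpha_m) = 0$, and since the $\alpha_k$ are distinct modulo $\p$ we conclude $c_m = 0$. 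Hence $L$ is invertible over $\cO/\p^l$.

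For the converse $(\ref{g0}) \Rightarrow (\ref{conjugate})$, if $g_0 = X L Y$ with $X \in A_H$ and $Y \in A$, then $Y$ is diagonal and commutes with $D$, giving $g_0 D g_0^{-1} = X (L D L^{-1}) X^{-1}$. A direct check of $L D = M_0 L$ shows that $M_0 := L D L^{-1}$ has the form of (\ref{conjugate}) with $x_i^0 = 1$ and with $y_k^0, z^0$ the unique solution of the linear system $\sum_k y_k^0/(\alpha_j - \beta_k) + z^0 = \alpha_j$, $j = 1, \ldots, n+1$, which is solvable precisely because $L$ is invertible. Finally, conjugation by $X = \mathrm{diag}(x_1, \ldots, x_n, 1) \in A_H$ preserves the shape of $M_0$, fixing the diagonal block while merely rescaling the off-diagonal entries in the last row and column. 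This completes both directions, with the main obstacle being the invertibility of $L$, handled by the polynomial argument above.
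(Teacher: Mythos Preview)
Your proof is correct and follows the same overall structure as the paper's: for the forward direction you solve $g_0 D = M g_0$ entry by entry to obtain the factorization $g_0 = X L Y$, and for the reverse direction you verify directly that $L D L^{-1}$ has the required shape and that this shape is preserved under conjugation by $A_H$.

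The one point of genuine difference is how the invertibility of $L$ (equivalently, the solvability of the linear system for $(y^0_1,\ldots,y^0_n,z^0)$) is established. The paper quotes an explicit determinant identity of Fan--Pall \cite{FP}, which gives
\[
\det L = (-1)^{n(n-1)/2}\,\frac{\prod_{i<j}(\alpha_i-\alpha_j)\prod_{i<j}(\beta_i-\beta_j)}{\prod_{i,j}(\alpha_i-\beta_j)}
\]
and hence invertibility, together with closed formulas for the $y_i$ and $z$. Your polynomial argument (clearing denominators, observing that the leading coefficient of the resulting degree-$n$ polynomial vanishes by the last row relation, and then using that a polynomial of degree $\leqslant n-1$ over $\cO/\p^l$ with $n$ roots whose pairwise differences are units must vanish) is a clean self-contained substitute. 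It avoids the external reference, at the cost of not producing the explicit formulas for $y_i$ and $z$; but those formulas are merely recorded in the paper and not used in the proof of the lemma itself, so nothing is lost for the purposes of Lemma~\ref{adjproj}.
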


\begin{proof}

We first show that (\ref{conjugate}) implies (\ref{g0}).  If we write $\alpha' = \text{diag}(\alpha_1, \ldots, \alpha_n)$ and $\beta = \text{diag}(\beta_1, \ldots, \beta_n)$, we may write the equation (\ref{conjugate}) as
\[
g_0 \left( \begin{array}{cc} \alpha' & \\ & \alpha_{n+1} \end{array} \right) = \left( \begin{array}{cc} \beta & x \\ {}^t y & z \end{array} \right) g_0.
\]
If we write $g_0$ in the form
\[
g_0 = \left( \begin{array}{cc} A & b_1 \\ {}^t b_2 & c \end{array} \right),
\]
where $A \in M_n(\cO / \p^l)$ and $b_1$ and $b_2$ are column vectors, then this becomes
\be
\label{conjugate1}
\left( \begin{array}{cc} A \alpha' & \alpha_{n+1} b_1 \\ {}^t b_2 \alpha' & c \alpha_{n+1} \end{array} \right) = \left( \begin{array}{cc} \beta A + x {}^t b_2 & \beta b_1 + cx \\ {}^t y A + z {}^t b_2  & {}^t y b_1 + cz \end{array} \right).
\ee
The top left entry gives
\begin{align*}
A_{ij} \alpha_j & = \beta_i A_{ij} + x_i b_{2,j} \\
A_{ij} & = \frac{ x_i b_{2,j} }{ \alpha_j - \beta_i}
\end{align*}
for all $i, j$, and the top right entry gives
\begin{align*}
\alpha_{n+1} b_{1,i} & = \beta_i b_{1,i} + c x_i \\
b_{1,i} & = \frac{c x_i}{\alpha_{n+1} - \beta_i }
\end{align*}
for all $i$.  Combining these gives that any $g_0$ satisfying (\ref{conjugate}) must have the form
\[
g_0 = 
\left( \begin{array}{cccc}
x_1 &&& \\
& \ddots && \\
&& x_n & \\
&&& 1
\end{array} \right)
\left( \begin{array}{ccc}
& \left( \frac{1}{ \alpha_j - \beta_i } \right)_{ij} \\
1 & \ldots & 1
\end{array} \right)
\left( \begin{array}{cccc}
b_{2,1} &&& \\
& \ddots && \\
&& b_{2,n} & \\
&&& c
\end{array} \right),
\]
which is equivalent to (\ref{g0}) because all the matrices on the right hand side must be invertible.

We next show that (\ref{g0}) implies (\ref{conjugate}).  Because the set of $g_0$ satisfying (\ref{conjugate}) is bi-invariant under $A_H$ and $A$, it suffices to check the case when
\[
g_0 =
\left( \begin{array}{ccc}
& \left( \frac{1}{ \alpha_j - \beta_i } \right)_{ij} \\
1 & \ldots & 1
\end{array} \right).
\]
We shall show that there exists $y_i$ and $z$ such that
\be
\label{conjugate2}
g_0 \left( \begin{array}{cc} \alpha' & \\ & \alpha_{n+1} \end{array} \right) = \left( \begin{array}{cc} \beta & 1_n \\ {}^t y & z \end{array} \right) g_0,
\ee
where $1_n$ denotes the column vector of 1's.  If we write $g_0$ as
\[
g_0 = \left( \begin{array}{cc} A & b_1 \\ {}^t 1_n & 1 \end{array} \right),
\]
then (\ref{conjugate2}) becomes
\[
\left( \begin{array}{cc} A \alpha' & \alpha_{n+1} b_1 \\ {}^t 1_n \alpha' & \alpha_{n+1} \end{array} \right) = \left( \begin{array}{cc} \beta A + I_n & \beta b_1 + 1_n \\ {}^t y A + z {}^t 1_n  & {}^t y b_1 + z \end{array} \right).
\]
As before, the top left and right entries of this equation are satisfied, and it remains to consider the two equations ${}^t 1_n \alpha' = {}^t y A + z {}^t 1_n$ and $\alpha_{n+1} = {}^t y b_1 + z$.  Writing these out, we obtain the system of linear equations
\be
\label{FanPall}
\alpha_j = z + \sum_{i = 1}^n y_i \frac{1}{\alpha_j - \beta_i}
\ee
for $1 \leqslant j \leqslant n+1$.  This system is considered in the Lemma on p. 300 of \cite{FP}.  It is proved that the determinant\footnote{The authors in \cite{FP} work over the reals, but they prove an identity of polynomials that is valid over any ring.} of this system is equal to
\[
(-1)^{n(n-1)/2} \frac{ \prod_{1 \leqslant i < j \leqslant n+1} (\alpha_i - \alpha_j) \prod_{1 \leqslant i < j \leqslant n} (\beta_i - \beta_j) }{ \prod_{1 \leqslant i \leqslant n+1} \prod_{1 \leqslant j \leqslant n} (\alpha_i - \beta_j) },
\]
which by our assumptions on $\alpha_i$ and $\beta_j$ is invertible.  It follows that there are unique $z$ and $y_i$ that satisfy (\ref{FanPall}), and hence (\ref{conjugate2}), as required.  Moreover, Fan and Pall prove that
\[
y_i = - \frac{ \prod_{j = 1}^{n+1} (\alpha_j - \beta_i) }{ \prod_{j \neq i} ( \beta_j - \beta_i) },
\]
and taking traces gives that $z = \sum \alpha_i - \sum \beta_i$.

\end{proof}

We shall need the following additional property of the element $g_0$.

\begin{lemma}
\label{g0entry}

If $g_0$ is as in Lemma \ref{adjproj}, then all entries of $g_0$ and $g_0^{-1}$ are in $(\cO / \p^l)^\times$.

\end{lemma}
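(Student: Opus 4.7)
The plan is to prove the statement via the cofactor formula $(g_0^{-1})_{j,i} = (-1)^{i+j} M_{ij}/\det(g_0)$, where $M_{ij}$ denotes the $(i,j)$ minor of $g_0$, and verify that both $\det(g_0)$ and every $M_{ij}$ are units in $\cO/\p^l$.

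First, each entry of $g_0$ is either $1$ or $(\alpha_j - \beta_i)^{-1}$. Since the $\alpha$'s and $\beta$'s have pairwise distinct reductions modulo $\p$, every $\alpha_j - \beta_i$ is a unit in $\cO/\p^l$, and so is every entry of $g_0$.

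To compute $\det(g_0)$, I would introduce an auxiliary parameter $\gamma$ and consider the genuine $(n+1) \times (n+1)$ Cauchy matrix $\widehat{g}_0(\gamma)$ whose $(i,j)$ entry is $(\alpha_j - \beta_i)^{-1}$ for $i \leqslant n$ and $(\alpha_j - \gamma)^{-1}$ for $i = n+1$; rescaling its last row by $-\gamma$ and letting $\gamma \to \infty$ recovers $g_0$. The Cauchy determinant formula, after passing to this limit, yields
\[
\det(g_0) = \frac{\prod_{1 \leqslant i < j \leqslant n}(\beta_i - \beta_j)\,\prod_{1 \leqslant i < j \leqslant n+1}(\alpha_j - \alpha_i)}{\prod_{i=1}^n \prod_{j=1}^{n+1}(\alpha_j - \beta_i)},
\]
and every factor appearing is a unit modulo $\p$, so $\det(g_0) \in (\cO/\p^l)^\times$. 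The minors split into two cases. When $i = n+1$, $M_{n+1, j}$ is a genuine Cauchy determinant on the $\alpha_{j'}$ with $j' \neq j$ and on the $\beta_{i'}$. When $i \leqslant n$, the submatrix obtained from $g_0$ by deleting row $i$ and column $j$ still contains the bottom row of $1$'s, so it has the same ``Cauchy plus row of ones'' structure at size $n \times n$, and the identical limiting argument applies. In either case $M_{ij}$ is expressed as a ratio of products of differences among $\{\alpha_{j'} : j' \neq j\}$ and $\{\beta_{i'} : i' \neq i\}$, each of which is a unit.

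Combining these observations, both $\det(g_0)$ and every $M_{ij}$ lie in $(\cO/\p^l)^\times$, so the cofactor formula gives the claim for $g_0^{-1}$ as well. The only real obstacle is the routine bookkeeping in the degenerate Cauchy determinant computation and verifying that the analogous limit argument for the $n \times n$ minors produces the expected factored form; no new ideas are required beyond those already used in the proof of Lemma \ref{adjproj}.
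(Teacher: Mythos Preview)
Your approach is correct but differs from the paper's. You compute $\det(g_0)$ and all its minors via Cauchy determinant identities (using the ``add a parameter $\gamma$ and let $\gamma\to\infty$'' trick), then invoke the cofactor formula. The paper instead reuses the conjugation relation that defines $g_0$: writing the equation $\alpha\, g_0^{-1} = g_0^{-1}\begin{pmatrix}\beta & x\\ {}^t y & z\end{pmatrix}$ in block form yields $A'_{ij} = b'_{1i} y_j/(\alpha_i-\beta_j)$ and $b'_{2j} = c' y_j/(\alpha_{n+1}-\beta_j)$, so $g_0^{-1}$ factors as $\text{diag}(b'_{11},\dots,b'_{1n},c')\cdot M\cdot\text{diag}(y_1,\dots,y_n,1)$, where $M$ has the same Cauchy-plus-ones shape as $g_0$. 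The $y_j$ are already known to be units from Lemma~\ref{adjproj}, the entries of $M$ are manifestly units, and invertibility of $g_0^{-1}$ forces the left diagonal to be a unit as well. This avoids any explicit determinant computation by exploiting the same structural argument that produced the factorization of $g_0$ itself.

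One small caveat on your write-up: the limiting argument ``$\gamma\to\infty$'' is not literally available over $\cO/\p^l$. What you are really using is that the resulting determinant identity is an identity of rational functions with integer coefficients, hence valid in any ring where the relevant differences are invertible; it would be worth saying this explicitly. Alternatively, you could simply cite the Fan--Pall determinant already quoted in the proof of Lemma~\ref{adjproj}, which is (up to transposition) exactly the formula you need for $\det(g_0)$.
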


\begin{proof}

The statement for $g_0$ follows immediately from (\ref{g0}).  For $g_0^{-1}$, we note that it satisfies the equation
\[
\left( \begin{array}{cc} \alpha' & \\ & \alpha_{n+1} \end{array} \right) g_0^{-1} = g_0^{-1} \left( \begin{array}{cc} \beta & x \\ {}^t y & z \end{array} \right).
\]
If we write
\[
g_0^{-1} = \left( \begin{array}{cc} A' & b'_1 \\ {}^t b'_2 & c' \end{array} \right),
\]
then we see as in Lemma \ref{adjproj} that
\[
A'_{ij} = \frac{ b'_{1i} y_j}{ \alpha_i - \beta_j}, \quad b'_{2j} = \frac{c' y_j}{ \alpha_{n+1} - \beta_j}.
\]
It follows that
\[
g_0^{-1} = 
\left( \begin{array}{ccccc}
b'_{11} &&& \\
& \ddots && \\
&& b'_{1n} & \\
&&& c' 
\end{array} \right)
\left( \begin{array}{ccc}
& & 1 \\
& \left( \frac{1}{ \alpha_i - \beta_j } \right)_{ij} & \vdots \\
& & 1
\end{array} \right)
\left( \begin{array}{cccc}
y_1 &&& \\
& \ddots && \\
&& y_n & \\
&&& 1
\end{array} \right),
\]
which implies the result.

\end{proof}

\begin{lemma}
\label{Atrans}

Let $(J, \tl)$ and $(J_H, \tl_H)$ be compatible $(\chi, \chi_H)$-types.  Let $v \in I(\chi)$ be a microlocal lift vector associated to $(J, \tl)$, and let $v^\vee \in I(\chi^{-1})$ be a microlocal lift vector associated to $(J, \tl^{-1})$.  Then the intersection of $H$ with the support of the matrix coefficient $\rho(g) = \langle gv, v^\vee \rangle$ is equal to $K_H(l)$.  Moreover, we have $J \cap H = K_H(l)$.

\end{lemma}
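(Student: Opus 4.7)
The plan is to show that $K(l) T^{k_0} K(l) \cap H = K_H(l)$; both claims of the lemma will follow. Indeed, $J = \widetilde{T}(l)^{k_0} = k_0 T_c k_0^{-1} K(l) \subseteq K(l) T^{k_0} K(l)$ (since $K(l)$ is normal in $K$), while $\text{supp}(\rho) \subseteq K(l) T^{k_0} K(l)$ by Lemma \ref{coeffsupport}, so each intersection-with-$H$ is contained in $K(l) T^{k_0} K(l) \cap H$. The reverse inclusion $K_H(l) \subseteq J \cap H$ is immediate, and $K_H(l) \subseteq \text{supp}(\rho)$ follows from $\rho(k_H) = \tl(k_H) \rho(e)$ for $k_H \in K_H(l) \subset J$ together with $\rho(e) = \langle v, v^\vee \rangle > 0$, which is an easy computation in the compact model (the supports of $v$ and $v^\vee$ coincide and the pairing integrand is identically $1$ there). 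By Proposition \ref{compat} and the $K_H$-equivariance of the assertion, we may reduce to the case $(J, \tl) = (\widetilde{T}(l)^{k_0}, \tc^{k_0})$ and $(J_H, \tl_H) = (\widetilde{T}_H(l), \tc_H)$, with $k_0$ the specific element of Lemma \ref{adjproj}.

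So take $h = u_1 k_0 t k_0^{-1} u_2 \in H$ with $u_i \in K(l)$ and $t = \text{diag}(t_1, \ldots, t_{n+1}) \in T$, set $M = k_0 t k_0^{-1}$ and $A_i = u_i - I \in \p^l \cL$, and expand $h = M + A_1 M + M A_2 + A_1 M A_2$. By Lemma \ref{g0entry} every entry of $k_0$ and $k_0^{-1}$ is a unit, so $v(M_{a,b}) \geq \mu_{\min} := \min_m v(t_m)$, and each of the three correction terms has valuation at least $\mu_{\min} + l$. The conditions $h_{n+1, j} = 0$ for $j \leq n$ and $h_{n+1, n+1} = 1$ therefore force
\[
M_{n+1, \cdot} \equiv (0, \ldots, 0, 1) \pmod{\p^{\mu_{\min} + l}}.
\]
By Lemma \ref{adjproj} the last row of $k_0$ is $(1, 1, \ldots, 1)$, so $M_{n+1, \cdot} = (t_1, \ldots, t_{n+1}) k_0^{-1}$; right-multiplying by $k_0 \in K$ preserves entrywise congruences and gives
\[
(t_1, \ldots, t_{n+1}) \equiv (1, 1, \ldots, 1) \pmod{\p^{\mu_{\min} + l}}.
\]

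A short case analysis rules out $\mu \neq 0$: if $\mu_{\min} < 0$, pick $m$ with $\mu_m = \mu_{\min}$, whence $v(t_m - 1) = \mu_{\min}$ contradicts the congruence $\mu_{\min} \geq \mu_{\min} + l$; if $\mu_{\min} = 0$ but some $\mu_{m'} > 0$, then $v(t_{m'} - 1) = 0$ contradicts $0 \geq l$. Hence $\mu = 0$, $t \in T_c$, and $M \in K$; reducing modulo $\p^l$ gives $\bar h = \bar{k_0} \bar t \bar{k_0}^{-1}$ in ${\rm GL}_{n+1}(\cO/\p^l)$, and repeating the last-row argument there forces $\bar t = I$, i.e., $t \in T_c \cap K(l)$. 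Then $M \in K(l)$ and $h \in K(l) \cap H = K_H(l)$, as required. The main technical obstacle is the valuation bookkeeping in the error decomposition of $h$, which is made clean by combining the unit-entries property of $k_0^{\pm 1}$ (Lemma \ref{g0entry}) with the last-row structure of $k_0$ (Lemma \ref{adjproj}) to pin $(t_1, \ldots, t_{n+1})$ against the target vector $(1, \ldots, 1)$.
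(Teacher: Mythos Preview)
Your proof is correct.  The overall strategy---reduce to showing $K(l) T^{k_0} K(l) \cap H = K_H(l)$, then exploit the last-row constraint coming from $h \in H$---is the same as the paper's, but the technical execution differs.  The paper applies $h$ to the column vector $g_0 e_j$ for a carefully chosen index $j$ (the one with $|t_j|$ maximal, after possibly replacing $h$ by $h^{-1}$), tracks the last coordinate through the product $k_1 g_0 t g_0^{-1} k_2 \cdot g_0 e_j$, and uses Lemma~\ref{g0entry} to force $t_j \in \cO$; it then repeats for each $i$.  You instead expand $h = M + A_1 M + M A_2 + A_1 M A_2$, use the uniform bound $v(M_{a,b}) \geq \mu_{\min}$ (again from Lemma~\ref{g0entry}) to control all three error terms at once, and then right-multiply the resulting row-congruence by $k_0$ to pin down every $t_m$ simultaneously.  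Your route avoids the ``pick the extremal index and possibly invert $h$'' maneuver and is a bit more streamlined; the paper's vector-based argument is more geometric but of equivalent strength.

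Two small remarks.  First, your case analysis as written skips $\mu_{\min} > 0$: you treat $\mu_{\min} < 0$ and ($\mu_{\min} = 0$ with some $\mu_{m'} > 0$).  The omitted case is harmless---if $\mu_{\min} > 0$ then every $\mu_m > 0$, so $v(t_m - 1) = 0$ while the congruence forces $v(t_m - 1) \geq \mu_{\min} + l \geq l \geq 1$, the same contradiction---but the second case should read $\mu_{\min} \geq 0$.  Second, once $\mu = 0$ is established, your original congruence already says $(t_1, \ldots, t_{n+1}) \equiv (1, \ldots, 1) \pmod{\p^l}$, so the separate ``reduce mod $\p^l$ and repeat'' step at the end is redundant.
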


\begin{proof}

Assume that $(J_H, \tl_H) = (\widetilde{T}_H(l), \tc_H)$ is standard, so that $(J, \tl) = ( \widetilde{T}(l)^{g_0}, \tc^{g_0})$ with $g_0$ as in Lemma \ref{adjproj}.  Lemma \ref{coeffsupport} then implies that $\text{supp}(\rho)$ is contained in $K(l) T^{g_0} K(l)$.  It therefore suffices to show that
\be
\label{coeffH}
K(l) T^{g_0} K(l) \cap H = K_H(l),
\ee
as this gives $\text{supp}(\rho) \cap H \subset K_H(l)$, and the reverse inclusion $K_H(l) \subset \text{supp}(\rho) \cap H$ is clear.  Moreover, (\ref{coeffH}) implies that $J \cap H = K_H(l)$, as we have
\[
K_H(l) \subset J \cap H \subset K(l) T^{g_0} K(l) \cap H = K_H(l).
\]

To establish (\ref{coeffH}), let $h \in K(l) T^{g_0} K(l) \cap H$, and write $h$ as $k_1 g_0 t g_0^{-1} k_2$ with $k_i \in K(l)$ and $t = \text{diag}(t_1, \ldots, t_{n+1})$.   We let $1 \leqslant j \leqslant n+1$ be such that $| t_j |$ is maximal; by replacing $h$ with $h^{-1}$ if necessary we may also assume that $| t_j | \geqslant | t_i^{-1} |$ for all $i$.  Let $e_i$ be the standard basis for $F^{n+1}$.  We will show that $t_i \in \cO^\times$ for all $i$ by examining the vector
\[
h g_0 e_j = k_1 g_0 t g_0^{-1} k_2 g_0 e_j.
\]
Because $g_0 e_j \in \cO^{n+1}$, we have $k_2 g_0 e_j \in g_0 e_j + \p^l \cO^{n+1}$, so that
\begin{align*}
h g_0 e_j & \in k_1 g_0 t g_0^{-1} (g_0 e_j + \p^l \cO^{n+1}) \\
& = k_1 g_0 (t e_j + t \p^l \cO^{n+1}).
\end{align*}
We have $t e_j = t_j e_j$, while our assumption that $| t_j |$ was maximal implies that $t \p^l \cO^{n+1} \subset t_j \p^l \cO^{n+1}$, which gives
\begin{align*}
h g_0 e_j & \in k_1 g_0 t_j (e_j + \p^l \cO^{n+1}) \\
& = t_j g_0 (e_j + \p^l \cO^{n+1}) \\
& = t_j g_0 e_j + t_j \p^l \cO^{n+1}.
\end{align*}
Because $h \in H$, it preserves the last entry of the vector $g_0 e_j$, which is equal to $(g_0)_{n+1,j}$.  By inspecting the last entry of the vectors in the equation above, we therefore have $(g_0)_{n+1,j} \in t_j (g_0)_{n+1,j} + t_j \p^l \cO$.  Because $(g_0)_{n+1,j} \in \cO^\times$ by Lemma \ref{g0entry}, this implies that $t_j \in \cO$, and therefore that $t_i \in \cO^\times$ for all $i$ by our assumption that $| t_j | \geqslant |t_i|, |t_i^{-1}|$.

Having established that $t_i \in \cO^\times$ for all $i$, we may apply the same argument as above to deduce that $h g_0 e_i \in t_i g_0 e_i + \p^l \cO^{n+1}$ for all $i$.  Comparing the last entries of these vectors as before gives $t_i \in 1 + \p^l$, which implies that $t$, and hence $h$, lie in $K(l)$ as required.

\end{proof}

\section{Proof of the trivial bound}
\label{sec:trivialbd}

\subsection{Notation}
\label{sec:notation1}

If $f$ is a complex-valued function on a group, we denote by $f^\vee$ the function given by $f^\vee(g) = \overline{f}(g^{-1})$.

\subsubsection{Number fields}
\label{sec:number fields}

Let $E/F$ be a CM extension of number fields.  Let $\cO$ and $\A$ be the integers and adeles of $F$.  We will denote places of $F$ by $v$ or $w$, possibly with some extra decoration.  For any place $v$ of $F$, we let $\cO_v$ be the integers in the completion $F_v$, $\p_v$ the maximal ideal of $\cO_v$, $\varpi_v$ a uniformizer, and $q_v$ the order of the residue field.  We fix a place $w$ of $F$ that splits in $E$, and write $\p$ and $q$ for $\p_w$ and $q_w$.

We let $\cO_E$ be the integers of $E$.  We denote places of $E$ by $u$ or $u'$.  If $v$ is a place of $F$, we let $E_v = E \otimes_F F_v$, and likewise for other objects associated with $E$.

\subsubsection{Hermitian spaces}
\label{sec:Hermitian spaces}

Let $V$ be a vector space of dimension $n+1$ over $E$, and let $\langle \, , \, \rangle_V$ be a nondegenerate Hermitian form on $V$ with respect to $E/F$.  We assume that $\langle \, , \, \rangle_V$ is positive definite at all infinite places.  Let $V_H \subset V$ be a codimension one subspace.

Let $x_1, \ldots, x_n$ be a basis of $V_H$, and let $x_{n+1} \in V_H^\perp$ be nonzero, so that $x_1, \ldots, x_{n+1}$ forms a basis of $V$.  We let $L \subset V$ and $L_H \subset V_H$ be the $\cO_E$-lattices spanned by $x_1, \ldots, x_{n+1}$ and $x_1, \ldots, x_n$ respectively.  This definition implies that $L_v = L_{H,v} \oplus \cO_{E,v} x_{n+1}$ for all finite places $v$.  We assume that $\langle \, , \, \rangle_V$ is integral on $L$.  We let $S$ be a finite set of places containing all infinite places and all places that ramify in $E/F$.  We also assume that for $v \notin S$ the lattices $L_v$ and $L_{H,v}$ are self-dual.

We now suppose that $v$ splits in $E/F$ as $u u'$, so that $V_v \simeq V_u \oplus V_{u'}$.  The Hermitian form induces a nondegenerate bilinear pairing $B_v : V_u  \times V_{u'} \to F_v$ of vector spaces over $F_v$, and we may write $\langle \, , \, \rangle_V$ in terms of $B_v$ as
\[
\langle (v_1, v_1'), (v_2, v_2') \rangle_V = ( B_v( v_1, v_2'), B_v( v_2, v_1') ) \in F_v \oplus F_v \simeq E_v.
\]
There is an isomorphism $\iota_{V,v}: V_v \simeq F_v^{n+1} \oplus F_v^{n+1}$ that respects the direct sum decomposition $V_v \simeq V_u \oplus V_{u'}$ and that carries $B_v$ to the standard bilinear form on $F_v^{n+1} \times F_v^{n+1}$.  We may assume that $\iota_{V,v}(V_{H,v}) = F_v^n \oplus F_v^n$.

We now further assume that the split place $v$ does not lie in $S$, and in particular that it is finite.  Because $\cO_{E,v} \simeq \cO_{E,u} \oplus \cO_{E,u'}$, we have $L_v \simeq L_u \oplus L_{u'}$.  Moreover, our assumption that $L_v$ was self-dual implies that $L_u$ and $L_{u'}$ are dual to each other under $B_v$, and likewise for $L_{H,u}$ and $L_{H,u'}$.  It follows that we may choose $\iota_{V,v}$ to send $L_u$ and $L_{u'}$ to the relevant copies of $\cO_v^{n+1} \subset F_v^{n+1}$, and likewise for $L_{H,u}$ and $L_{H,u'}$.

\subsubsection{Algebraic groups}
\label{sec:algebraic groups}

Let $G$ and $H$ be the unitary groups of $V$ and $V_H$. Our assumption that $V$ was positive definite implies that the adelic quotients of $G$ and $H$ are compact.  We let $Z \simeq {\rm U}(1)$ be the center of $G$, and define $\widetilde{H} = ZH$.

If $v$ splits in $E/F$ as $u u'$, the isomorphism $V_v \simeq V_u \oplus V_{u'}$ induces isomorphisms of $G_v$ with ${\rm GL}(V_u)$ and ${\rm GL}(V_{u'})$.  Applying the isomorphism $\iota_{V,v}$ defined above then gives an isomorphism $\iota_v : G_v \simeq {\rm GL}_{n+1}(F_v)$.  Note that this requires us to choose one of the places $u$ and $u'$, and changing our choice has the effect of composing $\iota_v$ with the automorphism $g \mapsto {}^t g^{-1}$ of ${\rm GL}_{n+1}(F_v)$.  We have $\iota_v(H_v) = {\rm GL}_n(F_v)$, embedded in ${\rm GL}_{n+1}$ as the upper left-hand block.

\subsubsection{Compact subgroups}
\label{sec:compact subgroups}

For $v \notin S$, we define $K_v$ and $K_{H,v}$ to be the stabilizer of $L_v$ in $G_v$ and of $L_{H,v}$ in $H_v$ respectively.  Our self-duality assumption on $L_v$ and $L_{H,v}$ implies that $K_v$ and $K_{H,v}$ are hyperspecial subgroups.  Moreover, the relation $L_v = L_{H,v} \oplus \cO_{E,v} x_{n+1}$ implies that $K_{H,v} = H_v \cap K_v$.  When $v \notin S$ is split in $E/F$, it follows from Subsections \ref{sec:Hermitian spaces} and \ref{sec:algebraic groups} that the isomorphism $\iota_v$ sends $K_v$ and $K_{H,v}$ to ${\rm GL}_{n+1}(\cO_v)$ and ${\rm GL}_{n}(\cO_v)$, respectively.  We let $K_w(l)$ and $K_{H,w}(l)$ denote the usual principal congruence subgroups of level $\p^l$.

At the place $w$, we define the maximal compact subgroup $K_{\widetilde{H},w}$ of $\widetilde{H}_w$ to be the product of $K_{H,w}$ and $\cO_w^\times$, where the latter is identified with the maximal compact subgroup of $Z_w \simeq F_w^\times$.

For finite places $v \in S$, we choose compact open subgroups $K_v < G_v$ and $K_{H,v} < H_v$ that stabilize $L_v$ and $L_{H,v}$, and satisfy $K_{H,v} = H_v \cap K_v$.  We let $K_f = \prod_{v < \infty} K_v$, which is compact and open in $G(\A_f)$.  We let $K_\infty = G_\infty$, and put $K = K_\infty K_f$.  We define $K_{H,f}$, $K_{H, \infty}$, and $K_H$ analogously for $H$.

\subsubsection{Measures}
\label{sec:measures}

We choose Haar measures $dg = \prod_v dg_v$ and $dh = \prod_v dh_v$ on $G(\A)$ and $H(\A)$ as follows.  At infinity, we let $dg_\infty$ and $dh_\infty$ give volume 1 to $G_\infty$ and $H_\infty$, and at finite places $v$, we let $dg_v$ and $dh_v$ give volume 1 to the compact subgroups $K_v$ and $K_{H,v}$.  We choose a Haar measure $dz = \prod_v dz_v$ on $Z(\A)$ by requiring that $dz_v$ assign volume 1 to the maximal compact subgroup of $Z_v$ at all places.  We have $\widetilde{H} \simeq H \times Z$, and we equip it with the product measure $dh dz$.

\subsection{A relative trace inequality}

The proof of both the trivial bound Proposition \ref{trivialbound}, and of Theorem \ref{mainperiod}, begin with the following inequality.  Number theorists may view it as the result of dropping all but one term from the spectral side of the relative trace formula for $(G,H)$, while analysis may view it as an application of the $T T^*$ method for bounding the norm of an operator.  In any case, it is proved using an elementary application of Cauchy-Schwartz.

\begin{lemma}
\label{ampineq}

Let $k_0 \in C_c( G(\A))$, and let $k = k_0 * k_0^\vee$.  If $f \in L^2( [G])$ with $\| f \|_2 = 1$, and $f_H \in L^2( [H])$, we have
\[
| \cP( R(k_0) f, \overline{f}_H ) |^2 \leqslant \int_{ [H] \times [H]} \overline{f}_H(x) f_H(y) \sum_{\gamma \in G(F) } k(x ^{-1} \gamma y) dx dy,
\]
where $R(k_0)$ denotes the action of $k_0$ in the right-regular representation.  In particular, the right hand side is non-negative.

\end{lemma}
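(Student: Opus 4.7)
The plan is to introduce the automorphic kernel of $R(k_0)$, rewrite the period as a pairing in $L^2([G])$, and apply a single Cauchy--Schwarz inequality. Writing $K_0(x,y) := \sum_{\gamma \in G(F)} k_0(x^{-1}\gamma y)$, one has $R(k_0)f(x) = \int_{[G]} K_0(x,y) f(y)\,dy$, and substituting this into the definition of $\cP$ and swapping integrals (justified because $[G]$ and $[H]$ are compact and $k_0$ is compactly supported) yields
\[
\cP(R(k_0)f, \overline{f_H}) = \int_{[G]} f(y)\, \Psi(y)\,dy, \qquad \Psi(y) := \int_{[H]} K_0(h,y)\, \overline{f_H(h)}\,dh.
\]
Cauchy--Schwarz in $L^2([G])$, combined with $\|f\|_2 = 1$, then gives $|\cP(R(k_0)f, \overline{f_H})|^2 \leqslant \|\Psi\|_{L^2([G])}^2$.

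Expanding the right-hand side and applying Fubini,
\[
\|\Psi\|_2^2 = \int_{[H] \times [H]} \overline{f_H(h_1)}\, f_H(h_2) \left( \int_{[G]} K_0(h_1, y)\, \overline{K_0(h_2, y)}\,dy \right) dh_1\, dh_2,
\]
so the proof reduces to identifying the inner integral with $\sum_{\gamma} k(h_1^{-1} \gamma h_2)$. The definition $k_0^\vee(g) = \overline{k_0(g^{-1})}$ gives $\overline{K_0(h_2, y)} = \sum_\gamma k_0^\vee(y^{-1}\gamma h_2) =: K_0^\vee(y, h_2)$, which is the automorphic kernel of $R(k_0^\vee)$. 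Since $R(k_0)R(k_0^\vee) = R(k_0 \ast k_0^\vee) = R(k)$, the composition rule for integral operators on the compact space $[G]$ gives
\[
\int_{[G]} K_0(h_1, y)\, K_0^\vee(y, h_2)\,dy = \sum_\gamma k(h_1^{-1}\gamma h_2),
\]
completing the bound; nonnegativity of the right-hand side is automatic since it equals $\|\Psi\|_2^2$.

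There is no real obstacle here: as the authors indicate, this is precisely the elementary $TT^\ast$-style inequality underlying the amplification method. The only care needed is to verify Fubini and the composition-of-kernels identity, both of which are immediate from compactness of $[G]$ and the compact support of $k_0$.
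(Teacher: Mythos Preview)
Your proof is correct and follows essentially the same approach as the paper: both rewrite the period as an inner product against $f$ in $L^2([G])$, apply Cauchy--Schwarz with $\|f\|_2=1$, expand the resulting square, and identify the inner integral via the kernel composition $k_0 * k_0^\vee = k$. The only cosmetic difference is that you phrase the last step as the composition rule for automorphic kernels, whereas the paper performs the corresponding unfolding over $G(F)$ by hand; these are the same computation.
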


\begin{proof}

Substituting the definition of $R(k_0) f$ in $\cP$ gives
\[
\cP( R(k_0) f, \overline{f}_H ) = \int_{[H]} \overline{f}_H(x) \int_{G(\A)} k_0(g) f(xg) dg dx,
\]
and after a change of variable this becomes
\[
\cP( R(k_0) f, \overline{f}_H ) = \int_{[H]} \overline{f}_H(x) \int_{G(\A)} k_0(x^{-1} g) f(g) dg dx.
\]
Folding over $G(F)$ and bringing the integral over $[G]$ to the outside gives
\begin{align*}
\cP( R(k_0) f, \overline{f}_H ) & =  \int_{[H]} \overline{f}_H(x) \int_{[G]} \sum_{\gamma \in G(F)} k_0(x^{-1} \gamma g) f(g) dg dx \\
& = \int_{[G]} \int_{[H]} \sum_{\gamma \in G(F)} \overline{f}_H(x) k_0(x^{-1} \gamma g) f(g) dx dg.
\end{align*}
If we apply Cauchy-Schwartz to the integral over $[G]$ and use the fact that $\| f \|_2 = 1$, we obtain
\begin{align*}
| \cP( R(k_0) f, \overline{f}_H ) |^2 & \leqslant \int_{[G]} \left| \int_{[H]} \sum_{\gamma \in G(F)} \overline{f}_H(x) k_0(x^{-1} \gamma g) dx \right|^2 dg \\
& = \int_{[G]} \int_{ [H] \times [H]} \sum_{\gamma_1, \gamma_2 \in G(F)} \overline{f}_H(x) f_H(y) k_0( x^{-1} \gamma_1 g) \overline{k}_0( y^{-1} \gamma_2 g) dx dy dg.
\end{align*}
We now return the integral over $[G]$ to the inside and unfold, which gives
\begin{align*}
| \cP( R(k_0) f, \overline{f}_H ) |^2 & \leqslant \int_{ [H] \times [H]} \overline{f}_H(x) f_H(y) \int_{[G]} \sum_{\gamma_1, \gamma_2 \in G(F)} k_0( x^{-1} \gamma_1 g) \overline{k}_0( y^{-1} \gamma_2 g) dg dx dy \\
& = \int_{ [H] \times [H]} \overline{f}_H(x) f_H(y) \int_{G(\A)} \sum_{\gamma \in G(F)} k_0( x^{-1} \gamma g) \overline{k}_0( y^{-1} g) dg dx dy.
\end{align*}
We may write $\overline{k}_0( y^{-1} g)$ as $k_0^\vee( g^{-1} y)$, so that the integral over $G(\A)$ is
\[
\int_{G(\A)} k_0( x^{-1} \gamma g) \overline{k}_0( y^{-1} g) dg = \int_{G(\A)} k_0( x^{-1} \gamma g) k_0^\vee( g^{-1} y) dg = k(x^{-1} \gamma y).
\]
This gives
\[
| \cP( R(k_0) f, \overline{f}_H ) |^2 \leqslant \int_{ [H] \times [H]} \overline{f}_H(x) f_H(y) \sum_{\gamma \in G(F)} k(x^{-1} \gamma y) dx dy
\]
as required.

\end{proof}

\subsection{Proof of Proposition \ref{trivialbound}}
\label{sec:trivial proof}

We recall the notation of Proposition \ref{trivialbound}, including the generic pair $(\chi, \chi_H)$, the compatible $(\chi, \chi_H)$-types $(J, \tl)$ and $(J_H, \tl_H)$, the functions $\phi$ and $\phi_H$, and the representation $\mu$.  We shall prove Proposition \ref{trivialbound} by applying Lemma \ref{ampineq} with $f$ equal to $\phi$, and $f_H$ running over a decomposition of $\phi_H$ into functions with small support.

We choose the test function $k_0$ to be the following spectral projector onto $\phi$.  Let $v' \neq w$ be an auxiliary finite place.  At infinity, we take $k_{0,\infty}$ to be $d_\mu \chi_\mu$, where $d_\mu$ is the dimension of $\mu$ and $\chi_\mu$ is its character, which projects to the $\mu$-isotypic subspace of $L^2(G_\infty)$.  At $w$, $k_{0,w}$ is the function supported on $J$ and equal to $\text{vol}(J)^{-1} \widetilde{\lambda}^{-1} \sim q^{n(n+1) l} \widetilde{\lambda}^{-1}$ there.  At $v'$, we let $K'_{v'} \subset K_{v'}$ be an open subgroup to be chosen later, and let $k_{0,v'} = \text{vol}(K'_{v'})^{-1} 1_{K'_{v'}}$ be the normalized characteristic function of $K'_{v'}$.  For the remaining places, we let $k_{0,v} = \text{vol}(K_v)^{-1} 1_{K_v}$.  It follows that $k_0$ is a projection operator, i.e. $k_0 = k_0^\vee$ and $k_0 = k_0 * k_0^\vee$.  The transformation properties of $\phi$ imply that $R(k_0) \phi = \phi$.  We define $K' = K'_{v'} \times \prod_{v \neq v'} K_v$, which contains the support of $k_0$.

Applying Lemma \ref{ampineq} with $f = \phi$ and this choice of $k_0$ gives
\be
\label{phiampineq}
| \cP( \phi, \overline{f}_H ) |^2 \leqslant \int_{ [H] \times [H]} \overline{f}_H(x) f_H(y) \sum_{\gamma \in G(F) } k_0(x ^{-1} \gamma y) dx dy
\ee
for any $f_H$.  To choose $f_H$, let $B = K' \cap H(\A)$, and let $x_i B$ be a set of cosets that covers $[H]$.  We let $f_{H,i}$ be a collection of functions such that $\text{supp}(f_i) \subset x_i B$, $\| f_{H,i} \|_2 \leqslant 1$, and $\phi_H = \sum f_{H,i}$.  We have
\[
| \cP( \phi, \overline{\phi}_H) |^2 = \Big| \sum_i \cP( \phi, \overline{f}_{H,i} ) \Big|^2 \ll \sum_i | \cP( \phi, \overline{f}_{H,i} ) |^2.
\]
We may apply (\ref{phiampineq}) to each $f_{H,i}$, and so it suffices to prove that
\be
\label{fHi}
\int_{ [H] \times [H]} \overline{f}_{H,i}(x) f_{H,i}(y) \sum_{\gamma \in G(F) } k_0(x ^{-1} \gamma y) dx dy \ll d_\mu^2 q^{nl}
\ee
for all $i$.

\begin{lemma}

If $K'_{v'}$ is chosen small enough, then only $\gamma \in H(F)$ contribute to the left hand side of (\ref{fHi}).

\end{lemma}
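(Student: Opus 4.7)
The plan is to exploit the compactness of the support of $k_0$ together with the discreteness of $G(F)$ in $G(\A)$, squeezing at the auxiliary place $v'$. First I would unpack the support condition: if $k_0(x^{-1}\gamma y) \neq 0$ with $x,y \in x_i B$, writing $x = x_i b_1$ and $y = x_i b_2$ with $b_1, b_2 \in B \subset K'$ gives $x_i^{-1} \gamma x_i \in b_1 K' b_2^{-1} = K'$ (since $K'$ is a subgroup containing $B$), hence $\gamma \in x_i K' x_i^{-1}$.

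Next I would introduce the auxiliary enlargement $K'_\text{max}$ of $K'$, obtained by replacing $K'_{v'}$ with the maximal compact $K_{v'}$. Every factor of $x_i K'_\text{max} x_i^{-1}$ is compact, so this set is compact in $G(\A)$; since $[G]$ is compact, $G(F)$ is discrete in $G(\A)$, and hence $\Gamma_i := G(F) \cap x_i K'_\text{max} x_i^{-1}$ is finite. Compactness of $[H]$ together with openness of $B$ in $H(\A)$ lets me cover $[H]$ by finitely many cosets $x_i B$, so only finitely many indices $i$ need be considered.

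For each $\gamma \in \Gamma_i \setminus H(F)$ I observe that $\gamma \neq 1$ (as $1 \in H(F)$), and hence $\gamma_{v'} \neq 1$ in $G_{v'}$ by the injectivity of $G(F) \hookrightarrow G(F_{v'})$ available for any affine algebraic group. Since conjugation by $x_{i,v'}$ is a homeomorphism, the sets $x_{i,v'} K'_{v'} x_{i,v'}^{-1}$ form a neighborhood basis of $1$ in $G_{v'}$ as $K'_{v'}$ shrinks through a neighborhood basis of $1$. I can therefore choose $K'_{v'}$ small enough that every element of the finite set $\bigcup_i (\Gamma_i \setminus H(F))$ escapes $x_i K' x_i^{-1}$ at the place $v'$, ruling out its contribution to (\ref{fHi}).

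The only mildly nontrivial input is the injectivity $G(F) \hookrightarrow G(F_{v'})$; beyond that the argument is a topological squeeze wrapped around the finite combinatorial input provided by discreteness of $G(F)$ and compactness of $[H]$.
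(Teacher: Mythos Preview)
Your approach matches the paper's: both trap $\gamma$ in a compact set that can be shrunk at $v'$, then invoke discreteness of $G(F)$. There is, however, a circularity you should address: the coset representatives $x_i$ depend on $B = K' \cap H(\A)$, which in turn depends on $K'_{v'}$, so you cannot legitimately choose $K'_{v'}$ based on the finite sets $\Gamma_i = G(F) \cap x_i K'_{\text{max}} x_i^{-1}$ without first arguing that these sets (and the associated exclusion conditions at $v'$) are stable as $K'_{v'}$ shrinks and the cover is refined. The paper sidesteps this by translating each $x_i$ on the left by $H(F)$ into a fixed compact fundamental domain $\Omega = \prod_v \Omega_v \subset H(\A)$; one then gets
\[
\gamma \in \prod_{v \neq v'} \Omega_v K_v \Omega_v^{-1} \times \bigcup_{y \in \Omega_{v'}} y K'_{v'} y^{-1},
\]
a set whose factor away from $v'$ is independent of all choices and whose factor at $v'$ can be made arbitrarily small uniformly in $x_i$. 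With this uniformization your argument goes through. Incidentally, the paper actually concludes that only $\gamma = e$ survives, which is stronger than the stated $\gamma \in H(F)$ but of course suffices.
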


\begin{proof}

We must show that there is a choice of $K'_{v'}$ such that if $x, y \in x_i B$ for some $x_i$, and if $\gamma \in G(F)$ satisfies $x^{-1} \gamma y \in \text{supp}(k_0)$, then $\gamma \in H(F)$.  We are free to translate $x_i$ on the left by $H(F)$, and so we may choose a compact set $\Omega = \prod \Omega_v \subset H(\A)$ containing a fundamental domain for $[H]$ and assume that $x_i \in \Omega$.  If we let $x = x_i b_1$ and $y = x_i b_2$ with $b_1, b_2 \in B$, then we have $b_1^{-1} x_i^{-1} \gamma x_i b_2 \in \text{supp}(k_0)$, so $x_i^{-1} \gamma x_i \in B \text{supp}(k_0) B^{-1} \subset K'$.  This implies that 
\[
\gamma \in x_i K' x_i^{-1} \subset \prod_{v \neq v'} \Omega_v K_v \Omega_v^{-1} \times \bigcup_{y \in \Omega_{v'}} y K'_{v'} y^{-1}.
\]
The compact set above is fixed at places away from $v'$, while at $v'$ it may be made arbitrarily small by shrinking $K'_{v'}$.  It follows that there is a choice of $K'_{v'}$ such that the only $\gamma$ lying in this set is the identity, as required.

\end{proof}

With this choice of $K'_{v'}$, the left hand side of (\ref{fHi}) simplifies to
\bes
\int_{ [H] \times [H]} \overline{f}_{H,i}(x) f_{H,i}(y) \sum_{\gamma \in H(F) } k_0(x ^{-1} \gamma y) dx dy = \langle R_H( k_H ) f_{H,i}, f_{H,i} \rangle,
\ees
where $R_H$ denotes the right-regular representation on $H$ and we have written $k_H$ for the restriction of $k_0$ to $H(\A)$.  We must show that this is $\ll d_\mu^2 q^{nl}$.  We have the trivial bound $\langle R_H( k_H) f_{H,i}, f_{H,i} \rangle \leqslant \| f_{H,i} \|_2^2 \| k_H \|_1 \leqslant \| k_H \|_1$, so it suffices to show $\| k_H \|_1 \ll d_\mu^2 q^{nl}$.  We have $\| k_H \|_1 = \| k_{H,w} \|_1 \| k_H^w \|_1$, and moreover $\| k_H^w \|_1 \ll \| k_H^w \|_\infty \ll d_\mu^2$.  The bound $\| k_{H,w} \|_1 \ll q^{nl}$ follows from the fact that $k_w$ is supported on $J$ and has size $q^{n(n+1)l}$ there, and the transversality result $J \cap H_w = K_{H,w}(\p^l)$ from Lemma \ref{Atrans}.

\section{Prelimiaries to amplification}
\label{sec:amp prelim}

This section contains results that will be used in the amplification argument in Section \ref{sec:amp}.

\subsection{Notation}

Let $\cP$ be the set of places of $F$ that are split in $E/F$ and do not lie in $S$.  Our amplifier will be supported at places in $\cP$.

\subsubsection{Root systems}

For each $v \in \cP$, we have the identification of $G_v$ with ${\rm GL}_{n+1}(F_v)$ chosen in Section \ref{sec:algebraic groups}.  We let $T_v$ be the maximal torus in $G_v$ that corresponds to the diagonal subgroup under this identification, and let $B_v$ correspond to the standard upper triangular Borel.  We let $X^*(T_v)$ and $X_*(T_v)$ be the groups of characters and cocharacters of $T_v$.  We let $\Phi$ be the roots of $T_v$ in $\text{Lie}(G_v)$, with positive roots $\Phi^+$ and simple roots $\Delta$ corresponding to $B_v$.  (Technically these depend on $v$, but we may naturally identify them for all $v$.)  We let $\rho \in X^*(T_v) \otimes_\Z \Q$ denote the half-sum of the positive roots, and $W$ be the Weyl group.  We define
\[
X^+_*(T_v) = \{ \mu \in X_*(T_v) : \langle \mu, \alpha \rangle \geqslant 0, \, \alpha \in \Delta \}.
\]
We introduce the function
\[
\| \mu \|^* = \underset{ w \in W}{\max} \langle w\mu, \rho \rangle
\]
on $X_*(T_v)$.  One sees that $\| \mu \|^*$ is a seminorm, with kernel equal to the central cocharacters; the condition that $\| \mu \|^* = \| -\mu \|^*$ follows from the fact that $\rho$ and $-\rho$ lie in the same Weyl orbit.

We define the analogous objects for $H$ and $\widetilde{H}$, and denote them by adding the appropriate subscript.

\subsubsection{Metrics on groups}
\label{sec:metrics}

Let $u$ be a place of $E$, and let $A \in \End_{E_u}(V_u)$.  If $u$ is infinite, then $V_u$ is a complex vector space with the positive definite Hermitian form $\langle \, , \, \rangle_V$, and we define $\| A \|_u$ to be the usual operator norm of $A$.  If $u$ is finite, we define $\| A \|_u$ to be the maximum of the $u$-adic norms of the matrix entries of $A$ in the basis $x_1, \ldots, x_{n+1}$, or equivalently as the smallest value of $\| a \|_u$ among $a \in E_u$ such that $A L_u \subset a L_u$.  If $A \in \End_E(V)$, we define $\| A \| = \prod_u \| A \|_u$.

If $u$ is a place of $E$ above a place $v$ of $F$, and $g \in G_v$, we may naturally talk about $\| g \|_u$.  It follows from the definition that $\| \cdot \|_u$ is bi-invariant under $K_v$.

Recall the subgroup $K_{\widetilde{H},w}$ of $\widetilde{H}_w$ defined in Section \ref{sec:compact subgroups}.  For $g_w \in K_w$, we define $d_w(g_w, K_{\widetilde{H},w})$ to be 0 if $g_w \in K_{\widetilde{H},w}$, and otherwise to be $q^{-l}$, where $l$ is the largest integer such that $g_w \in K_w(l) K_{\widetilde{H},w}$.

\subsection{Hecke algebras}

We let $\cH$ be the Hecke algebra of compactly supported functions on $G(\A_f)$ that are bi-invariant under $K_f$.  If $S'$ is a finite set of finite places, we likewise define the Hecke algebras $\cH_{S'}$ and $\cH^{S'}$ at $S'$ and away from $S'$, respectively.  For $v \in \cP$ and $\mu \in X_*( T_v)$, define $\tau(v, \mu) \in \cH_v$ to be the function supported on $K_v \mu(\varpi_v) K_v$ and taking the value $q_v^{-\| \mu \|^*}$ there.

We shall use the amplifier for ${\rm GL}_{n+1}$ constructed by Blomer--Maga \cite[Section 4]{BM}.  To introduce this, for $j \in \Z$ let $[j] = (j, 0, \ldots, 0)\in X_*(T_v)$, and $[j,-j] = (j, 0, \ldots, 0, -j)\in X_*(T_v)$.  We shall use the following results from \cite[Section 4]{BM}.

\begin{prop}
\label{amplifier}

Let $v \in \cP$.

\begin{enumerate}[(a)]

\item Let $\pi_v$ be an unramified representation of $G_v$, and $v \in \pi_v$ a nonzero spherical vector.  Define $\lambda(j)$ by $\tau(v, [j]) v = \lambda(j) v$.  If $q_v$ is sufficiently large depending on $n$, there is $1 \leqslant j \leqslant n+1$ such that $\lambda(j) \gg 1$, where the implied constant depends only on $n$.

\item For $1 \leqslant j \leqslant n+1$ we have
\[
\tau(v, [j]) \tau(v, [-j]) = \sum_{i = 0}^j c_{vij} \tau(v, [i,-i]),
\]
where $c_{vij} \ll 1$ and the implied constant depends only on $n$.

\end{enumerate}

\end{prop}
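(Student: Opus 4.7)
The plan is to transport both parts to the representation ring $R({\rm GL}_{n+1}(\C))$ of the Langlands dual group via the Satake isomorphism $\mathcal{S}: \cH_v \xrightarrow{\sim} R({\rm GL}_{n+1}(\C))$. Since $\|[j]\|^* = jn/2 = \langle \rho, [j] \rangle$ for $j \geq 0$, the chosen normalization $\tau(v,\mu) = q_v^{-\|\mu\|^*} \mathbf{1}_{K_v \mu(\varpi_v) K_v}$ agrees with the Satake normalization up to a Macdonald correction factor of the form $1 + O(q_v^{-1})$. Thus the eigenvalue of $\tau(v,[j])$ on the spherical vector is $\lambda(j) = h_j(\alpha_1,\ldots,\alpha_{n+1}) + O(q_v^{-1})$, where the $\alpha_i$ are the Satake parameters of $\pi_v$ and $h_j$ denotes the $j$-th complete homogeneous symmetric polynomial.

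For part (a), I will argue by contradiction: suppose $|\lambda(j)| \leq c$ for all $j = 1, \ldots, n+1$ with $c$ small. The generating-function identity $H(t)E(-t) = 1$ (Newton's identities) expresses each elementary symmetric polynomial $e_k(\alpha)$ as a polynomial in $h_1, \ldots, h_k$ with nonnegative integer coefficients of bounded size, yielding $|e_k(\alpha)| \leq C_n c$ for all $1 \leq k \leq n+1$ whenever $c \leq 1$. However, $e_{n+1}(\alpha) = \alpha_1\cdots\alpha_{n+1} = \omega_{\pi_v}(\varpi_v)$, which has modulus $1$ because $\pi_v$ is the local component of an automorphic representation with unitary central character. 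This contradicts $|e_{n+1}(\alpha)| \leq C_n c$ once $c < 1/C_n$, so some $|\lambda(j)| \gg_n 1$.

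For part (b), after Satake the product $\tau(v,[j])\cdot \tau(v,[-j])$ corresponds to the product $s_{[j]} \cdot s_{(0,\ldots,0,-j)}$ in $R({\rm GL}_{n+1}(\C))$. This tensor product decomposes into irreducibles according to the Littlewood--Richardson rule, with non-negative integer coefficients depending only on $n$. A weight argument pins down exactly which irreducibles appear: the product has zero total degree in the $\alpha_i$, and each variable's exponent lies in $[-j,j]$, so the only Schur characters that can occur are $s_{[i,0,\ldots,0,-i]}$ for $0 \leq i \leq j$. Pulling back via $\mathcal{S}^{-1}$ and absorbing the Macdonald corrections yields $\tau(v,[j])\tau(v,[-j]) = \sum_{i=0}^j c_{vij}\tau(v,[i,-i])$ with $c_{vij} \ll_n 1$.

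The main obstacle is the careful bookkeeping of the Satake normalization, tracking the $1 + O(q_v^{-1})$ corrections in a way that is uniform in $v$. The hypothesis in part (a) that $q_v$ is sufficiently large depending on $n$ is precisely what allows the Macdonald correction to be absorbed into the implied constant; for part (b), the normalization factors $q_v^{-\|\mu\|^*}$ are chosen so that the structure constants $c_{vij}$ inherit the boundedness of the Littlewood--Richardson coefficients uniformly in $v$. Once this normalization dictionary is established, the remaining work reduces to standard symmetric-function identities and tensor product combinatorics for ${\rm GL}_{n+1}(\C)$.
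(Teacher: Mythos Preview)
The paper does not prove this proposition; it is quoted from Blomer--Maga \cite[Section 4]{BM}, so there is no argument in the paper to compare against.  Your sketch is an independent proof attempt, and its overall strategy via the Satake isomorphism is the right one, but two steps need to be tightened.

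For part (a), the assertion $\lambda(j) = h_j(\alpha) + O(q_v^{-1})$ is not literally correct.  The Satake transform of $\tau(v,[j])$ is exactly the Hall--Littlewood polynomial $P_{(j)}(\alpha; q_v^{-1})$, and the difference $P_{(j)} - h_j$ is a symmetric polynomial of degree $j$ in $\alpha$ with coefficients in $q_v^{-1}\Z[q_v^{-1}]$.  Without an a priori bound on the $|\alpha_i|$ this error is not $O(q_v^{-1})$ uniformly, so you cannot pass from $|\lambda(j)|\leqslant c$ to $|h_j(\alpha)|\leqslant c'$ as written.  The fix is to work with the $P_{(j)}$ directly: the generating identity $\sum_{j\geqslant 0} Q_{(j)}(x;t)z^j = \prod_i (1-tx_iz)/(1-x_iz)$ with $Q_{(j)}=(1-t)P_{(j)}$ yields a recursion expressing each $e_m$ as a polynomial, with no constant term and with coefficients bounded for $t=q_v^{-1}$, in $P_{(1)},\dots,P_{(m)}$ and $e_0,\dots,e_{m-1}$; then your contradiction with $|e_{n+1}(\alpha)|=1$ goes through.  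You are also right that a hypothesis on the central character is needed: the statement is false for a completely general unramified $\pi_v$, and in the paper's application unitarity of the central character is available.

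For part (b), the weight argument you give does not pin down the decomposition.  Knowing that every monomial in $h_j(\alpha)h_j(\alpha^{-1})$ has total degree zero with each exponent in $[-j,j]$ only forces $\sum\lambda_i=0$, $\lambda_1\leqslant j$, $\lambda_{n+1}\geqslant -j$ for any contributing $s_\lambda$; it does not exclude weights such as $(2,1,0,\ldots,0,-1,-2)$.  That only $(i,0,\ldots,0,-i)$ occur is the genuine (and true) decomposition $\mathrm{Sym}^j V\otimes \mathrm{Sym}^j V^\vee=\bigoplus_{i=0}^j V_{(i,0,\ldots,0,-i)}$, which should be justified by Pieri's rule or cited.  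Moreover, since Satake lands in Hall--Littlewood polynomials rather than Schur characters, the structure constants $c_{vij}$ are Hall polynomials in $q_v^{-1}$, not bare Littlewood--Richardson coefficients; one should note that their support coincides with the LR support (the $p$-adic Horn problem) and that after the $q_v^{-\|\mu\|^*}$ normalization they are polynomials in $q_v^{-1}$ with integer coefficients bounded in terms of $n$.  With these corrections your argument becomes a valid proof.
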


\subsection{Spherical representations and spherical functions}
\label{sec:spherical}

In this section, $v$ will denote a place in $\cP$.  We shall use the following parametrization of unramified characters of $T_v$.  If $T_v^c$ is the maximal compact subgroup of $T_v$, then we may identify $T_v / T_v^c$ with $X_*(T_v)$ via the map sending $\mu \in X_*(T_v)$ to $\mu(\varpi_v) T_v^c$.  This lets us identify the group of unramified characters of $T_v$ with the group of complex characters of $X_*(T_v)$, which we denote by $\widehat{X}_*(T_v)$.  We may naturally identify $\widehat{X}_*(T_v)$ with $(\C^\times)^{n+1}$, and if $\alpha \in \widehat{X}_*(T_v)$ we let $(\alpha_1, \ldots, \alpha_{n+1})$ be its coordinates under this identification.  We say that $\alpha$ is $\theta$-tempered if $q_v^{-\theta} \leqslant |\alpha_i | \leqslant q_v^\theta$ for all $i$.

We next recall the classification of irreducible unramified representations of $G_v$ in terms of unramified characters of $T_v$.  If $\chi$ is an unramified character of $T_v$ corresponding to $\alpha \in \widehat{X}_*(T_v)$, we let $\pi_{v,\alpha}$, or $\pi_{v,\chi}$, be the unique irreducible unramified subquotient of the unitarily induced representation $\text{Ind}_{B_v}^{G_v} \chi$.  It is known \cite[Section 4.4]{Cartier} that all irreducible unramified representations of $G_v$ arise in this way, and that $\pi_{v,\alpha} \simeq \pi_{v,\alpha'}$ if and only if $\alpha$ and $\alpha'$ lie in the same Weyl orbit.  It follows that an irreducible unramified representation $\pi_v$ is isomorphic to $\pi_{v, \alpha}$ for a unique $\alpha \in \widehat{X}_*(T_v) / W$, which we refer to as the Satake parameter of $\pi_v$.  We say that $\pi_v$ is $\theta$-tempered if its Satake parameter is.  We note that the contragredient $\pi_{v, \alpha}^\vee$ of $\pi_{v, \alpha}$ is isomorphic to $\pi_{v, \alpha^{-1}}$.

For $\alpha \in \widehat{X}_*(T_v)$, we denote the spherical function with Satake parameter $\alpha$ by $\varphi_{v,\alpha}$.  To recall its definition, we let $v \in \pi_{v, \alpha}$ and $v^\vee \in \pi_{v, \alpha}^\vee$ be spherical vectors with $\langle v, v^\vee \rangle = 1$.  We then have
\[
\varphi_{v,\alpha}(g) = \langle \pi_{v, \alpha}(g) v, v^\vee \rangle.
\]
We shall require the following bound for $\varphi_{v,\alpha}$.

\begin{lemma}
\label{phibound}

Let $\alpha \in \widehat{X}_*(T_v)$.

\begin{enumerate}[(i)]

\item
\label{phibound1}
When $\alpha = 1$ is the trivial character, we have
\[
\varphi_{v,1}( \mu(\varpi_v) ) \ll (\| \mu \|^*)^{n(n+1)/2} q_v^{ - \| \mu \|^*}
\]
for $\mu \in X_*(T_v)$.

\item
\label{phibound2}
For general $\alpha$, we have
\[
\varphi_{v,\alpha}( \mu(\varpi_v) ) \leqslant \underset{ w \in W }{ \max} | \alpha( w\mu ) | \varphi_{v,1}( \mu(\varpi_v) ) \ll \underset{ w \in W }{ \max} | \alpha( w\mu ) | (\| \mu \|^*)^{n(n+1)/2} q_v^{ - \| \mu \|^*}
\]
for $\mu \in X_*(T_v)$.

\end{enumerate}

Both implied constants depend only on $n$.

\end{lemma}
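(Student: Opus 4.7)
The plan is to apply the Macdonald formula for spherical functions on ${\rm GL}_{n+1}(F_v)$ and reduce both parts to a uniform polynomial bound.

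First I would reduce to the case of dominant cocharacters. Because the Weyl group $W$ embeds in $K_v$ as the group of permutation matrices, every double coset $K_v \mu(\varpi_v) K_v$ contains $\mu^+(\varpi_v)$, where $\mu^+$ is the dominant representative of $W\mu$. By $K_v$-biinvariance of $\varphi_{v,\alpha}$ we may then assume $\mu$ is dominant, in which case $\|\mu\|^* = \langle \mu, \rho \rangle$ (since $\rho$ is itself dominant) and the modular character of $B_v$ satisfies $\delta_{B}^{1/2}(\mu(\varpi_v)) = q_v^{-\|\mu\|^*}$.

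Second, I would invoke Macdonald's formula: for dominant $\mu$,
\[
\varphi_{v,\alpha}(\mu(\varpi_v)) = \frac{q_v^{-\|\mu\|^*}}{W_0(q_v^{-1})} \sum_{w \in W} c_v(w\alpha)\,(w\alpha)(\mu),
\]
where $W_0(q_v^{-1}) = \sum_{w \in W} q_v^{-\ell(w)}$ is the Poincar\'{e} polynomial of $W$ and
\[
c_v(\alpha) = \prod_{\beta \in \Phi^+} \frac{1 - q_v^{-1} \alpha^{-\beta}}{1 - \alpha^{-\beta}}.
\]
Although individual summands can be singular along the walls $\alpha^\beta = 1$, the symmetrized sum is holomorphic in $\alpha$, and its coefficients depend rationally on $q_v^{-1} \in (0,1]$ so are bounded uniformly in $v \in \cP$.

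For part (ii), I would bound $|(w\alpha)(\mu)| \leqslant \max_{w' \in W} |\alpha(w'\mu)|$ and pull this factor outside the Macdonald sum, reducing to the trivial-character case in part (i). For part (i), I would take the limit $\alpha \to 1$ along a generic one-parameter subgroup $\alpha = \exp(t\nu)$ with $t \to 0$. Each factor $1 - (w\alpha)^{-\beta}$ vanishes linearly in $t$, while $(w\alpha)(\mu) = \exp(t\langle w\nu, \mu\rangle)$ is entire, and the antisymmetry in the Weyl-group sum forces the poles to cancel and produces, in the limit, a polynomial in $\mu$ of total degree at most $|\Phi^+| = n(n+1)/2$, with coefficients uniform in $q_v$. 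Bounding this polynomial by $(\|\mu\|^*)^{n(n+1)/2}$ then finishes part (i).

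The main obstacle is verifying that the Weyl-sum cancellation yields a polynomial whose degree is no larger than $n(n+1)/2$ and whose coefficients are controlled independently of $q_v$. This requires a careful Taylor expansion in $t$ together with the Weyl denominator identity to identify the leading antisymmetric contribution; alternatively, one may invoke the known expression of $\varphi_{v,1}$ in terms of Hall--Littlewood polynomials or Kostant's partition function, from which both the degree bound and the uniformity in $q_v$ are immediate.
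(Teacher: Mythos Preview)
Your argument for part (ii) has a genuine gap. You propose to bound $|(w\alpha)(\mu)| \leqslant \max_{w'} |\alpha(w'\mu)|$ term by term in the Macdonald sum and ``pull this factor outside''. But what remains after doing so is
\[
\frac{q_v^{-\|\mu\|^*}}{W_0(q_v^{-1})} \sum_{w \in W} |c_v(w\alpha)|,
\]
which is \emph{not} $\varphi_{v,1}(\mu(\varpi_v))$: the $c$-functions still depend on $\alpha$, and as you yourself note, they blow up along the walls $\alpha^\beta = 1$. So the reduction to part (i) fails, and no uniform bound in $\alpha$ comes out. (Even away from the walls, there is no reason for $\sum_w |c_v(w\alpha)|$ to be bounded by the polynomial controlling $\varphi_{v,1}$.) Note also that the statement asks for the sharp inequality $\leqslant$, not merely $\ll$.

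The paper avoids this entirely by using, for (ii), the \emph{integral representation}
\[
\varphi_{v,\alpha}(\mu(\varpi_v)) = \int_{K_v} (\delta^{1/2}\alpha)\bigl(A(k\mu(\varpi_v))\bigr)\,dk,
\]
where $A(\cdot)$ is the Iwasawa $T_v$-projection. Here the integrand factor $\delta^{1/2}(A(k\mu(\varpi_v)))$ is \emph{positive}, so one may legitimately pull out $\sup_k |\alpha(A(k\mu(\varpi_v)))|$, and what remains is \emph{exactly} $\varphi_{v,1}(\mu(\varpi_v))$. The supremum is then bounded by $\max_{w} |\alpha(w\mu)|$ using the fact that $\{A(k\mu(\varpi_v)) : k \in K_v\}$ lies in the convex hull of $W\mu$. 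This gives the exact inequality with no constant.

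For part (i), your limit computation is correct in principle and amounts to re-deriving the case $\alpha = 1$ of Macdonald's formula; the paper simply quotes the known fact (from Macdonald, via Casselman for non-simply-connected groups) that $\varphi_{v,1}(\mu(\varpi_v)) = q_v^{-\langle \mu,\rho\rangle} P(\mu, q_v^{-1})$ with $P$ a polynomial in $\mu$ of degree at most $|\Phi^+|$ whose coefficients are polynomials in $q_v^{-1}$.
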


\begin{proof}

We may assume without loss of generality that $\mu \in X^+_*(T_v)$.  We prove (\ref{phibound1}) using the formula for $\varphi_{v,1}$ given by Macdonald \cite[Prop. 4.6.1]{Mac}.  Note that Macdonald only proves this when $G_v$ is simply connected, but for a general $G_v$ the formula may be derived in the same way from the formula of Casselman \cite[Thm. 4.2]{Ca}.  Macdonald's formula states that
\[
\varphi_{v,1}( \mu(\varpi_v) ) = q_v^{ - \langle \mu, \rho \rangle} P(\mu, q_v^{-1})
\]
for $\mu \in X^+_*(T_v)$, where $P(\mu, q_v^{-1})$ is a polynomial on $X_*(T_v) \times \C$ that depends only on $n$; in other words, it is a polynomial in $\mu$ whose coefficients are polynomials in $q_v^{-1}$.  Moreover, the degree of $P(\mu, q_v^{-1})$ as a function of $\mu$ is at most $| \Phi^+ | = n(n+1)/2$, which gives (\ref{phibound1}).

We prove (\ref{phibound2}) by comparing the integral representations of $\varphi_{v,\alpha}$ and $\varphi_{v,1}$.  To state these representations, we recall the Iwasawa $A$-coordinate on $G_v$.  For $g \in G_v$, this is the element $A(g) \in X_*(T_v)$ such that $g \in N_v A(g)(\varpi_v) K_v$, where $N_v$ is the unipotent radical of $B_v$.  If we let $\delta \in \widehat{X}_*(T_v)$ be the modular character, we then have
\[
\varphi_{v,\alpha}( \mu(\varpi_v) ) = \int_{K_v} (\delta^{1/2} \alpha)( A( k \mu(\varpi_v)) ) dk.
\]
We may compare these formulas for $\varphi_{v,\alpha}$ and $\varphi_{v,1}$ as follows:
\begin{align*}
\varphi_{v,\alpha}( \mu(\varpi_v) ) & = \int_{K_v} (\delta^{1/2} \alpha)( A( k \mu(\varpi_v)) ) dk \\
& \leqslant \underset{k \in K_v}{\max} \, | \alpha( A( k \mu(\varpi_v)) ) |  \int_{K_v} \delta^{1/2}( A( k \mu(\varpi_v)) ) dk \\
& = \underset{k \in K_v}{\max} \, | \alpha( A( k \mu(\varpi_v)) ) | \varphi_{v,1}( \mu(\varpi_v) ).
\end{align*}
It is known that the set $\{ A( k \mu(\varpi_v)) : k \in K_v \}$ lies in the convex hull of $W \mu$, which we denote $\text{Conv}(W \mu)$.  (For instance, this follows by combining \cite[Prop. 5.4.2]{KP} with the fact that the image of the Satake transform is Weyl-invariant.)  We therefore have
\[
\varphi_{v,\alpha}( \mu(\varpi_v) ) \leqslant \underset{ \lambda \in \text{Conv}(W \mu)}{ \max} | \alpha(\lambda) | \varphi_{v,1}( \mu(\varpi_v) ) = \underset{ w \in W }{ \max} | \alpha( w\mu ) | \varphi_{v,1}( \mu(\varpi_v) ),
\]
which completes the proof.

\end{proof}

\subsection{Restriction of Hecke operators to $\widetilde{H}$}
\label{sec:Hecke restriction}

In this section, we continue to let $v$ denote a place in $\cP$.  If $\tau(v, \mu)|_{\widetilde{H}}$ denotes the restriction of $\tau(v, \mu)$ to $\widetilde{H}$, the following lemma gives a bound for the action of $\tau(v, \mu)|_{\widetilde{H}}$ in an unramified representation of $\widetilde{H}$.  This will be used to bound the diagonal term in the amplified relative trace inequality.

\begin{lemma}
\label{Hecke restriction}

Let $\alpha \in \widehat{X}_*(T_{\widetilde{H},v})$ be $\theta$-tempered, and let $\pi^{\widetilde{H}}_{v, \alpha}$ be the unramified representation of $\widetilde{H}_v$ with Satake parameter $\alpha$.  We assume that the central character of $\pi^{\widetilde{H}}_{v, \alpha}$ is unitary.  If $v$ and $v^\vee$ are spherical vectors in $\pi^{\widetilde{H}}_{v, \alpha}$ and $(\pi^{\widetilde{H}}_{v, \alpha})^\vee$ with $\langle v, v^\vee \rangle = 1$, and $\mu \in X_*(T_v)$ is not a multiple of $(1, \ldots, 1)$, we have
\[
\langle \pi^{\widetilde{H}}_{v, \alpha}( \tau(v, \mu)|_{\widetilde{H}} ) v, v^\vee \rangle \ll q_v^{-1/2 + \theta},
\]
where the implied constant depends only on $n$ and $\theta$.

\end{lemma}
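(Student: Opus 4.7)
The plan is to evaluate the matrix coefficient as an integral of $\tau(v,\mu)|_{\widetilde{H}}$ against the spherical function $\varphi^{\widetilde{H}}_{v,\alpha}$, decompose the support into $K_{\widetilde{H},v}$-double cosets, and bound each contribution separately. Since $\tau(v,\mu)|_{\widetilde{H}}$ is bi-$K_{\widetilde{H},v}$-invariant and takes the constant value $q_v^{-\|\mu\|^*}$ on its support $\widetilde{H}_v \cap K_v\mu(\varpi_v)K_v$, the matrix coefficient equals
\[
q_v^{-\|\mu\|^*}\int_{\widetilde{H}_v\cap K_v\mu(\varpi_v)K_v}\varphi^{\widetilde{H}}_{v,\alpha}(h)\,dh.
\]
The diagonal torus $T_v$ lies in $\widetilde{H}_v$ as a maximal torus, so we identify $T_v=T_{\widetilde{H},v}$. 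The support therefore decomposes into finitely many $K_{\widetilde{H},v}$-double cosets $K_{\widetilde{H},v}\mu_H^{(j)}(\varpi_v)K_{\widetilde{H},v}$, where $\mu_H^{(j)}$ runs over the $W_{\widetilde{H}}$-dominant representatives of the $W$-orbit of $\mu$. Writing the dominant form of $\mu$ as $(m_1\geqslant\dots\geqslant m_{n+1})$ and interpreting the last cocharacter coordinate as the $Z$-factor of $\widetilde{H}$, the representatives are $\mu_H^{(j)}=((m_i)_{i\neq j};\,m_j)$ for $1\leqslant j\leqslant n+1$.

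Next, I would apply the standard volume bound $\text{vol}(K_{\widetilde{H},v}\mu_H^{(j)}(\varpi_v)K_{\widetilde{H},v})\ll q_v^{2\|\mu_H^{(j)}\|^*_{\widetilde{H}}}$ together with Lemma~\ref{phibound}(ii) to obtain
\[
q_v^{-\|\mu\|^*}\,\text{vol}(K_{\widetilde{H},v}\mu_H^{(j)}(\varpi_v)K_{\widetilde{H},v})\,\varphi^{\widetilde{H}}_{v,\alpha}(\mu_H^{(j)}(\varpi_v))\ll \max_{w\in W_{\widetilde{H}}}|\alpha(w\mu_H^{(j)})|\cdot q_v^{\|\mu_H^{(j)}\|^*_{\widetilde{H}}-\|\mu\|^*}
\]
up to a polynomial factor in $\|\mu_H^{(j)}\|^*_{\widetilde{H}}$. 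Writing $|\alpha_i|=q_v^{\beta_i}$, the unitary central character hypothesis for $\pi^{\widetilde{H}}_{v,\alpha}$ forces $\beta_{n+1}=0$ and $\sum_{i=1}^n\beta_i=0$, while $\theta$-temperedness gives $|\beta_i|\leqslant\theta$. Denoting by $a_1^{(j)}\geqslant\dots\geqslant a_n^{(j)}$ the $H$-coordinates of $\mu_H^{(j)}$, the rearrangement inequality followed by Abel summation on $\sum_i\beta_{(i)}a_i^{(j)}$ then yields
\[
\max_{w\in W_{\widetilde{H}}}|\alpha(w\mu_H^{(j)})|\leqslant q_v^{\theta R_j},\qquad R_j:=\sum_{i=1}^{n-1}\min(i,n-i)\bigl(a_i^{(j)}-a_{i+1}^{(j)}\bigr).
\]

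The proof then reduces to the combinatorial inequality
\[
\|\mu\|^*-\|\mu_H^{(j)}\|^*_{\widetilde{H}}-\theta R_j\;\geqslant\;1/2-\theta,
\]
valid uniformly for every non-central dominant $\mu$ and every $1\leqslant j\leqslant n+1$. Since $\|\mu\|^*-\|\mu_H^{(j)}\|^*_{\widetilde{H}}=\tfrac12\sum_{i\neq j}|m_i-m_j|$, this comes down to a case analysis on the position of $m_j$ in the sequence $m_1\geqslant\dots\geqslant m_{n+1}$, exploiting that $\mu$ not central forces at least one consecutive gap $m_i-m_{i+1}\geqslant 1$. The residual polynomial factor in $\|\mu_H^{(j)}\|^*_{\widetilde{H}}$ is absorbed via the strict exponential decay once $\|\mu\|^*$ is large, and is bounded by a constant depending on $n$ and $\theta$ when $\|\mu\|^*$ is small. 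The hard part will be verifying this combinatorial inequality uniformly in $j$; the remaining steps are structural.
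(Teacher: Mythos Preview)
Your approach is structurally the same as the paper's: decompose the support into $K_{\widetilde{H},v}$-double cosets indexed by $W_H$-orbit representatives in $W\mu$, apply the volume bound and Lemma~\ref{phibound}(ii), and reduce to a combinatorial comparison between $\|\mu\|^* - \|\mu_H^{(j)}\|^*_{\widetilde{H}}$ and the tempered exponent. The difference is in how you handle the tempered bound, and your route is more laborious than necessary.

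The paper bypasses your Abel-summation quantity $R_j$ via a normalization trick. Because the unitary central character forces $\sum_i \beta_i = 0$, every term in the estimate is invariant under translating $\mu$ by a multiple of $(1,\dots,1)$, so one may assume $m_j = 0$. Then the crude bound $\bigl|\sum_i \beta_{\sigma(i)} a_i^{(j)}\bigr| \leq \theta \sum_i |a_i^{(j)}|$ already yields tempered exponent $\theta \cdot 2D_j$ (in your notation $D_j = \tfrac12\sum_{i\neq j}|m_i - m_j|$), and the combinatorial step collapses to $(1-2\theta)D_j \geq \tfrac12 - \theta$, which is immediate from $D_j \geq \tfrac12$ for non-central $\mu$. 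Equivalently, without normalizing, one uses $\sum_{i=1}^n\beta_i = 0$ to write $\sum_i\beta_{\sigma(i)}a_i^{(j)} = \sum_i\beta_{\sigma(i)}(a_i^{(j)} - m_j)$ and bound by $\theta \cdot 2D_j$ directly. This already shows $R_j \leq 2D_j$, so your proposed inequality $D_j - \theta R_j \geq \tfrac12 - \theta$ follows in one line; the case analysis on the position of $m_j$ is unnecessary. The polynomial factor is then absorbed exactly as you indicate, splitting according to whether $\sum_i |m_i - m_j|$ equals $1$ or is at least $2$.
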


\begin{proof}

We begin by describing the function $\tau(v, \mu)|_{\widetilde{H}}$.  We claim that
\be
\label{coset restriction}
K_v \mu(\varpi_v) K_v \cap \widetilde{H}_v = \bigcup_{\lambda \in W \mu} K_{\widetilde{H},v} \lambda(\varpi_v) K_{\widetilde{H},v}.
\ee
To prove the claim, it is clear that the right hand set is contained in the left.  For the reverse inclusion, let $h \in K_v \mu(\varpi_v) K_v \cap \widetilde{H}_v$.  The Cartan decomposition on $\widetilde{H}_v$ implies that $h \in K_{\widetilde{H},v} \lambda(\varpi_v) K_{\widetilde{H},v}$ for some $\lambda \in X_*(T_{\widetilde{H},v})$, and comparing this with the Cartan decomposition on $G_v$ we see that $\lambda \in W\mu$ as required.

Let $\lambda^{(1)}, \ldots, \lambda^{(a)}$ be a set of representatives for the $W_H$-orbits in $W \mu$.  Equation (\ref{coset restriction}) then gives $\tau(v, \mu)|_{\widetilde{H}} = q_v^{- \| \mu \|^*} \sum_{1 \leqslant i \leqslant a} 1_{\widetilde{H}}(v, \lambda^{(i)})$, where $1_{\widetilde{H}}(v, \lambda)$ denotes the characteristic function of the set $K_{\widetilde{H},v} \lambda(\varpi_v) K_{\widetilde{H},v}$.  This formula for $\tau(v, \mu)|_{\widetilde{H}}$ gives
\begin{align*}
\langle \pi^{\widetilde{H}}_{v, \alpha}( \tau(v, \mu)|_{\widetilde{H}} ) v, v^\vee \rangle & = q_v^{- \| \mu \|^*} \sum_{1 \leqslant i \leqslant a} \int_{ K_{\widetilde{H},v} \lambda^{(i)}(\varpi_v) K_{\widetilde{H},v} } \varphi^{\widetilde{H}}_{v, \alpha}(h) dh \\
& = q_v^{- \| \mu \|^*} \sum_{1 \leqslant i \leqslant a} \text{vol}( K_{\widetilde{H},v} \lambda^{(i)}(\varpi_v) K_{\widetilde{H},v} ) \varphi^{\widetilde{H}}_{v, \alpha}( \lambda^{(i)}(\varpi_v)),
\end{align*}
where $\varphi^{\widetilde{H}}_{v, \alpha}$ is the spherical function on $\widetilde{H}_v$.  We have $\text{vol}( K_{\widetilde{H},v} \lambda^{(i)}(\varpi_v) K_{\widetilde{H},v} ) \ll q_v^{ 2 \| \lambda^{(i)} \|_{\widetilde{H}}^*}$, where the implied constant depends only on $n$.  Combining this with Lemma \ref{phibound} for the group $\widetilde{H}_v$ gives
\be
\label{lambda i sum}
\langle \pi^{\widetilde{H}}_{v, \alpha}( \tau(v, \mu)|_{\widetilde{H}} ) v, v^\vee \rangle \ll q_v^{- \| \mu \|^*} \sum_{1 \leqslant i \leqslant a} \underset{ w \in W_H}{ \max} | \alpha(w \lambda^{(i)}) | ( \| \lambda^{(i)} \|^*_{\widetilde{H}} )^{ n(n-1)/2} q_v^{ \| \lambda^{(i)} \|_{\widetilde{H}}^*}.
\ee

We next examine the relation between $\mu$ and $\lambda^{(i)}$, and the difference $\| \lambda^{(i)} \|_{\widetilde{H}}^* - \| \mu \|^*$.  Write $\mu = (\mu_1, \ldots, \mu_{n+1})$, and assume that the $\mu_i$ are in non-increasing order.  There is a $1 \leqslant k \leqslant n+1$ such that $\lambda^{(i)} = (\mu_1, \ldots, \mu_{k-1}, \mu_{k+1}, \ldots, \mu_{n+1}, \mu_k)$.  Moreover, because all the expressions involving $\mu$ and $\lambda^{(i)}$ appearing on the right hand side of (\ref{lambda i sum}) are invariant under adding multiples of $(1, \ldots, 1)$ to $\mu$, we may assume that $\mu_k = 0$, and therefore that $\mu_1 \geqslant \ldots \geqslant \mu_{k-1} \geqslant 0 \geqslant \mu_{k+1} \geqslant \ldots \geqslant \mu_{n+1}$.  (Note that by assuming our original $\mu$ is not a multiple of $(1, \ldots, 1)$, we ensure that this normalized $\mu$ is not zero.)  It follows that
\[
\| \lambda^{(i)} \|_{\widetilde{H}}^* - \| \mu \|^* = -\frac{1}{2}( \mu_1 + \ldots + \mu_{k-1}) + \frac{1}{2}( \mu_{k+1} + \ldots + \mu_{n+1}) = -\frac{1}{2} \sum_{j=1}^{n+1} | \mu_j |.
\]
Our assumption that $\alpha$ is $\theta$-tempered implies that
\[
\underset{ w \in W_H}{ \max} | \alpha(w \lambda^{(i)}) | \leqslant q_v^{\theta \sum_{j = 1}^{n+1} | \lambda^{(i)}_j |} = q_v^{ \theta \sum_{j = 1}^{n+1} | \mu_j |},
\]
which gives
\begin{align*}
q_v^{- \| \mu \|^*} \underset{ w \in W_H}{ \max} | \alpha(w \lambda^{(i)}) | ( \| \lambda^{(i)} \|^*_{\widetilde{H}} )^{ n(n-1)/2} q_v^{ \| \lambda^{(i)} \|_{\widetilde{H}}^*} & \leqslant ( \| \lambda^{(i)} \|^*_{\widetilde{H}} )^{ n(n-1)/2} q_v^{ (-1/2 + \theta) \sum | \mu_j |} \\
& \ll \Big( \sum_{j=1}^{n+1} | \mu_j | \Big)^{ n(n-1)/2} q_v^{ (-1/2 + \theta) \sum | \mu_j |}.
\end{align*}
When $\sum | \mu_j | = 1$ this is $\ll q_v^{-1/2 + \theta}$, while for $\sum | \mu_j | \geqslant 2$ it is $\ll_\epsilon q_v^{ (-1/2 + \theta + \epsilon) \sum | \mu_j |} \leqslant q_v^{-1/2 + \theta}$, if $\varepsilon$ is chosen small enough depending on $\theta$. Summing this bound over $i$ in (\ref{lambda i sum}) completes the proof.

\end{proof}

\subsection{Diophantine lemmas}
\label{sec:diophantine}

In this section we prove Lemma \ref{Hclose}, which will be used to show that those $\gamma \in G(F)$ that contribute to the off-diagonal term of the geometric side of the relative trace inequality are bounded away from $\widetilde{H}$.  We recall the norms and distance functions introduced in Section \ref{sec:metrics}.

\begin{lemma}
\label{Hecke height}

Let $v \in \cP$ split in $E$ as $v = u u'$.  If $g \in K_v \mu(\varpi_v) K_v$ for $\mu \in X_*(T_v)$, then $\{ \| g \|_u, \| g \|_{u'} \} = \{ q_v^{ \max \mu_i }, q_v^{ \max -\mu_i } \}$.

\end{lemma}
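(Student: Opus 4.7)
The plan is to reduce the claim to a direct computation on the torus element $\mu(\varpi_v)$ by exploiting the bi-$K_v$-invariance of the norms, and then to use the explicit structure of $G_v$ at a split place to evaluate both $\|\mu(\varpi_v)\|_u$ and $\|\mu(\varpi_v)\|_{u'}$.

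First, I would use the bi-$K_v$-invariance of $\|\cdot\|_u$ noted in Section~\ref{sec:metrics}. Writing $g = k_1 \mu(\varpi_v) k_2$ with $k_1, k_2 \in K_v$ gives $\|g\|_u = \|\mu(\varpi_v)\|_u$, and likewise $\|g\|_{u'} = \|\mu(\varpi_v)\|_{u'}$. So it is enough to determine these two quantities.

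Next, I would exploit the split structure from Section~\ref{sec:Hermitian spaces}. Under the decomposition $V_v = V_u \oplus V_{u'}$ and the isomorphism $\iota_{V,v}$ carrying $B_v$ to the standard bilinear pairing on $F_v^{n+1} \times F_v^{n+1}$ and the lattices $L_u$, $L_{u'}$ to the standard lattices $\cO_v^{n+1}$, any $g \in G_v$ acts as some matrix $g_u$ on $V_u$ and as $g_{u'}$ on $V_{u'}$, with the unitarity condition $B_v(g_u\xi, g_{u'}\eta) = B_v(\xi, \eta)$ translating into ${}^t g_u\, g_{u'} = I$, i.e.\ $g_{u'} = {}^t g_u^{-1}$. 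Since the basis $x_1,\ldots,x_{n+1}$ corresponds to the standard basis of each copy of $F_v^{n+1}$, the definition of the norms gives
\[
\|g\|_u = \max_{i,j} |(g_u)_{ij}|_u, \qquad \|g\|_{u'} = \max_{i,j} |({}^t g_u^{-1})_{ij}|_{u'}.
\]

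Finally, I would specialize to $g = \mu(\varpi_v)$. Under the isomorphism $\iota_v$ corresponding to the place $u$, the cocharacter identification of $T_v$ with the diagonal torus of ${\rm GL}_{n+1}(F_v)$ (Section~\ref{sec:algebraic groups}) sends $\mu(\varpi_v)$ to ${\rm diag}(\varpi_v^{\mu_1}, \ldots, \varpi_v^{\mu_{n+1}})$, whose transpose-inverse is ${\rm diag}(\varpi_v^{-\mu_1}, \ldots, \varpi_v^{-\mu_{n+1}})$. Because $v$ splits in $E/F$ we have $E_u = E_{u'} = F_v$ with $|\varpi_v|_u = |\varpi_v|_{u'} = q_v^{-1}$, hence
\[
\|\mu(\varpi_v)\|_u = \max_i q_v^{-\mu_i} = q_v^{\max_i(-\mu_i)}, \qquad \|\mu(\varpi_v)\|_{u'} = \max_i q_v^{\mu_i} = q_v^{\max_i \mu_i},
\]
which gives the claim. (If one had made the opposite choice of $u$ vs.\ $u'$ when defining $\iota_v$, the two computations simply swap, consistent with the statement that only the unordered pair is determined.) The only step requiring genuine care is the identification of the action on $V_{u'}$ as the transpose-inverse of the action on $V_u$; everything else is bookkeeping with the Cartan decomposition and the definitions of the norms.
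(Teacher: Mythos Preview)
Your proof is correct and follows essentially the same route as the paper: reduce to $\mu(\varpi_v)$ by bi-$K_v$-invariance, then use the split description of $G_v$ (with the action on $V_{u'}$ being the transpose-inverse of the action on $V_u$) to compute the two norms directly. The paper phrases the second step by citing that swapping $u$ and $u'$ composes $\iota_v$ with $g\mapsto{}^t g^{-1}$, which is exactly your transpose-inverse identification; your assignment of which value goes with $u$ versus $u'$ differs from the paper's, but since only the unordered pair is asserted this is immaterial.
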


\begin{proof}

Because $\| \cdot \|_u$ and $\| \cdot \|_{u'}$ are bi-invariant under $K_v$, it suffices to calculate $\{ \| \mu(\varpi_v) \|_u, \| \mu(\varpi_v) \|_{u'} \}$.  We may assume without loss of generality that $u$ is the place used to define the isomorphism $\iota_v$ of Section \ref{sec:algebraic groups}.  In this case, it follows from our definitions that $\| \mu(\varpi_v) \|_u = q_v^{ \max \mu_i}$.  Moreover, the discussion in Subsection \ref{sec:algebraic groups} implies that $\| \mu(\varpi_v) \|_{u'}$ is equal to the maximum valuation of the entries of $\mu(\varpi_v)^{-1}$, which is $q_v^{ \max -\mu_i }$.

\end{proof}

\begin{lemma}
\label{Hclose}

Let $\gamma \in G(F)$, and assume that $\gamma_w \in K_w$.  Let $\| \gamma \|^w = \prod_{u \nmid w} \| \gamma \|_u$ be the contribution to $\| \gamma \|$ from places not dividing $w$.  There is $C > 0$ depending only on $V$ such that if
\be
\label{distance vs height}
\| \gamma \|^w d(\gamma_w, K_{\widetilde{H},w})^2 < C
\ee
then $\gamma \in \widetilde{H}(F)$.

\end{lemma}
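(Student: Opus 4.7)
The plan is to characterize $\widetilde{H}(F)$ inside $G(F)$ as the vanishing locus of $n$ algebraic functionals on $G$, namely the $E$-valued coordinates
\[
a_i(\gamma) \;=\; \text{coefficient of } x_i \text{ in } \gamma(x_{n+1}), \qquad 1 \leqslant i \leqslant n,
\]
and then to bound each $|a_i(\gamma)|_u$ locally so that the product formula on $E$ forces them all to vanish. Since $\gamma$ is unitary, $\gamma(V_H) = V_H$ iff $\gamma(V_H^\perp) = V_H^\perp$, and because $V_H^\perp = E\cdot x_{n+1}$ is one-dimensional this is the same as $\gamma \in \widetilde{H} = ZH$; equivalently, the $n$ quantities $a_i(\gamma)$ all vanish.

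For each place $u$ of $E$ I will establish a local bound. At a finite place $u \nmid w$, the coordinate $a_i(\gamma)$ is a matrix entry of $\gamma$ in the basis $x_1, \ldots, x_{n+1}$, so the definition of $\|\cdot\|_u$ in Section \ref{sec:metrics} immediately gives $|a_i(\gamma)|_u \leqslant \|\gamma\|_u$. At an archimedean $u$, the same estimate holds with a constant $C_{V,u}$ depending only on the Gram matrix of $x_1, \ldots, x_{n+1}$ in $\langle\,,\,\rangle_V$, hence only on $V$. The crucial estimate is at the two places $u, u'$ of $E$ above the split place $w$: writing $d(\gamma_w, K_{\widetilde{H},w}) = q^{-l}$, we can decompose $\gamma_w = k \tilde h$ with $k \in K_w(l)$ and $\tilde h \in K_{\widetilde{H},w}$. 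Then $\tilde h(x_{n+1}) = z x_{n+1}$ for some $z \in \cO_{E,w}^\times$, while under the split identification of Section \ref{sec:algebraic groups} the element $k$ has matrix in $1 + \varpi_w^l M_{n+1}(\cO_w)$ at each of $u$ and $u'$. Consequently $\gamma_w(x_{n+1}) - z\, x_{n+1} \in \varpi_w^l L_w$, which gives
\[
|a_i(\gamma)|_u \;\leqslant\; q^{-l}, \qquad |a_i(\gamma)|_{u'} \;\leqslant\; q^{-l},
\]
for every $i \leqslant n$. (If $\gamma_w \in K_{\widetilde{H},w}$ both quantities simply vanish.)

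If some $a_i(\gamma) \neq 0$, the product formula for $E^\times$ yields
\[
1 \;=\; \prod_u |a_i(\gamma)|_u \;\leqslant\; \Big(\prod_{u \mid \infty} C_{V,u}\Big)\, \|\gamma\|^w \, d(\gamma_w, K_{\widetilde{H},w})^2,
\]
the square at the end arising from the two places of $E$ above $w$. Taking $C$ to be the reciprocal of the archimedean constant forces all $a_i(\gamma)$ to vanish whenever (\ref{distance vs height}) holds, hence $\gamma \in \widetilde{H}(F)$. The only nontrivial step is the verification of the bound at $u, u'$, where one must unfold the decomposition $\gamma_w = k\tilde h$ in matrix coordinates and use the split structure of $E$ over $w$; the remaining ingredients are either definitional or the product formula itself.
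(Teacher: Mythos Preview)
Your proof is correct and follows essentially the same strategy as the paper: identify $\widetilde{H}$ as the vanishing locus of finitely many $E$-valued functions on $G$, bound each function locally, and apply the product formula. The only cosmetic difference is that you use the raw matrix entries $a_i(\gamma)$ (the coefficient of $x_i$ in $\gamma x_{n+1}$), whereas the paper uses the inner products $\langle \gamma x_{n+1}, x_i \rangle_V$; these differ by the Gram matrix of the basis, so the vanishing loci coincide and the local estimates transfer with at most a constant depending on $V$. Your choice is slightly more direct at finite places (since $\|\cdot\|_u$ is defined there via matrix entries), while the paper's choice makes the archimedean bound a one-line Cauchy--Schwarz; either way the argument goes through.
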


\begin{proof}

It suffices to prove that (\ref{distance vs height}) implies
\be
\label{V product formula}
\prod_u | \langle \gamma x_{n+1}, x_i \rangle_V |_u < 1
\ee
for all $i \neq n+1$.  Indeed, this inequality implies that $\langle \gamma x_{n+1}, x_i \rangle_V = 0$ for all $i \neq n+1$ by the product formula, and hence that $\gamma x_{n+1} \in Ex_{n+1}$ as required.

To establish (\ref{V product formula}), we bound $| \langle \gamma x_{n+1}, x_i \rangle_V |_u$ at each place $u$.  When $u$ is infinite, we have
\[
| \langle \gamma x_{n+1}, x_i \rangle_V |_u \leqslant \| \gamma \|_u \| x_{n+1} \|_u \| x_i \|_u \leqslant \| x_{n+1} \|_u \| x_i \|_u,
\]
which is bounded by a constant depending only on $V$.  Next, suppose that $v \neq w$ is a finite place of $F$ that splits in $E$ as $v = u u'$.  Let $a_u \in E_u$ be an element of minimal valuation such that $\gamma L_u \subset a_u L_u$, so that $\| \gamma \|_u = \| a_u \|_u$.  By the remarks of Subsection \ref{sec:Hermitian spaces}, the $u$-component $\langle \gamma x_{n+1}, x_i \rangle_{V,u}$ is equal to $B_v( \gamma x_{n+1,u}, x_{i,u'} )$, where $B_v : V_u \times V_{u'} \to F_v \simeq E_u$ is the bilinear pairing introduced there.  Moreover, we have $x_{n+1,u} \in L_u$ and $x_{i,u'} \in L_{u'}$, and our assumption that $\langle \, , \, \rangle_V$ is integral on $L$ means that $B_v$ pairs $L_u$ and $L_{u'}$ integrally.  It follows that $B_v( \gamma x_{n+1,u}, x_{i,u'} ) \in a_u \cO_{E,u}$, and hence that $| \langle \gamma x_{n+1}, x_i \rangle_V |_u \leqslant \| \gamma \|_u$.  The same bound may also be established when $u$ lies over a nonsplit finite place of $F$.

Finally, let $w$ split in $E$ as $u u'$.  Let $d(\gamma_w, K_{\widetilde{H},w}) = q^{-l}$, so that $\gamma_w \in K_{\widetilde{H},w} K_w(l)$.  This implies that $\gamma x_{n+1,u} \in \cO_{E,u} x_{n+1,u} + \p^l L_u$, and it follows as in the case above that $| \langle \gamma x_{n+1}, x_i \rangle_V |_u \leqslant q^{-l} = d(\gamma_w, K_{H,w})$.

Combining these bounds gives
\[
\prod_u | \langle \gamma x_{n+1}, x_i \rangle_V |_u < C_\infty \| \gamma \|^w d(\gamma_w, K_{H,w})^2,
\]
where $C_\infty$ is a constant depending only on $V$ coming from the infinite places.  If we choose the constant $C$ in the lemma to be $1/C_\infty$, then condition (\ref{distance vs height}) implies (\ref{V product formula}) as required.

\end{proof}

\subsection{Temperedness of $\pi$ and $\pi_H$}
\label{sec:tempered}

We now deduce the temperedness of $\pi$ and $\pi_H$ at split places from work of Caraiani and Labesse.

\begin{lemma}
\label{tempered}

The representations $\pi_v$ and $\pi_{H,v}$ are tempered for all $v$ that split in $E/F$.

\end{lemma}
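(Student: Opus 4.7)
The plan is to deduce temperedness of $\pi_v$ and $\pi_{H,v}$ at split places from the temperedness of the associated base-changed representations on general linear groups, as established by Caraiani. At infinite places the claim is immediate, since $G_\infty$ and $H_\infty$ are compact and all irreducible representations of compact groups are tempered; so it suffices to treat finite split places. First, I would apply Labesse's construction \cite{Lab} to produce weak base changes $\Pi$ of $\pi$ to ${\rm GL}_{n+1}(\A_E)$ and $\Pi_H$ of $\pi_H$ to ${\rm GL}_n(\A_E)$. Writing $\Pi$ as an isobaric sum $\boxplus_i \Pi_i$ of unitary cuspidal automorphic representations $\Pi_i$ of ${\rm GL}_{n_i}(\A_E)$, each constituent $\Pi_i$ is conjugate self-dual with respect to $E/F$, since $\pi$ comes from a unitary group.

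Next, I would verify the hypotheses of Caraiani's theorem for each $\Pi_i$. Because $V$ is positive definite at all infinite places, $\pi_\infty$ is an irreducible finite-dimensional representation of the compact group $G_\infty$, and in particular its infinitesimal character $\lambda + \rho$ is automatically regular. The base change construction at infinity then makes each $\Pi_i$ regular algebraic, and combined with conjugate self-duality this places $\Pi_i$ in the class of regular algebraic, conjugate self-dual, cuspidal representations of ${\rm GL}_{n_i}(\A_E)$. Caraiani's theorem \cite{Caraiani} then implies that every $\Pi_i$ is tempered at every finite place of $E$, so $\Pi$ itself is tempered at every finite place; an identical argument is carried out for $\Pi_H$.

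Finally, to transfer temperedness back to $\pi$, let $v$ be a finite place of $F$ splitting in $E$ as $v = uu'$. By the explicit form of local base change at split places -- which is inherent in the trace-formula comparison underlying Labesse's construction -- one has $\Pi_u \simeq \pi_v$ under the identification $\iota_v \colon G_v \simeq {\rm GL}_{n+1}(F_v)$ of Section \ref{sec:algebraic groups}. Temperedness of $\Pi_u$ therefore yields temperedness of $\pi_v$, and the analogous statement for $\Pi_H$ gives temperedness of $\pi_{H,v}$. The main obstacle is this last local-global compatibility at split places inside $S$ where $\pi_v$ may be ramified: at unramified split places it follows immediately from matching Satake parameters, whereas at ramified split places one must extract it from the fact that local base change at split places is canonical, amounting essentially to the identity map after identifying $G_v$ with ${\rm GL}_{n+1}(F_v)$.
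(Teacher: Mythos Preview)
Your proposal follows the same overall strategy as the paper, but there is a genuine gap: you assert without justification that the isobaric constituents $\Pi_i$ of the base change $\Pi$ are cuspidal. Labesse's theorem \cite[Corollary 5.3]{Lab} only yields that $\Pi$ is an isobaric sum of \emph{discrete} conjugate self-dual representations, which a priori could include Speh-type representations; Caraiani's theorem does not apply to those. The paper closes this gap by invoking condition (C2): since $\pi_w$ is an irreducible principal series representation it is generic, and local--global compatibility at the split place $w$ forces $\Pi_u$ (for $u \mid w$) to be generic as well. A non-cuspidal discrete constituent would produce a non-generic local component, so each $\Pi_i$ must in fact be cuspidal. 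You never use (C2), and without it the reduction to Caraiani's theorem does not go through.

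There is a second, smaller gap. You claim that base change automatically makes each $\Pi_i$ regular algebraic, but this requires care: while $\Pi$ inherits a regular C-algebraic infinitesimal character from $\pi$, passing to a constituent $\Pi_i$ on ${\rm GL}_{n_i}$ changes the relevant $\rho$-shift, so $\Pi_i$ itself may fail to be C-algebraic when the parities of $n+1$ and $n_i$ differ. The paper handles this (following \cite[Lemma 6.1]{MS}) by noting that for each $i$ either $\Pi_i$ or its twist $\Pi_i \otimes \eta$ by a suitable unitary Hecke character is regular C-algebraic, and applies Caraiani's theorem to whichever one works; the twist does not affect temperedness.
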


\begin{proof}

It suffices to discuss the case of $\pi_v$.  Labesse \cite[Corollary 5.3]{Lab} shows that $\pi$ admits a weak base change to ${\rm GL}_{n+1}(\A_E)$, which we denote by $\Pi$, that is equal to the isobaric direct sum $\Pi_1 \boxplus \ldots \boxplus \Pi_r$ of discrete, conjugate self-dual representations $\Pi_i$ of ${\rm GL}_{n_i}(\A_E)$.  Moreover, he proves that $\pi$ and $\Pi$ satisfy the usual local compatibility at places that split in $E/F$.  Because $\pi_w$ is an irreducible principal series representation, it is generic, and by local-global compatibility $\Pi_u$ is also generic for $u | w$.  This implies that each of the discrete representations $\Pi_i$ is in fact cuspidal.

The infinitesimal character of $\pi_\infty$ is regular C-algebraic, which by \cite[Corollary 5.3]{Lab} implies that the same is true for $\Pi_\infty$.  If we let $\eta$ be a unitary character of $\A_E^\times / E^\times$ whose components at archimedean places are equal to $z / |z|$, it follows as in \cite[Lemma 6.1]{MS} that for each $i$, either $\Pi_i$ or $\Pi_i \otimes \eta$ has regular C-algebraic infinitesimal character.  We may then apply \cite{Caraiani} to deduce that $\Pi_i$ is tempered at all places for all $i$.  This implies that $\Pi$ is tempered at all places, and hence that $\pi$ is tempered at all split $v$ by local-global compatibility.

\end{proof}

\section{Amplification}
\label{sec:amp}

We now begin the proof of Theorem \ref{mainperiod}.    We recall the notation associated with that theorem, including the representations $\pi$ and $\pi_H$, and the automorphic forms $\phi = \otimes_v \phi_v$ and $\phi_H = \otimes_v \phi_{H,v}$.  Let $(J, \tl)$ and $(J_H, \tl_H)$ be the compatible $(\chi, \chi_H)$-types associated with $\phi$ and $\phi_H$.  We may assume without loss of generality that $\pi_\infty$ is isomorphic to a fixed irreducible representation $\mu$ of $G_\infty$.

It will be convenient to enlarge the period $\cP( \phi, \overline{\phi}_H )$ to the subgroup $\widetilde{H}$.  We therefore define $\phi_{ \widetilde{H}} = \otimes_v \phi_{ \widetilde{H},v}$ to be the unique extension of $\phi_H$ to a function on $[\widetilde{H}]$ that transforms under $Z$ according to the central character of $\phi$.  We also define the enlarged period
\[
\widetilde{\cP}( \phi, \overline{\phi}_{ \widetilde{H}} ) = \int_{[\widetilde{H}]} \phi(h) \overline{\phi}_{ \widetilde{H}}(h) dh.
\]
We have $\widetilde{\cP}( \phi, \overline{\phi}_{ \widetilde{H}} ) = \text{vol}( [Z]) \cP( \phi, \overline{\phi}_H )$.  We have the following variant of Lemma \ref{ampineq}: if $k_0 \in C^\infty_c( G(\A))$, and $k = k_0 * k_0^\vee$, then
\be
\label{amp ineq 2}
\text{vol}( [Z])^2 | \cP( R(k_0) \phi, \overline{\phi}_H ) ) |^2 = | \widetilde{\cP}( R(k_0) \phi, \overline{\phi}_{ \widetilde{H}} ) ) |^2 \leqslant \int_{ [\widetilde{H}] \times [\widetilde{H}]} \overline{\phi}_{ \widetilde{H}}(x) \phi_{ \widetilde{H}}(y) \sum_{\gamma \in G(F) } k(x ^{-1} \gamma y) dx dy.
\ee

For every $v \in \cP \setminus \{ w \}$ and $1 \leqslant j \leqslant n+1$, let $\lambda_\phi( v, j)$ be the eigenvalue of $\tau(v, [j])$ on $\phi$, and let $c(v, j) = \lambda_\phi( v, j) / | \lambda_\phi( v, j)|$, with the convention that $0/0 = 0$.  It follows from Proposition \ref{amplifier} that for all but finitely many $v$ we have $\lambda_\phi( v, j) \gg 1$ for some $j$.

We next define the test functions we shall use in our amplification inequality.  At the place $w$, it will be convenient to work on cosets of the subgroup $K_w(1)$, which we denote by $K_w^1$ for brevity.  We likewise define $K_{H,w}^1 = K_{H,w}(1)$, $J^1 = J \cap K_w^1$, and $J_H^1 = J_H \cap K_{H,w}^1$.  We choose our test function $k_{0,w}$ to be the function supported on $J^1$ and equal to $q^{n(n+1)l} \tl^{-1}$ there.   We choose the archimedean test function $k_{0,\infty} = d_\mu \chi_\mu$ as in Section \ref{sec:trivial proof}.

For $N > 0$ to be chosen later, define $\cP_N = \{ v \in \cP \setminus \{ w \} : N/2 < q_v < N \}$.   For $1 \leqslant j \leqslant n+1$ we let
\[
T_j^0 = \sum_{v \in \cP_N} \overline{c(v, j)} \tau(v, [j]),
\]
which we think of as an element of $\cH^w$.  For each such $j$, we define the test function $k^0_j$ to be the product $k_{0,\infty} k_{0,w} T_j^0$.  We define $T_j = T_j^0 * (T_j^0)^\vee$, and $k_j = k^0_j * (k^0_j)^\vee$, and note that the latter is equal to $k_{0,\infty} k_{0,w} T_j$ up to a nonzero constant depending only on $n$ and $w$.  We also define $T = \sum_{j=1}^{n+1} T_j$ and $k = \sum_{j=1}^{n+1} k_j$.  If we apply (\ref{amp ineq 2}) to each $k^0_j$ and sum over $j$, we obtain
\[
\sum_{j=1}^{n+1} | \cP( R(k^0_j) \phi, \overline{\phi}_H ) ) |^2 \ll \int_{ [\widetilde{H}] \times [\widetilde{H}]} \overline{\phi}_{ \widetilde{H}}(x) \phi_{ \widetilde{H}}(y) \sum_{\gamma \in G(F) } k(x ^{-1} \gamma y) dx dy.
\]
$\phi$ is an eigenvector of each $k^0_j$, and if we let $\widehat{k}^0_j(\phi)$ denote the eigenvalue by which it acts, we have $\widehat{k}^0_j(\phi) \gg N^{1-\epsilon}$ for some $j$ by the remark about $\lambda_\phi( v, j)$ above.  It follows that
\[
\sum_{j=1}^{n+1} | \cP( R(k^0_j) \phi, \overline{\phi}_H ) ) |^2 \gg N^{2-\epsilon} | \cP( \phi, \overline{\phi}_H ) ) |^2,
\]
and hence that
\be
\label{amplified period bd}
N^{2-\epsilon} | \cP( \phi, \overline{\phi}_H ) ) |^2 \ll \int_{ [\widetilde{H}] \times [\widetilde{H}]} \overline{\phi}_{ \widetilde{H}}(x) \phi_{ \widetilde{H}}(y) \sum_{\gamma \in G(F) } k(x ^{-1} \gamma y) dx dy.
\ee
The reminder of the proof involves bounding the right hand side of (\ref{amplified period bd}), which we refer to as the geometric side of the relative trace inequality.  We divide this into the diagonal term
\[
{\rm D}(\phi_{ \widetilde{H}}) = \int_{ [\widetilde{H}] \times [\widetilde{H}]} \overline{\phi}_{ \widetilde{H}}(x) \phi_{ \widetilde{H}}(y) \sum_{\gamma \in \widetilde{H}(F) } k(x ^{-1} \gamma y) dx dy,
\]
and the off-diagonal term
\[
{\rm OD}(\phi_{ \widetilde{H}}) = \int_{ [\widetilde{H}] \times [\widetilde{H}]} \overline{\phi}_{ \widetilde{H}}(x) \phi_{ \widetilde{H}}(y) \sum_{\gamma \in G(F) - \widetilde{H}(F) } k(x ^{-1} \gamma y) dx dy.
\]
We note that ${\rm D}(\phi_{ \widetilde{H}}) = \langle \pi_{\widetilde{H}}( k|_{\widetilde{H}}) \phi_{ \widetilde{H}}, \phi_{ \widetilde{H}} \rangle$, where $\pi_{\widetilde{H}}$ is the automorphic representation of $\widetilde{H}$ extending $\pi_H$.

\subsection{Bounding the diagonal term}
\label{sec:diagonal}

The diagonal term is controlled by the next proposition.

\begin{prop}
\label{diagonal bound}

We have
\be
\label{geometric diagonal}
{\rm D}(\phi_{ \widetilde{H}}) = \langle \pi_{\widetilde{H}}( k|_{\widetilde{H}}) \phi_{ \widetilde{H}}, \phi_{ \widetilde{H}} \rangle \ll_\epsilon N^{1 + \epsilon} d_\mu^2 q^{nl}.
\ee

\end{prop}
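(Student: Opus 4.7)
The plan is to exploit the product structure of both $\phi_{\widetilde{H}}$ (a pure tensor, spherical at unramified places) and each $k_j$ (proportional to $k_{0,\infty} \otimes k_{0,w} \otimes T_j$) to factor the matrix coefficient $\langle \pi_{\widetilde{H}}(k_j|_{\widetilde{H}}) \phi_{\widetilde{H}}, \phi_{\widetilde{H}}\rangle$ as a product of three local matrix coefficients: one at infinity, one at $w$, and one over $\cP_N$. I would further expand
\[
T_j = \sum_{v_1, v_2 \in \cP_N} \overline{c(v_1,j)} \, c(v_2,j) \, \tau(v_1,[j]) * \tau(v_2,[-j]),
\]
splitting into an off-diagonal piece ($v_1 \neq v_2$, a genuine tensor product of two Hecke operators) and a diagonal piece ($v_1 = v_2$, rewritten via Proposition \ref{amplifier}(b) as $\sum_{i=0}^{j} c_{vij}\,\tau(v,[i,-i])$). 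Every resulting summand is a tensor product of at most two local operators, each of which acts on the corresponding local component of $\phi_{\widetilde{H}}$ through its eigenvalue.

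For the archimedean factor I use the trivial $L^1$ bound: $|\chi_\mu| \leq d_\mu$ pointwise and $\textup{vol}(\widetilde{H}_\infty) = 1$ give $\|d_\mu \chi_\mu|_{\widetilde{H}_\infty}\|_1 \leq d_\mu^2$. For the factor at $w$, I bound by $\|k_{0,w}|_{\widetilde{H}_w}\|_1 = q^{n(n+1)l}\,\textup{vol}(J^1 \cap \widetilde{H}_w)$. The scalar embedding $Z_w \hookrightarrow G_w$ places $Z_w \cap K_w$ inside $\widetilde{T}(l)$ and hence inside $J$ (since $g_0$-conjugation fixes scalars), and the bottom-right entry of any $\widetilde{h} \in J \cap \widetilde{H}_w \subseteq K_w$ is forced to be a unit; combined with Lemma \ref{Atrans} this yields $J \cap \widetilde{H}_w = (Z_w \cap K_w) \cdot K_{H,w}(l)$. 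Intersecting with $K_w^1$ further constrains the scalar factor to lie in $1 + \p$, so $\textup{vol}(J^1 \cap \widetilde{H}_w) \asymp q^{-1-n^2 l}$ and the $w$-factor is $O(q^{nl})$.

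For the Hecke factor, the temperedness of $\pi_H$ at split places (Lemma \ref{tempered}) makes Lemma \ref{Hecke restriction} applicable with $\theta = 0$: for $v \in \cP_N$ and any $\mu \in \{[j], [-j], [i,-i] : i \geq 1\}$, none of which is a multiple of $(1,\ldots,1)$, the eigenvalue of $\tau(v,\mu)|_{\widetilde{H}}$ on $\phi_{\widetilde{H},v}$ is $O(q_v^{-1/2}) = O(N^{-1/2})$, while $\tau(v,0)$ acts as the identity. Using $|c(v,j)| \leq 1$, $|c_{vij}| \ll 1$, and the prime-counting bound $|\cP_N| \ll N/\log N$, the off-diagonal piece contributes at most $\bigl(\sum_{v \in \cP_N} q_v^{-1/2}\bigr)^2 \ll N^{1+\epsilon}$ to the $T_j$-eigenvalue; the $i = 0$ diagonal piece contributes $\ll |\cP_N| \ll N^{1+\epsilon}$; and the $i \geq 1$ diagonal piece contributes $\ll |\cP_N| \cdot N^{-1/2} \ll N^{1/2+\epsilon}$. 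Multiplying the three local bounds and summing over $j \in \{1, \ldots, n+1\}$ yields the claimed $D(\phi_{\widetilde{H}}) \ll N^{1+\epsilon} d_\mu^2 q^{nl}$.

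The main obstacle is the local structure at $w$, specifically justifying the splitting $J \cap \widetilde{H}_w = (Z_w \cap K_w) \cdot K_{H,w}(l)$ — the key point being that the $(n+1,n+1)$ entry of any element of $\widetilde{H}_w \cap K_w$ is a unit, which extracts a unique central factor whose $H_w$-complement then lies in $J \cap H_w = K_{H,w}(l)$ by Lemma \ref{Atrans} — and then tracking how the additional $K_w^1$ constraint tightens each factor. The rest of the argument is a straightforward combination of Proposition \ref{amplifier}(b), Lemma \ref{Hecke restriction}, and Chebotarev-type prime counting, with the temperedness input serving as a clean black box.
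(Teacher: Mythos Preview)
Your proof is correct and follows essentially the same approach as the paper: factor $\langle \pi_{\widetilde{H}}(k_j|_{\widetilde{H}}) \phi_{\widetilde{H}}, \phi_{\widetilde{H}}\rangle$ into local pieces, bound the archimedean and $w$-factors by $d_\mu^2$ and $q^{nl}$ respectively, then expand $T_j$ into diagonal and off-diagonal sums and apply Proposition~\ref{amplifier}(b) and Lemma~\ref{Hecke restriction} with $\theta=0$. Your treatment at $w$ is in fact slightly more explicit than the paper's, which simply cites the analogous computation for $J\cap H_w$ from Section~\ref{sec:trivial proof}; your determination of $J^1\cap\widetilde{H}_w = (1+\p)\cdot K_{H,w}(l)$ via the $(n+1,n+1)$-entry argument is correct and makes the passage from $H$ to $\widetilde{H}$ and from $J$ to $J^1$ transparent.
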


\begin{proof}

It suffices to prove the bound for each $k_j$, or equivalently for $k_{0,\infty} k_{0,w} T_j$.  We have
\[
\langle \pi_{\widetilde{H}}( k_{0,\infty} k_{0,w} T_j |_{\widetilde{H}}) \phi_{ \widetilde{H}}, \phi_{ \widetilde{H}} \rangle = \langle \pi_{\widetilde{H}}( k_{0,\infty} |_{\widetilde{H}}) \phi_{ \widetilde{H}, \infty}, \phi_{ \widetilde{H}, \infty} \rangle \langle \pi_{\widetilde{H}}( k_{0,w} |_{\widetilde{H}}) \phi_{ \widetilde{H}, w}, \phi_{ \widetilde{H}, w} \rangle \langle \pi_{\widetilde{H}}( T_j |_{\widetilde{H}}) \phi_{ \widetilde{H}}^{w \infty}, \phi_{ \widetilde{H}}^{w \infty} \rangle.
\]
We have $\langle \pi_{\widetilde{H}}( k_{0,\infty}|_{\widetilde{H}}  ) \phi_{\widetilde{H},\infty}, \phi_{\widetilde{H},\infty} \rangle \ll d_\mu^2$ as in Section \ref{sec:trivial proof}.  The comments at the end of Section \ref{sec:trivial proof} show that $\phi_{\widetilde{H}, w}$ is an eigenvector of $k_{0,w}|_{\widetilde{H}}$ with eigenvalue $\sim q^{nl}$, so that
\[
\langle \pi_{\widetilde{H}}( k_{0,w}|_{\widetilde{H}} ) \phi_{\widetilde{H}, w}, \phi_{\widetilde{H}, w} \rangle \ll  q^{nl}.
\]
It therefore suffices to show that
\be
\label{Tjbound}
\langle \pi_{\widetilde{H}}( T_j |_{\widetilde{H}}) \phi_{ \widetilde{H}}^{w \infty}, \phi_{ \widetilde{H}}^{w \infty} \rangle \ll_\epsilon N^{1 + \epsilon}.
\ee

To do this, we write $T_j$ as a sum of factorizable terms, and apply Lemma \ref{Hecke restriction} to each of them.  Expanding $T_j$ gives
\[
T_j = \sum_{v \in \cP_N} | c(v,j)|^2 \tau(v, [j]) \tau(v, [-j]) + \sum_{v_1 \neq v_2 \in \cP_N} c(v_1, j) \overline{c(v_2, j)} \tau(v_1, [j]) \tau(v_2, [-j]),
\]
which we may write as $T_j = T_j^I + T_j^{II}$.  We use Proposition \ref{amplifier} to simplify $T_j^I$ as
\[
T_j^I = \sum_{v \in \cP_N} \sum_{i = 0}^j c_{vij} | c(v,j)|^2 \tau(v, [i,-i]).
\]
Lemma \ref{Hecke restriction} gives
\be
\label{k1bound}
\left\langle \pi_{\widetilde{H}}\left(  \tau(v, [i,-i]) \big|_{\widetilde{H}} \right) \phi_{\widetilde{H}, v}, \phi_{\widetilde{H}, v} \right\rangle \ll \Bigg\{ \begin{array}{ll} 1, & i = 0, \\  q_v^{-1/2}, & i \geqslant 1, \end{array}
\ee
and summing this over $v \in \cP_N$ and $i$ gives
\[
\langle \pi_{\widetilde{H}}( T_j^I |_{\widetilde{H}}) \phi_{ \widetilde{H}}^{w \infty}, \phi_{ \widetilde{H}}^{w \infty} \rangle \ll \sum_{v \in \cP_N} 1 + q_v^{-1/2} \ll N.
\]
Likewise, Lemma \ref{Hecke restriction} gives
\[
\left\langle \pi_{\widetilde{H}}\left(  \tau(v_1, [j]) \tau(v_2, [-j]) \big|_{\widetilde{H}} \right) \phi_{ \widetilde{H}}^{w \infty}, \phi_{ \widetilde{H}}^{w \infty} \right\rangle \ll (q_{v_1} q_{v_2})^{-1/2},
\]
and summing over $v_1$ and $v_2$ gives
\[
\langle \pi_{\widetilde{H}}( T_j^{II} |_{\widetilde{H}}) \phi_{ \widetilde{H}}^{w \infty}, \phi_{ \widetilde{H}}^{w \infty} \rangle \ll \left( \sum_{v \in \cP_N} q_v^{-1/2} \right)^2 \ll_\epsilon N^{1 + \epsilon}.
\]
This completes the proof of (\ref{Tjbound}), and hence of the proposition.

\end{proof}

\subsection{Bounding the off-diagonal term}
\label{sec:off-diag}

We next bound the off-diagonal term ${\rm OD}(\phi_{\widetilde{H}})$.  We shall forgo cancellation in the integrals over $[\widetilde{H}]$, and take absolute values everywhere, which gives
\[
{\rm OD}(\phi_{\widetilde{H}}) \leqslant \int_{ [\widetilde{H}] \times [\widetilde{H}]} | \phi_{ \widetilde{H}}(x) \phi_{ \widetilde{H}}(y) | \sum_{\gamma \in G(F) - \widetilde{H}(F) } |k(x ^{-1} \gamma y)| dx dy.
\]
We let $\Omega = \prod_v \Omega_v \subset H(\A)$ be a compact set containing a fundamental domain for $[\widetilde{H}]$.  After enlarging $S$ if necessary, we assume that $\Omega_v = K_{\widetilde{H},v}$ at places $v \notin S$.  We may expand the integrals over $[\widetilde{H}]$ to $\Omega$, which gives
\be
\label{ODsum}
{\rm OD}(\phi_{\widetilde{H}}) \leqslant \int_{ \Omega \times \Omega} | \phi_{ \widetilde{H}}(x) \phi_{ \widetilde{H}}(y) | \sum_{\gamma \in G(F) - \widetilde{H}(F) } |k(x ^{-1} \gamma y)| dx dy = \sum_{\gamma \in G(F) - \widetilde{H}(F) } I(\phi_{\widetilde{H}}, \gamma),
\ee
where
\[
I(\phi_{\widetilde{H}}, \gamma) = \int_{ \Omega \times \Omega} | \phi_{ \widetilde{H}}(x) \phi_{ \widetilde{H}}(y) k(x ^{-1} \gamma y)| dx dy.
\]
The trivial bound for $I(\phi_{\widetilde{H}}, \gamma)$ is $I(\phi_{\widetilde{H}}, \gamma) \ll d_\mu^2 q^{nl} | T(\gamma)|$, and we shall improve this to
\be
\label{Iimproved}
I(\phi_{\widetilde{H}}, \gamma) \ll d_\mu^2 q^{(n-1/2)l} N^{(n+1)/2} | T(\gamma)|;
\ee
note that we will choose $N$ to be small with respect to $q^l$, so this is in fact a strengthening.  This bound is the key ingredient in the proof of Theorem \ref{mainperiod}, as the saving of $q^{-l/2}$ it gives will imply a corresponding saving for ${\rm OD}(\phi_{\widetilde{H}})$, from which Theorem \ref{mainperiod} follows.  We prove (\ref{Iimproved}) by reducing it to a bound for the norm of an integral operator on $L^2(K_{H,w}^1)$, stated as Proposition \ref{offdiagw} below, and which we establish in Section \ref{sec:offdiag}.

To reduce (\ref{Iimproved}) to Proposition \ref{offdiagw}, we first isolate the local behavior of $I(\phi_{\widetilde{H}}, \gamma)$ at $w$ by introducing an extra integral over $K_{H,w}^1$, which gives
\be
\label{Igamma2}
I(\phi_{\widetilde{H}}, \gamma) =  \text{vol}(K_{H,w}^1)^{-2} \int_{ \Omega \times \Omega} \int_{K_{H,w}^1 \times K_{H,w}^1} | \phi_{ \widetilde{H}}(x h_1) \phi_{ \widetilde{H}}(y h_2) k(h_1^{-1} x^{-1} \gamma y h_2)| dh_1 dh_2 dx dy.
\ee
For $x \in \widetilde{H}(\A)$ we define $f_{H,x}: K_{H,w}^1 \to \C$ by $f_{H,x}(h) = |\phi_{ \widetilde{H}}(xh)|$.  With this notation, we may write the inner integral over $K_{H,w}^1 \times K_{H,w}^1$ as
\begin{multline}
\label{Igamma3}
\int_{K_{H,w}^1 \times K_{H,w}^1} | \phi_{ \widetilde{H}}(x h_1) \phi_{ \widetilde{H}}(y h_2) k(h_1^{-1} x^{-1} \gamma y h_2)| dh_1 dh_2 = \\
 |k^w( x^{-1} \gamma y )| \int_{K_{H,w}^1 \times K_{H,w}^1} f_{H,x}( h_1) f_{H,y}(h_2) |k_w(h_1^{-1} x^{-1} \gamma y h_2)| dh_1 dh_2.
\end{multline}
We note that $| k_w |$ is equal to $q^{n(n+1)l} 1_{J^1}$, up to a constant factor depending only on $n$ and $w$, so we may simplify the integral on the RHS above to
\be
\label{KHlocal}
q^{n(n+1)l} \int_{K_{H,w}^1 \times K_{H,w}^1} f_{H,x}( h_1) f_{H,y}(h_2) 1_{J^1}(h_1^{-1} x^{-1} \gamma y h_2) dh_1 dh_2.
\ee

We shall bound (\ref{KHlocal}) by thinking of it as the matrix coefficient of an operator on $L^2(K_{H,w}^1)$.  For any $\delta \in K_w^1$, we define the operator $A_\delta$ on $L^2(K_{H,w}^1)$ by
\[
A_\delta f(x) = q^{n(n+1)l} \int_{K_{H,w}^1} 1_{J^1}( x^{-1} \delta y) f(y) dy.
\]
Because $J^1 \subset K_w^1$, we may assume that $x^{-1} \gamma y \in K_w^1$ in (\ref{KHlocal}) as the integral vanishes otherwise.  We may therefore rewrite (\ref{KHlocal}) as $\langle A_{x^{-1} \gamma y} f_{H,x}, f_{H,y} \rangle$.  The key to our off-diagonal estimate will be to bound this matrix coefficient under the assumption, provided by Lemma \ref{Hfar} below, that the $\gamma$ making a nonzero contribution to the sum in (\ref{ODsum}) have the property that $\gamma_w$ is bounded away from $K_{\widetilde{H},w}$, and hence that $x^{-1} \gamma y$ is bounded away from $K_{\widetilde{H},w}$ for $x, y \in \Omega$.  We note that we cannot bound $\langle A_{x^{-1} \gamma y} f_{H,x}, f_{H,y} \rangle$ by bounding the operator norm of $A_{x^{-1} \gamma y}$ alone, as the norm of $A_\delta$ will generically be $\gg q^{nl}$, even for $\delta$ that are far from $K_{ \widetilde{H},w}$.  We must also use the information that $\phi_H$ and $f_{H,x}$ are respectively equivariant, and invariant, under $J_H^1$.  We do this by defining $\Pi$ to be the operator on $L^2(K_{H,w}^1)$ given by averaging over $J^1_H$, so that
\[
\Pi f(x) = \text{vol}(J_H^1)^{-1} \int_{J_H^1} f(xg) dg,
\]
and proving the following bound for the norm of $\Pi A_\delta \Pi$.

\begin{prop}
\label{offdiagw}

Let $\delta \in K_w^1$.  The norms of $A_\delta$ and $\Pi A_\delta \Pi$ satisfy the trivial bound $\| A_\delta \| \ll q^{nl}$, and the improvement
\[
\| \Pi A_\delta \Pi \| \ll q^{nl} \min \{ 1, q^{-l/2} d(\delta, K_{\widetilde{H},w})^{-1/2} \}.
\]

\end{prop}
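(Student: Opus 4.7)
My plan is to apply Schur's test to the integral kernels of $A_\delta$ and $\Pi A_\delta \Pi$. Conjugating all data by an element of $K_{H,w}$, which preserves $d(\cdot, K_{\widetilde{H},w})$, I may assume the $\chi_H$-type is standard: $J_H = \widetilde{T}_H(l)$ and $J = \widetilde{T}(l)^{g_0}$ with $g_0$ as in Lemma~\ref{adjproj}. By Lemma~\ref{Atrans}, $J^1 \cap H_w = K_{H,w}(l)$, of volume $\asymp q^{-n^2 l}$ under the normalization $\text{vol}(K_{H,w}) = 1$.

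The trivial bound is immediate: the kernel $K_{A_\delta}(x,y) = q^{n(n+1)l} 1_{J^1}(x^{-1}\delta y)$ of $A_\delta$ has row sum $\ll q^{n(n+1)l}\,\text{vol}(K_{H,w}(l)) \ll q^{nl}$, since the support of $y \mapsto K_{A_\delta}(x,y)$ inside $K_{H,w}^1$ is, when non-empty, an $H_w$-translate of $J^1 \cap H_w$. The analogous bound holds for column sums, and Schur's test gives $\|A_\delta\| \ll q^{nl}$; since $\|\Pi\| = 1$, this passes to $\Pi A_\delta \Pi$.

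For the improvement, the kernel of $\Pi A_\delta \Pi$ is the bi-average
\[
K(x,y) = \frac{q^{n(n+1)l}}{\text{vol}(J_H^1)^2} \int_{J_H^1 \times J_H^1} 1_{J^1}\bigl(g_1^{-1} x^{-1} \delta y\,g_2^{-1}\bigr)\,dg_1\,dg_2.
\]
Swapping orders of integration, the inner $y$-integral is again at most $\text{vol}(K_{H,w}(l))$. Hence Schur's test reduces the improvement to proving the measure estimate
\[
\text{vol}(\mathcal{S}_x) \ll \text{vol}(J_H^1)^2 \cdot q^{-l/2}\, d(\delta, K_{\widetilde{H},w})^{-1/2}
\]
uniformly in $x \in K_{H,w}^1$, where $\mathcal{S}_x = \{(g_1, g_2) \in (J_H^1)^2 : \delta^{-1} x g_1 J^1 g_2 \cap K_{H,w}^1 \neq \emptyset\}$, together with a symmetric bound for the column sum.

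To prove the measure estimate, I write $d(\delta, K_{\widetilde{H},w}) = q^{-m}$ and decompose $\delta = k_0 \alpha$ with $k_0 \in K_w(m)$ and $\alpha \in K_{\widetilde{H},w}$. The condition $(g_1, g_2) \in \mathcal{S}_x$ becomes the existence of $j \in J^1$ with $x g_1 j g_2 \in \delta K_{H,w}^1$. Passing to the linearization $K_w(l')/K_w(l) \cong \p^{l'} \cL / \p^l \cL$, this becomes a system of linear equations modulo $\p^l$ relating the entries of $g_1$, $g_2$, the diagonal part of $j$, and the off-$\widetilde{H}$ entries of $k_0$ (the latter having $\p$-adic size $\geq q^{-m}$ by the definition of $m$). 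Lemma~\ref{g0entry} ensures that conjugation by $g_0$ non-degenerately couples the diagonal of $j$ to every off-diagonal position, producing $\asymp l - m$ independent linear constraints. Distributing these between $g_1$ and $g_2$ via Cauchy--Schwarz yields the $q^{-(l-m)/2}$ factor. The main obstacle is the explicit rank analysis of this constraint system: one must verify that the constraints are genuinely split between $g_1$ and $g_2$ and cannot be absorbed by the $K_{H,w}(l)$-component of $J_H^1 = \widetilde{T}_H(l) \cap K_{H,w}^1$, and that the count is uniform in $x$.
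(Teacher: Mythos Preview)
Your trivial bound is correct and matches the paper.  Your reduction for the improved bound via Schur's test on the kernel of $\Pi A_\delta \Pi$ is also a legitimate route, and in fact once you observe that the condition defining $\mathcal{S}_x$ is independent of $g_2$ (since $g_2 \in K_{H,w}^1$ can be absorbed into the $K_{H,w}^1$ on the right), it collapses to exactly the transversality statement the paper isolates as Proposition~\ref{transverse}: one needs $\textup{vol}(S_1 \cap x J_H^1) \ll \textup{vol}(J_H^1)\, q^{-l/2} d(\delta,K_{\widetilde{H},w})^{-1/2}$, where $S_1 = K_{H,w}^1 \cap \delta K_{H,w}^1 J^1$.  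The paper reaches the same estimate by a slightly different bookkeeping device, factoring $\Pi A_\delta \Pi = \Pi 1_{S_1} A_\delta 1_{S_2} \Pi$ and bounding $\|\Pi 1_{S_i}\|$ via $TT^*$; either reduction is fine.

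The real gap is in your proof of the transversality estimate.  You propose to linearize via $K_w(l')/K_w(l) \simeq \p^{l'}\cL/\p^l\cL$ and count ``$\asymp l-m$ independent linear constraints''.  This cannot work as stated, because $J^1 = g_0 T^1 g_0^{-1} K_w(\p^l)$ contains the full compact torus $g_0 T^1 g_0^{-1}$, which is not close to the identity and cannot be linearized; likewise $J_H^1 = T_H^1 K_{H,w}(\p^l)$.  The condition $\sigma t_H g_0 \in K_{H,w}^1 g_0 T^1 K_w(\p^l)$ is therefore not a linear congruence in $t_H$ but a condition modulo two genuine group actions (left by $H$, right by $T$).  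The paper's key idea, which your sketch is missing, is to pass to the invariant-theoretic quotient: the functions $P_iQ_i(g) = g_{(n+1)i}(g^{-1})_{i(n+1)}$ are $H \times T$-invariant, so the membership condition forces $P_iQ_i(\sigma t_H g_0) \equiv P_iQ_i(g_0) \pmod{\p^l}$.  Computing $P_iQ_i(\sigma t_H g_0)$ as a function of a single torus coordinate $t_{H,j}$ yields a Laurent polynomial $c_{-1}t^{-1}+c_0+c_1 t$ with $|c_1| = d(\delta,K_{\widetilde{H},w})$ (here Lemma~\ref{g0entry} is used, as you anticipated, to see that the constant term is a unit), and an elementary quadratic-root estimate (Lemma~\ref{poly}) then gives the $q^{-l/2}d^{-1/2}$ factor.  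Your ``distribute between $g_1$ and $g_2$ via Cauchy--Schwarz'' is a red herring: the saving comes entirely from a single torus direction on one side.
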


We will combine this with the following lower bound on $d(\gamma_w, K_{\widetilde{H},w})$ for the $\gamma$ contributing to (\ref{ODsum}).

\begin{lemma}
\label{Hfar}

If $\gamma \in G(F) - \widetilde{H}(F)$, and $x, y \in \Omega$ satisfy $k(x^{-1} \gamma y) \neq 0$, then $\gamma_w \in K_w$, and $d(\gamma_w, K_{\widetilde{H},w}) \gg N^{-n-1}$.

\end{lemma}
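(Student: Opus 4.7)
The plan is to apply the contrapositive of Lemma \ref{Hclose}, which states that if $\gamma \in G(F) - \widetilde{H}(F)$ and $\gamma_w \in K_w$, then
\[
d(\gamma_w, K_{\widetilde{H},w}) \geqslant C^{1/2} (\|\gamma\|^w)^{-1/2}.
\]
So the task reduces to checking that $\gamma_w \in K_w$, and then bounding $\|\gamma\|^w \ll N^{2(n+1)}$. The first point is immediate: $k_{0,w}$ is supported on $J^1 \subset K_w$, hence so is $k_w$; combined with $x_w, y_w \in \Omega_w = K_{\widetilde{H},w} \subset K_w$, the nonvanishing of $k(x^{-1}\gamma y)$ forces $\gamma_w \in K_w$.

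For the norm bound, I would write $k = \sum_{j=1}^{n+1} k_j$ and further decompose $T_j = T_j^I + T_j^{II}$ as in the proof of Proposition \ref{diagonal bound}. Using Proposition \ref{amplifier}(b), the support of a $T_j^I$ summand at a single place $v \in \cP_N$ lies in $\bigcup_{0 \leqslant i \leqslant j} K_v [i,-i](\varpi_v) K_v$, while each summand of $T_j^{II}$ is supported at two distinct $v_1, v_2 \in \cP_N$ in $K_{v_1} [j](\varpi_{v_1}) K_{v_1}$ and $K_{v_2} [-j](\varpi_{v_2}) K_{v_2}$, respectively. At the remaining finite places $v \neq w$, $k_v$ is supported on a fixed compact set (equal to $K_v$ for $v \notin S \cup \cP_N$, and to a fixed compact subset for $v \in S$), and the archimedean component is supported in the compact group $G_\infty$.

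Finally I would estimate each local factor of $\|\gamma\|^w = \prod_{u \nmid w} \|\gamma\|_u$ by means of Lemma \ref{Hecke height}. Archimedean and $S$-place contributions are absolutely bounded; places $v \notin S \cup \cP_N$ contribute $\|\gamma\|_u = 1$ since $\gamma_v \in K_v$. At a contributing $\cP_N$ place: for a $T_j^I$ term with coweight $[i,-i]$, Lemma \ref{Hecke height} gives $\|\gamma\|_u \|\gamma\|_{u'} = q_v^{2i} \leqslant N^{2(n+1)}$; for a $T_j^{II}$ term, the combined contribution from $v_1,v_2$ is $q_{v_1}^j \cdot q_{v_2}^j \leqslant N^{2(n+1)}$. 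In either case $\|\gamma\|^w \ll N^{2(n+1)}$, and Lemma \ref{Hclose} then yields $d(\gamma_w, K_{\widetilde{H},w}) \gg N^{-(n+1)}$. The only delicate point is tracking the double-coset support of $k$ after convolution with the dual amplifier, which is what necessitates the Blomer--Maga identity of Proposition \ref{amplifier}(b); once this is pinned down, the rest is a direct computation.
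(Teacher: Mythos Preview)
Your proof is correct and follows essentially the same route as the paper: show $\gamma_w \in K_w$ from the support of $k_w$, invoke Lemma \ref{Hclose} to reduce to bounding $\|\gamma\|^w$, then expand $T_j$ into its $T_j^I$ and $T_j^{II}$ pieces and apply Lemma \ref{Hecke height} at the contributing places in $\cP_N$ to obtain $\|\gamma\|^w \ll N^{2(n+1)}$. The paper phrases the expansion as ``Expanding $T_j$ as in the proof of Proposition \ref{diagonal bound}'', which is exactly your use of Proposition \ref{amplifier}(b); the only cosmetic difference is that the paper bounds each $\|\gamma\|_u$ separately by $N^{n+1}$ rather than bounding the product $\|\gamma\|_u\|\gamma\|_{u'}$ directly as you do.
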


\begin{proof}

We begin with the assertion that $\gamma_w \in K_w$.  We have $x_w, y_w \in \Omega_w = K_{\widetilde{H},w}$, and $x_w^{-1} \gamma_w y_w \in \text{supp}(k_w) \subset K_w$, so $\gamma_w \in K_w$ as required.

Because $\gamma_w \in K_w$ and $\gamma \notin \widetilde{H}(F)$, we may apply Lemma \ref{Hclose} to deduce that $d(\gamma_w, K_{\widetilde{H},w})^2 \gg 1/\| \gamma \|^w$, where $\| \gamma \|^w = \prod_{u \nmid w} \| \gamma \|_u$.  We must therefore show that $\| \gamma \|^w \ll N^{2n+2}$.  The contribution to $\| \gamma \|^w$ from places $u$ lying above $v \in S$ is bounded, by the condition that $\gamma_v$ lies in the fixed compact set $\Omega_v \text{supp}(k_v) \Omega_v^{-1}$.  We next bound the contribution from places $u$ lying above $v \notin S \cup \{ w \}$ using the fact that $\gamma^S$ must lie in $\text{supp}(T_j)$ for some $1 \leqslant j \leqslant n+1$.  Expanding $T_j$ as in the proof of Proposition \ref{diagonal bound} shows that we either have $\gamma^S \in \text{supp}( \tau(v_1, [i,-i]) )$ for $v_1 \in \cP_N$ and some $0 \leqslant i \leqslant j$, or $\gamma^S \in \text{supp}( \tau(v_1, [j]) \tau(v_2, [j]) )$ for $v_1, v_2 \in \cP_N$.  In the first case we have $\| \gamma \|_u = 1$ unless $u | v_1$, and if $v_1$ splits as $u_1 u'_1$ then Lemma \ref{Hecke height} gives
\[
\| \gamma \|_{u_1} = \| \gamma \|_{u'_1} = q_{v_1}^i < N^{n+1}
\]
so that $\| \gamma \|^w \ll N^{2n+2}$ as required.  We likewise have $\| \gamma \|^w \ll N^{2n+2}$ in the second case, which completes the proof.

\end{proof}

Lemma \ref{Hfar} implies that any $\gamma$ contributing to (\ref{ODsum}) must satisfy $d(\gamma_w, K_{\widetilde{H},w}) \gg N^{-n-1}$, and so the same is true for $x^{-1} \gamma y$ for $x, y \in \Omega$.  We may therefore apply Proposition \ref{offdiagw} with $\delta = x^{-1} \gamma y$ to obtain
\[
\langle A_{x^{-1} \gamma y} f_{H,x}, f_{H,y} \rangle = \langle \Pi A_{x^{-1} \gamma y} \Pi f_{H,x}, f_{H,y} \rangle \ll \| f_{H,x} \|_2 \| f_{H,y} \|_2 q^{(n - 1/2)l} N^{(n+1)/2},
\]
uniformly in $x$, $y$, and $\gamma$.  Substituting this into (\ref{Igamma3}), and then into (\ref{Igamma2}), gives
\be
\label{Ibound2}
I(\phi_{\widetilde{H}}, \gamma) \ll q^{(n - 1/2)l} N^{(n+1)/2} \int_{\Omega \times \Omega} |k^w( x^{-1} \gamma y )|  \| f_{H,x} \|_2 \| f_{H,y} \|_2 dx dy.
\ee
We next observe that $k^w( x^{-1} \gamma y ) \ll d_\mu^2 |T(\gamma)|$, uniformly in $x$ and $y$.  Indeed, we may write $k^w$ as a product $k_S k^{S \cup \{ w \} }$, and have $k_S( x^{-1} \gamma y ) \ll d_\mu^2$, and $k^{S \cup \{ w \} }( x^{-1} \gamma y ) = k^{S \cup \{ w \} }(\gamma) = T(\gamma)$.  Applying this in (\ref{Ibound2}), we have
\begin{align}
\notag
I(\phi_{\widetilde{H}}, \gamma) & \ll d_\mu^2 q^{(n - 1/2)l} N^{(n+1)/2} |T(\gamma)| \int_{\Omega \times \Omega} \| f_{H,x} \|_2 \| f_{H,y} \|_2 dx dy \\
\label{Ibound}
& = d_\mu^2 q^{(n - 1/2)l} N^{(n+1)/2} |T(\gamma)| \left( \int_\Omega \| f_{H,x} \|_2 dx \right)^2.
\end{align}
The Cauchy--Schwartz inequality gives
\[
\left( \int_\Omega \| f_{H,x} \|_2 dx \right)^2 \leqslant \text{vol}(\Omega) \int_\Omega \| f_{H,x} \|_2^2 dx \ll \| \phi_H \|_2^2 = 1,
\]
and applying this in (\ref{Ibound}) gives the required bound (\ref{Iimproved}) for $I(\phi_{\widetilde{H}}, \gamma)$.

With this, we may complete our bound for the geometric side of the trace formula.  Applying (\ref{Iimproved}) in (\ref{ODsum}) gives
\[
{\rm OD}(\phi_{\widetilde{H}}) \ll d_\mu^2 q^{(n - 1/2)l} N^{(n+1)/2} \sum_{\gamma \in G(F) } |T(\gamma)|.
\]
One may easily show that $\sum_{\gamma \in G(F) } |T(\gamma)| \ll \| T \|_1$, and it follows from the definition of $T$ that $\| T \|_1 \ll N^{n^2+n+2}$, so that
\[
{\rm OD}(\phi_{\widetilde{H}}) \ll d_\mu^2 q^{(n - 1/2)l} N^{(2n^2 + 3n+5)/2}.
\]
When combined with the diagonal estimate from Proposition \ref{diagonal bound}, we have our final estimate for the geometric side of the amplified relative trace formula, which is
\[
\int_{ [\widetilde{H}] \times [\widetilde{H}]} \overline{\phi}_{ \widetilde{H}}(x) \phi_{ \widetilde{H}}(y) \sum_{\gamma \in G(F) } k(x ^{-1} \gamma y) dx dy \ll_\epsilon d_\mu^2 q^{nl} N^{1 + \epsilon} + d_\mu^2 q^{(n - 1/2)l} N^{(2n^2 + 3n+5)/2}.
\]
Combining this with (\ref{amplified period bd}) gives
\[
| \cP( \phi, \overline{\phi}_H ) ) |^2 \ll_\epsilon d_\mu^2 q^{nl} N^{-1 + \epsilon} + d_\mu^2 q^{(n - 1/2)l} N^{(2n^2 + 3n+1)/2 + \epsilon}.
\]
Choosing $N = q^{\ell / (2n^2 + 3n + 3)}$ gives Theorem \ref{mainperiod}.

\section{The local off-diagonal bound}
\label{sec:offdiag}

This section contains the proof of Proposition \ref{offdiagw}, which is the key local ingredient in our bound for the off-diagonal terms in Section \ref{sec:amp}.  As we shall work locally at $w$ in this section, we shall omit $w$ from the notation everywhere.

\subsection{The trivial bound}

We begin with the proof of the trivial bound $\| A_\delta \| \ll q^{nl}$.  We derive this from the following standard bound for integral operators on a measure space, see for instance \cite[Thm. 6.18]{Fo}.

\begin{prop}
\label{kernelbound}

Let $K(x,y)$ be a measurable kernel function on a $\sigma$-finite measure space $(X,\mu)$.  Suppose there exists $C > 0$ such that $\int_X | K(x,y) | d\mu(y) \leqslant C$ for a.e. $x \in X$ and $\int_X | K(x,y) | d\mu(x) \leqslant C$ for a.e. $y \in X$, and $1 \leqslant p \leqslant \infty$.  If $f \in L^p(X)$, the integral
\[
Tf(x) = \int K(x,y) f(y) d\mu(y)
\]
converges absolutely for a.e. $x \in X$, the function $Tf$ thus defined is in $L^p(X)$, and $\| Tf \|_p \leqslant C \| f \|_p$.

\end{prop}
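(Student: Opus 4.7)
The plan is to establish $\|Tf\|_p \leqslant C\|f\|_p$ by treating the endpoint cases $p = 1$ and $p = \infty$ directly, and then proving the intermediate range $1 < p < \infty$ via H\"older's inequality. For $p = \infty$, the row-sum hypothesis immediately yields the pointwise estimate $|Tf(x)| \leqslant \|f\|_\infty \int |K(x,y)| d\mu(y) \leqslant C \|f\|_\infty$ for almost every $x$, and absolute convergence of the defining integral is clear. For $p = 1$, applying Tonelli's theorem to the nonnegative integrand $|K(x,y)||f(y)|$ and switching the order of integration gives
\[
\int |Tf(x)| d\mu(x) \leqslant \int |f(y)| \left( \int |K(x,y)| d\mu(x) \right) d\mu(y) \leqslant C \|f\|_1;
\]
the finiteness of this double integral then shows, again by Fubini, that the inner integral defining $Tf(x)$ converges absolutely for $\mu$-a.e.\ $x$.

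For $1 < p < \infty$, let $p'$ be the conjugate exponent. The main step is to factor $|K(x,y)| |f(y)| = |K(x,y)|^{1/p'} \cdot |K(x,y)|^{1/p} |f(y)|$ and apply H\"older's inequality with exponents $p'$ and $p$, which produces
\[
|Tf(x)| \leqslant \left( \int |K(x,y)| d\mu(y) \right)^{1/p'} \left( \int |K(x,y)| |f(y)|^p d\mu(y) \right)^{1/p} \leqslant C^{1/p'} \left( \int |K(x,y)| |f(y)|^p d\mu(y) \right)^{1/p}.
\]
Raising to the $p$-th power, integrating over $x$, applying Tonelli, and then invoking the column-sum hypothesis yields
\[
\int |Tf(x)|^p d\mu(x) \leqslant C^{p/p'} \int |f(y)|^p \left( \int |K(x,y)| d\mu(x) \right) d\mu(y) \leqslant C^{p/p' + 1} \|f\|_p^p = C^p \|f\|_p^p,
\]
which is the required $L^p$ bound. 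Almost-everywhere absolute convergence of $Tf$ again follows from the finiteness of this double integral together with Fubini.

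The only point requiring care is the repeated use of Fubini/Tonelli, which is legitimate because the underlying measure space is assumed $\sigma$-finite and we work throughout with the nonnegative integrand $|K(x,y)||f(y)|$. There is no substantive obstacle here: once the factorization $|K|^{1/p'} \cdot |K|^{1/p}$ is in hand, H\"older's inequality interpolates mechanically between the two trivial endpoint estimates, and the symmetric nature of the hypotheses on row- and column-sums is precisely what is needed for the exponents to balance to give $C^p$ in the end.
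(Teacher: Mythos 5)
Your proof is correct and is precisely the standard Schur-test argument: the paper gives no proof of this proposition, instead citing \cite[Thm. 6.18]{Fo}, and Folland's proof there is exactly your H\"older factorization $|K| = |K|^{1/p'}\cdot|K|^{1/p}$ combined with Tonelli, with the same treatment of the endpoints and of almost-everywhere absolute convergence. Nothing further is needed.
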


If we apply this to the operator $A_\delta$, with integral kernel $q^{n(n+1)l} 1_{J^1}( x^{-1} \delta y)$, it suffices to show that
\be
\label{kernelint}
\underset{x \in K^1_H}{\sup} \int_{K^1_H} 1_{J^1}( x^{-1} \delta y) dy = \underset{x \in K^1_H}{\sup} \text{vol}( K^1_H \cap \delta^{-1} x J^1) \ll q^{-n^2 l},
\ee
and likewise for the $x$-integrals, although we shall omit these as they are similar to the $y$-integrals.  To establish (\ref{kernelint}), let $x \in K^1_H$ be given.  We may suppose that there is some $y_0 \in K^1_H$ such that $y_0 \in \delta^{-1} x J^1$, and then we have
\[
\text{vol}( K^1_H \cap \delta^{-1} x J^1) = \text{vol}( K^1_H \cap y_0^{-1} \delta^{-1} x J^1) = \text{vol}( K^1_H \cap J^1).
\]
We have shown in Lemma \ref{Atrans} that $\text{vol}( K^1_H \cap J^1) \ll q^{-n^2l}$, which gives (\ref{kernelint}) and hence $\| A_\delta \| \ll q^{nl}$.

\subsection{Improving the trivial bound}

We next establish the improved bound
\[
\| \Pi A_\delta \Pi \| \ll q^{nl} \min \{ 1, q^{-l/2} d(\delta, K_{\widetilde{H},w})^{-1/2} \}.
\]
The bound $\| \Pi A_\delta \Pi \| \ll q^{nl}$ follows from $\| A_\delta \| \ll q^{nl}$ and $\| \Pi \| \leqslant 1$, so it suffices to prove
\be
\label{Aimproved}
\| \Pi A_\delta \Pi \| \ll q^{(n-1/2)l} d(\delta, K_{\widetilde{H},w})^{-1/2}.
\ee
We shall establish this by showing that the support of the integral kernel of $A_\delta$ is transverse to $J^1_H$.  In particular, we define
\[
S = \{ (x, y) \in K^1_H \times K^1_H : x^{-1} \delta y \in J^1 \},
\]
which is the support of the function $1_{J^1}( x^{-1} \delta y)$, and define
\begin{align*}
S_1 & = \{ x \in K^1_H :  x^{-1} \delta y \in J^1 \text{ for some } y \in K^1_H \}, \\
S_2 & = \{ y \in K^1_H :  x^{-1} \delta y \in J^1 \text{ for some } x \in K^1_H \},
\end{align*}
to be its projections to the first and second factors. We shall derive (\ref{Aimproved}) from the following transversality statement.

\begin{prop}
\label{transverse}

For any $x, y \in K^1_H$, we have
\[
\textup{vol}(S_1 \cap x J^1_H), \, \textup{vol}(S_2 \cap y J^1_H) \leqslant 2 q^{-l/2+1} d(\delta, K_{\widetilde{H},w})^{-1/2} \textup{vol}(J_H^1).
\]

\end{prop}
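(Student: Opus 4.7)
By the symmetry $\delta \leftrightarrow \delta^{-1}$, $x \leftrightarrow y$, which interchanges $S_1$ and $S_2$, it suffices to bound $\text{vol}(S_2 \cap y J_H^1)$. Unpacking the definition gives $S_2 = K_H^1 \cap \delta^{-1} K_H^1 J^1$. If $S_2 \cap y J_H^1$ is empty the bound is trivial; otherwise, pick $y_0 = \delta^{-1} k_0 j_0$ in the intersection with $k_0 \in K_H^1$ and $j_0 \in J^1$, use $y J_H^1 = y_0 J_H^1$, and translate by $y_0^{-1}$ to reduce to bounding $\text{vol}(\{h \in J_H^1 : j_0 h \in K_H^1 J^1\})$. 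Bi-invariance of $d(\,\cdot\,, K_{\widetilde H})$ under $K_H^1 \subset K_{\widetilde H}$ yields $d(j_0, K_{\widetilde H}) = d(\delta, K_{\widetilde H}) =: q^{-m}$. When $q^{1-(l-m)/2} \geq 1/2$ the claim exceeds $\text{vol}(J_H^1)$ and is trivial, so assume $m \leq l - 4$. By Proposition \ref{compat} we conjugate by an element of $K_H$ to assume $J_H = \widetilde T_H(l)$ is standard, whence $J = \widetilde T(l)^{g_0}$ with $g_0$ as in Lemma \ref{adjproj}; we then have $J_H^1 = T_{H,c}(1) K_H(l)$ and $J^1 = T_c(1)^{g_0} K(l)$.

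I then plan to analyze the condition $j_0 h \in K_H^1 J^1$ in the finite pro-$p$ quotient $K^1/K(l)$, using the bijective coordinate $v(g) = g - I$ valued in $\p M_{n+1}(\cO) / \p^l M_{n+1}(\cO)$, under which multiplication becomes $v(gh) = v(g) + v(h) + v(g) v(h)$ with the cross-term of strictly higher $\p$-adic valuation. Decomposing $v = \sum_a \varpi^a \alpha_a$ by levels, the equation $j_0 h = kj$ (with $k \in K_H^1$, $j \in J^1$) yields at each level $a$ an identity in $M_{n+1}(\F_q)$ relating $\eta_a \in \mathfrak{t}_{H,c}$, $\kappa_a \in \mathfrak{h}$, $\sigma_a \in \text{Ad}(g_0)(\mathfrak{t}_c)$, the fixed datum $\alpha_a^{(j_0)}$, and a bilinear cross-term in strictly lower-level variables. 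By Lemma \ref{g0entry} the intersection $\mathfrak{h} \cap \text{Ad}(g_0)(\mathfrak{t}_c)$ is zero, so $\mathfrak{s} := \mathfrak{h} + \text{Ad}(g_0)(\mathfrak{t}_c)$ has codimension $n$ in $M_{n+1}(\F_q)$; let $\pi : M_{n+1}(\F_q) \to \F_q^n$ be projection onto a complement. The hypothesis $d(j_0, K_{\widetilde H}) = q^{-m}$ translates to $\pi(\alpha_a^{(j_0)}) = 0$ for $a < m$ and $\pi(\alpha_m^{(j_0)}) \neq 0$, since $\mathfrak{z} + \mathfrak{h} \subset \mathfrak{s}$.

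Applying $\pi$ to the level-$a$ equation annihilates $\kappa_a + \sigma_a$ and, for $a \geq m$, produces a constraint coupling the lower-level variables through the cross-term and $\pi(\alpha_m^{(j_0)})$. The plan is to track these $l - m$ constraints level-by-level, eliminate the auxiliary $\kappa_a, \sigma_a$ using the freedom of dimension $\dim \mathfrak{s}$ available at each level, and thereby extract a family of linearly independent constraints on the $\eta$-variables. The non-degeneracy of the resulting bilinear pairing between $\text{Ad}(g_0)(\mathfrak{t}_c)$ and $\mathfrak{t}_{H,c}$ (induced by multiplication followed by $\pi$) follows from Lemma \ref{g0entry}, via the fact that $g_0$ and $g_0^{-1}$ have unit entries in every position. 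This forces the measure of admissible $h$ to drop by a factor of $q^{-(l-m)/2}$ relative to $\text{vol}(J_H^1)$, yielding the bound; the prefactor $2q$ absorbs boundary effects at levels $a = m$ and $a = l - 1$ and rounding in the parity of $l - m$.

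The main technical obstacle is the combinatorial bookkeeping that produces exactly the square-root saving $q^{-(l-m)/2}$ rather than a naive linear-per-level saving. Specifically, the cross-term $v(k) v(j)$ in the level-$a$ equation lets the auxiliary variables $\kappa, \sigma$ absorb roughly half of the constraints, leaving only $\lceil (l-m)/2 \rceil$ genuinely independent linear conditions on the $\eta$-variables after elimination. Making this precise requires identifying, for each pair of levels $(a, a') = (m+i, l-1-i)$, the duality between the constraint imposed at level $a$ and the freedom in $(\kappa_{a'}, \sigma_{a'})$; ruling out accidental cancellations in this pairing is again where the uniform unit-entry hypothesis of Lemma \ref{g0entry} is essential. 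This half-rank phenomenon is what propagates, via Schur's test, into the $q^{-l/2} d(\delta, K_{\widetilde H})^{-1/2}$ factor in the operator-norm bound on $\Pi A_\delta \Pi$ used in the off-diagonal estimate of Section \ref{sec:amp}.
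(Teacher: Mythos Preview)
Your reduction to bounding $\text{vol}(\{h \in J_H^1 : j_0 h \in K_H^1 J^1\})$ with $d(j_0, K_{\widetilde H}) = d(\delta, K_{\widetilde H})$ is fine, but the level-by-level analysis that follows contains a genuine error. You claim that $\pi(\alpha_m^{(j_0)}) \neq 0$, where $\pi$ is projection modulo $\mathfrak{s} = \mathfrak{h} + \text{Ad}(g_0)(\mathfrak{t}_c)$. But $j_0 \in J^1 = T_c(1)^{g_0} K(l)$, so modulo $K(l)$ one has $v(j_0) \in \text{Ad}(g_0)(\mathfrak{t}_c)$ at \emph{every} level $a < l$, and hence $\pi(\alpha_a^{(j_0)}) = 0$ for all $a$. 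The condition $d(j_0, K_{\widetilde H}) = q^{-m}$ says rather that $a = m$ is the first level at which $\alpha_a^{(j_0)} \notin \mathfrak{z}$, and this is invisible to $\pi$ since $\mathfrak{z} \subset \text{Ad}(g_0)(\mathfrak{t}_c) \subset \mathfrak{s}$. Worse, $\eta_a \in \mathfrak{t}_{H,c} \subset \mathfrak{h}$, $\kappa_a \in \mathfrak{h}$, and $\sigma_a \in \text{Ad}(g_0)(\mathfrak{t}_c)$ all lie in $\mathfrak{s}$, so after applying $\pi$ every linear term vanishes and the entire constraint lives in the bilinear cross-terms. The distance enters only through the fact that $\alpha_b^{(j_0)} \in \mathfrak{z}$ for $b < m$ forces $\pi(\alpha_b^{(j_0)} \eta_c) = 0$.

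Thus the saving is intrinsically nonlinear, and your ``half-rank'' bookkeeping --- which you already flag as the unresolved obstacle --- would have to be executed entirely at the quadratic level, with the $\kappa_b, \sigma_c$ determined recursively from lower levels and fed back into the cross-terms on the right-hand side. That may be salvageable, but it is substantially harder than what you have sketched, and the bilinear pairing you name is not obviously the one controlling the recursion. The paper bypasses all of this: it disintegrates $x J_H^1$ into $T_H^1$-cosets and uses the $H \times T$-invariant functions $P_i Q_i(g) = g_{(n+1)i}\,(g^{-1})_{i(n+1)}$ to convert the membership condition into a single congruence $P_i Q_i(\sigma t_H g_0) \equiv P_i Q_i(g_0) \pmod{\p^l}$. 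In one well-chosen coordinate $t_{H,j}$ this reads $c_{-1} t_{H,j}^{-1} + c_0 + c_1 t_{H,j} \equiv y \pmod{\p^l}$ with $|c_1| = d$, and an elementary quadratic estimate (Lemma~\ref{poly}) bounds the solution set by $2 d^{-1/2} q^{-l/2}$. The square-root is literally the one in the quadratic formula, not a combinatorial half-rank phenomenon.
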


The bound (\ref{Aimproved}) follows easily from Proposition \ref{transverse}, as we now demonstrate.  If we let $1_{S_1}$ and $1_{S_2}$ be the multiplication operators by the corresponding characteristic functions, then we are free to insert $1_{S_1}$ and $1_{S_2}$ before and after $A_\delta$ which gives
\[
\| \Pi A_\delta \Pi \| = \| \Pi 1_{S_1} A_\delta 1_{S_2} \Pi \| \leqslant \| \Pi 1_{S_1} \| \| A_\delta \| \| 1_{S_2} \Pi \| \ll q^{nl} \| \Pi 1_{S_1} \| \| 1_{S_2} \Pi \|.
\]
It therefore suffices to show that
\be
\label{Pi1bound}
\| \Pi 1_{S_1} \|, \, \| 1_{S_2} \Pi \| \ll q^{-l/4} d(\delta, K_{\widetilde{H},w})^{-1/4}.
\ee
We shall do this for $\| \Pi 1_{S_1} \|$, as the other case is similar.  If we use the identity $\| T \|^2 = \| T^* T \|$, valid for any bounded operator on a Hilbert space (where $T^*$ denotes the adjoint), we obtain $\| \Pi 1_{S_1} \|^2 = \| (\Pi 1_{S_1})^* \Pi 1_{S_1} \| = \| 1_{S_1} \Pi 1_{S_1} \|$.  The integral kernel of the operator $1_{S_1} \Pi 1_{S_1}$ is the function $\text{vol}( J_H^1)^{-1} 1_{S_1}(x) 1_{J^1_H}(x^{-1} y) 1_{S_1}(y)$, and we apply Proposition \ref{kernelbound} to this.  For any $x$ we have
\[
\text{vol}( J_H^1)^{-1} \int_{K^1_H} 1_{S_1}(x) 1_{J^1_H}(x^{-1} y) 1_{S_1}(y) dy \leqslant \text{vol}( J_H^1)^{-1} \text{vol}( S_1 \cap x J^1_H),
\]
and this is $\ll q^{-l/2} d(\delta, K_{\widetilde{H},w})^{-1/2}$ by Proposition \ref{transverse}.  The corresponding inequality for the $y$-integrals also holds, and so Proposition \ref{kernelbound} gives (\ref{Pi1bound}), and hence (\ref{Aimproved}).  This completes the proof of Proposition \ref{offdiagw}, assuming Proposition \ref{transverse}.

\subsection{Transversality}

We now prove Proposition \ref{transverse}.  We shall only prove the bound for $\textup{vol}(S_1 \cap x J^1_H)$, as the other bound is similar.  If we define $T^1 = T \cap K^1$ and $T_H^1 = T_H \cap K_H^1$, we may assume that $J^1 = g_0 T^1 g_0^{-1} K(\p^l)$ and $J_H^1 = T_H^1 K_H(\p^l)$ by the results of Section \ref{sec:compatpairs}, where $g_0 \in K$ is as in Lemma \ref{adjproj}.

For any $z \in K_H^1$, we define $I_z = \{ t_H \in T_H^1 : zt_H \in S_1 \} \subset T_H^1$.  By disintegrating $x J^1_H$ into cosets of $T^1_H$, we have
\[
\textup{vol}(S_1 \cap x J^1_H) \leqslant \text{vol}( J_H^1) \underset{z \in K_H^1}{\sup} \frac{ \text{vol}(I_z)}{ \text{vol}(T_H^1)},
\]
where the volumes on $T_H$ are taken with respect to the Haar measure obtained as the product of the measures $dt / |t|$ on $F^\times$.  Because this measure satisfies $\text{vol}(T_H^1) = q^{-n}$, it suffices to show that $\text{vol}(I_z) \leqslant 2 d(\delta, K_{\widetilde{H},w})^{-1/2} q^{-l/2-n+1}$ for all $z$.

An elementary manipulation gives that
\[
S_1 = K_H^1 \cap \delta K_H^1 J^1 = K_H^1 \cap \delta K_H^1 g_0 T^1 g_0^{-1} K(\p^l),
\]
and hence
\begin{align}
\notag
I_z & = \{ t_H \in T_H^1 : zt_H \in \delta K_H^1 g_0 T^1 g_0^{-1} K(\p^l) \} \\
\label{Iz}
& = \{ t_H \in T_H^1 : \delta^{-1} zt_H g_0 \in K_H^1 g_0 T^1 K(\p^l) \}.
\end{align}
For simplicity, we denote $\delta^{-1} z$ by $\sigma$.  The properties we shall need of $\sigma$ are that $\sigma \in K^1$, and $d(\sigma, K_{\widetilde{H}}) = d( \delta, K_{\widetilde{H}})$.

Roughly speaking, the condition $\sigma t_H g_0 \in K_H^1 g_0 T^1 K(\p^l)$ in (\ref{Iz}) says that the images of $\sigma t_H g_0$ and $g_0$ in $H \backslash G / T$ are equal `modulo $\p^l$'.  We may therefore study this condition using the invariant functions on $H \backslash G / T$.  For $1 \leqslant i \leqslant n+1$, we define the functions
\[
P_i(g) = g_{(n+1)i}, \quad Q_i(g) = (g^{-1})_{i(n+1)}
\]
on $G$, which are polynomials in $g_{ij}$ and $(\det g)^{-1}$ with integral coefficients.  We note the relation $\sum P_i Q_i = 1$.  These satisfy the equivariance relations $P_i(h g t) = t_i P_i(g)$ and $Q_i(h g t) = t_i^{-1} Q_i(g)$ for $h \in H$ and $t = \text{diag}(t_1, \ldots, t_{n+1}) \in T$, so that $P_i Q_i(g)$ is a function on $H \backslash G / T$.  We have the following lemma.

\begin{lemma}
\label{PQconst}

If $t_H \in I_z$, then we have $P_iQ_i( \sigma t_H g_0) \in P_iQ_i(g_0) + \p^l$ for all $i$.

\end{lemma}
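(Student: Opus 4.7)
The plan is to exploit the $H\times T$ bi-invariance of the functions $P_iQ_i$ together with an elementary $\p^l$-congruence argument. First, by the definition of $I_z$ given in (\ref{Iz}), if $t_H \in I_z$ we may write
\[
\sigma t_H g_0 = h\, g_0\, t\, k \qquad \text{with } h \in K_H^1,\ t \in T^1,\ k \in K(\p^l),
\]
where $\sigma = \delta^{-1} z$. The equivariances $P_i(h'g t') = t'_i P_i(g)$ and $Q_i(h' g t') = (t'_i)^{-1} Q_i(g)$ for $h' \in H$ and $t' \in T$ imply that $P_iQ_i$ is invariant on the left by $H$ and on the right by $T$. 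Applying left-$H$ invariance kills $h$, giving
\[
(P_iQ_i)(\sigma t_H g_0) = (P_iQ_i)(g_0 t k).
\]

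Next I would use that $K(\p^l)$ is normal in $K$ to rewrite $tk = k' t$ with $k' := tkt^{-1} \in K(\p^l)$; right-$T$ invariance of $P_iQ_i$ then yields
\[
(P_iQ_i)(\sigma t_H g_0) = (P_iQ_i)(g_0 k' t) = (P_iQ_i)(g_0 k').
\]
It remains to compare $P_iQ_i(g_0 k')$ with $P_iQ_i(g_0)$. Since $g_0 \in K$ and $k' = 1 + \varpi^l X$ for some $X \in \cL$, we have $g_0 k' \in g_0 + \p^l \cL$, so each entry of $g_0 k'$ agrees with the corresponding entry of $g_0$ modulo $\p^l$; in particular $P_i(g_0 k') \equiv P_i(g_0)\ (\p^l)$. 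Symmetrically, $(g_0 k')^{-1} = (k')^{-1} g_0^{-1}$ with $(k')^{-1} \in K(\p^l)$, so $(g_0 k')^{-1} \in g_0^{-1} + \p^l \cL$ and $Q_i(g_0 k') \equiv Q_i(g_0)\ (\p^l)$. Because $P_i(g_0), Q_i(g_0) \in \cO$ (in fact in $\cO^\times$ by Lemma \ref{g0entry}), these two congruences multiply to give $(P_iQ_i)(g_0 k') \equiv (P_iQ_i)(g_0)\ (\p^l)$, which combined with the previous identity yields the desired statement.

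The proof is essentially mechanical once one has the right invariance; the only subtlety is the reordering $tk \rightsquigarrow k' t$, handled by the normality of the principal congruence subgroup. I don't anticipate any real obstacle for this lemma in isolation — the substantive work lies in the subsequent step where one leverages this congruence, together with the regularity of $D(\chi)$ and $D(\chi_H)$ encoded in $g_0$, to bound $\textup{vol}(I_z)$ in terms of $d(\delta, K_{\widetilde H})$.
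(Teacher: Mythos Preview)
Your proof is correct and uses the same ingredients as the paper's: the factorization $\sigma t_H g_0 = k_H g_0 t k$ coming from the definition of $I_z$, the $H\times T$ bi-invariance of $P_iQ_i$, and an elementary $\p^l$-congruence. The paper's ordering is marginally more economical: it first compares $P_iQ_i(k_H g_0 t k)$ with $P_iQ_i(k_H g_0 t)$ directly via $k \equiv 1 \pmod{\p^l}$ (both matrices lie in $K$, so the integral polynomials $P_i$, $Q_i$ respect the congruence), and only then invokes bi-invariance to identify the latter with $P_iQ_i(g_0)$, thereby avoiding your normality/commutation step $tk = k't$.
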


\begin{proof}

By equation (\ref{Iz}), the condition $t_H \in I_z$ implies that $\sigma t_H g_0 \in K_H^1 g_0 T^1 K(\p^l)$, so we may write $\sigma t_H g_0 = k_H g_0 t k$ with $k_H \in K_H^1$, $t \in T$, and $k \in K(\p^l)$.  The entries of $k_H g_0 t$ and $k_H g_0 t k$ lie in $\cO$ and are congruent modulo $\p^l$, and when combined with the fact that the coefficients of $P_i$ and $Q_i$ are integral this gives $P_i Q_i( k_H g_0 t k) \in P_iQ_i( k_H g_0 t) + \p^l$.  The invariance of $P_i Q_i$ implies that $P_iQ_i( k_H g_0 t) = P_i Q_i(g_0)$, which completes the proof.

\end{proof}

We may therefore bound the volume of $I_z$ by finding an $i$ such that the polynomial $P_iQ_i( \sigma t_H g_0)$ on $T_H^1$ is not approximately constant.  If we write $t_H = \text{diag}(t_{H,1}, \ldots, t_{H,n})$, a calculation gives that
\begin{align*}
P_i(\sigma t_H g_0) & = \sum_{j = 1}^n \sigma_{(n+1)j} (g_0)_{ji} t_{H,j} + \sigma_{(n+1)(n+1)} (g_0)_{(n+1)i}, \\
Q_i(\sigma t_H g_0) & = \sum_{j = 1}^n (\sigma^{-1})_{j(n+1)} (g_0^{-1})_{ij} t_{H,j}^{-1} + (\sigma^{-1})_{(n+1)(n+1)} (g_0^{-1})_{i(n+1)}.
\end{align*}
We recall from Lemma \ref{g0entry} that all entries of $g_0$ and $g_0^{-1}$ lie in $\cO^\times$.  When combined with the fact that $\sigma \in K^1$, this implies that the constant terms of $P_i(\sigma t_H g_0)$ and $Q_i(\sigma t_H g_0)$ are in $\cO^\times$, and the norms of the coefficients of $t_{H,j}$ and $t_{H,j}^{-1}$ are equal to $| \sigma_{(n+1)j} |$ and $| (\sigma^{-1})_{j(n+1)} |$.

If we let $d = d(\sigma, K_{\widetilde{H}})$, we have $| \sigma_{(n+1)j} |, \, | (\sigma^{-1})_{j(n+1)} | \leqslant d$ for all $j$, with equality realized in at least one case.  We assume that $| \sigma_{(n+1)j} | = d$ for some $j$, as the case when $| (\sigma^{-1})_{j(n+1)} | = d$ is similar.  It follows that the coefficient of $t_{H,j}$ in $P_i(\sigma t_H g_0)$ has valuation $d$ for all $i$.  If we fix $t_{H,k}$ for $k \neq j$ and consider $P_i(\sigma t_H g_0)$ and $Q_i(\sigma t_H g_0)$ as functions of $t_{H,j}$ alone, we have
\be
\label{tHrational}
P_i Q_i(\sigma t_H g_0) = c_{-1} t_{H,j}^{-1} + c_0 + c_1 t_{H,j},
\ee
where $| c_{-1} | \leqslant d$, $|c_0| = 1$, and $|c_1| = d$.  If we apply Lemma \ref{poly} below to the function (\ref{tHrational}) with $y = P_i Q_i(g_0)$, and combine this with Lemma \ref{PQconst}, we see that the set of $t_{H,j}$ with $(t_{H,1}, \ldots, t_{H,j}, \ldots, t_{H,n}) \in I_z$ has measure at most $2 d^{-1/2} q^{-l/2}$.  Integrating in the other variables gives $\text{vol}(I_z) \leqslant 2 d^{-1/2} q^{-l/2-n+1}$ as required.

\begin{lemma}
\label{poly}

Let $f: 1 + \p \to \cO$ be given by $f(x) = c_{-1} x^{-1} + c_0 + c_1 x$, where $c_i \in \cO$, and $|c_1| = d$.  Then for any $y \in \cO$, we have $\textup{vol} ( f^{-1}( y + \p^l) ) \leqslant 2 d^{-1/2} q^{-l/2}$.

\end{lemma}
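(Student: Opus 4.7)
My plan is to reduce the estimate to counting the preimage of a small ball under a quadratic polynomial with controlled leading coefficient. The key observation is that every $x \in 1 + \p$ satisfies $|x| = 1$, so multiplying $f(x) - y$ by $x$ gives the identity
\[
| f(x) - y | \; = \; | x (f(x) - y) | \; = \; | P(x) |, \qquad P(x) := c_1 x^2 + (c_0 - y) x + c_{-1}.
\]
Hence $f^{-1}(y + \p^l) = \{ x \in 1 + \p : |P(x)| \leqslant q^{-l} \}$, and $P$ is a polynomial of degree $2$ with leading coefficient of norm $d$.

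The next step is to factor $P(x) = c_1 (x - \alpha_1)(x - \alpha_2)$ in $\overline{F}[x]$. The inequality $|P(x)| \leqslant q^{-l}$ is then equivalent to $|x - \alpha_1| \cdot |x - \alpha_2| \leqslant d^{-1} q^{-l}$, which forces $\min_i |x - \alpha_i| \leqslant d^{-1/2} q^{-l/2}$. So $f^{-1}(y + \p^l)$ is contained in the union of two balls of radius $d^{-1/2} q^{-l/2}$ in $\overline{F}$, centered at $\alpha_1$ and $\alpha_2$. By the ultrametric triangle inequality, the intersection of each such ball with $F$ is either empty or equal to a single disk in $F$ of radius at most $d^{-1/2} q^{-l/2}$ (pick any point $x_0$ in the intersection and re-center), hence of Haar measure at most $d^{-1/2} q^{-l/2}$. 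Summing over the two possible balls yields the desired bound $2 d^{-1/2} q^{-l/2}$.

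I do not anticipate any serious obstacle. The only minor point requiring care is that the roots $\alpha_1$ and $\alpha_2$ may lie in a quadratic extension of $F$ rather than in $F$ itself, which is precisely what necessitates the ultrametric re-centering step above.
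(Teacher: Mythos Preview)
Your proof is correct and follows essentially the same route as the paper: multiply through by $x$ to obtain a quadratic in $x$ with leading coefficient of norm $d$, then use that a product of two factors being small forces one factor to be at most the square root. The paper makes the substitution $x = 1+z$ and claims one may assume the constant term vanishes ``by performing another additive change of variable'' before factoring as $cz(b/c+z)$; your version, which factors directly over $\overline{F}$ and then re-centers any nonempty ball at an $F$-point via the ultrametric inequality, is arguably cleaner and handles the case of irrational roots without needing that reduction.
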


\begin{proof}

We may assume that $y = 0$.  We may then replace $f$ with $xf$, and write $x = 1+z$ for $z \in \p$.  This converts the problem to bounding $\text{vol} (g^{-1}(\p^l) )$ where $g(z) = a + bz + cz^2$, and $|c| = d$.  By performing another additive change of variable we may assume that $a = 0$.  If $| g(z) | = |cz(b/c+z)| \leqslant q^{-l}$, then we have either $|z| \leqslant d^{-1/2} q^{-l/2}$ or $|b/c + z| \leqslant d^{-1/2} q^{-l/2}$, and the set of such $z$ has volume $\leqslant 2 d^{-1/2} q^{-l/2}$ as required.

\end{proof}

\section{Proof of the subconvex bound}

We now combine Theorem \ref{mainperiod} with the Ichino--Ikeda formula of Beuzart-Plessis--Chaudouard--Zydor \cite{BPCZ} to deduce Theorem \ref{mainsubconvex}.

\subsection{The unitary Ichino--Ikeda formula}

In this subsection, we let $\pi \times \pi_H$ be a general tempered representation of $G \times H$, and recall the unitary Ichino--Ikeda formula for $\pi \times \pi_H$ as stated in \cite{BPCZ}.  We assume that $\pi \times \pi_H$ has a weak base change to ${\rm GL}_{n+1} \times {\rm GL}_n / E$, denoted $\Pi \times \Pi_H$, that satisfies the conditions of \cite[Section 1.1.3]{BPCZ}.  We refer to Remark \ref{tempered2} for discussion of the existence of such a lift.  We define the product of special $L$-values $\cL( 1/2, \pi \times \pi_H^\vee)$ by
\[
\cL( 1/2, \pi \times \pi_H^\vee) = \prod_{i = 1}^{n+1} \Lambda(i, \eta^i) \frac{ \Lambda(1/2, \Pi \times \Pi_H^\vee) }{ \Lambda(1, \Ad \, \pi) \Lambda(1, \Ad \, \pi_H^\vee) },
\]
where $\eta$ is the quadratic character associated to $E/F$, with corresponding local factors $\cL( 1/2, \pi_v \times \pi_{H,v}^\vee)$.  Our assumption that $\pi$ and $\pi_H$ are tempered implies that $\cL( 1/2, \pi \times \pi_H^\vee)$ and $\cL( 1/2, \pi_v \times \pi_{H,v}^\vee)$ are well-defined and nonzero.

We continue to use the measures on $G(\A)$ and $H(\A)$ defined in Section \ref{sec:measures}, and choose invariant inner products on $\pi_v$ and $\pi_{H,v}$ that factorize the global Peterssen inner product.  Let $\phi = \otimes_v \phi_v \in \pi$ and $\phi_H = \otimes_v \phi_{H,v} \in \pi_H$ be factorizable vectors, and assume that $\phi$ and $\phi_H$, as well as their local factors, have norm one.  We have $\overline{\phi}_H \in \pi_H^\vee$.  Moreover, if we identify $\pi_H^\vee$ and $\pi_{H,v}^\vee$ with the complex conjugate representations $\overline{\pi}_H$ and $\overline{\pi}_{H,v}$ via the Hermitian inner product, then we have local factors $\overline{\phi}_{H,v} \in \overline{\pi}_{H,v} \simeq \pi_{H,v}^\vee$ and a factorization $\overline{\phi}_H = \otimes_v \overline{\phi}_{H,v}$.  For every $v$, we define the normalized local period
\[
\cP_v^\natural(\phi_v \times \overline{\phi}_{H,v} ) = \cL( 1/2, \pi_v \times \pi_{H,v}^\vee)^{-1} \int_{H_v} \langle \pi_v(h) \phi_v, \phi_v \rangle \overline{ \langle \pi_{H,v}(h) \phi_{H,v}, \phi_{H,v} \rangle } dh.
\]
We recall that $\cP_v^\natural(\phi_v \times \overline{\phi}_{H,v} ) = 1$ if all data are unramified, so we may define the product of these periods over all $v$.  The period formula of \cite[Thm 1.1.6.1]{BPCZ} then implies that
\be
\label{IchinoIkeda}
| \cP_H( \phi, \overline{\phi}_H) |^2 = C \cL(1/2, \pi \times \pi_H^\vee) \prod_v \cP_v^\natural(\phi_v \times \overline{\phi}_{H,v} ),
\ee
where $C > 0$ is a constant such that $C$ and $C^{-1}$ are bounded depending on $n$ and the choice of measures.

\subsection{Proof of Theorem \ref{mainsubconvex}}

We recall the notation used in the statement of Theorem \ref{mainsubconvex}, including the finite sets $\Theta_\infty$ and $\Theta_{H,\infty}$ and the family $\cF_\text{SC}$, and we now let $\pi \times \pi_H$ be a representation in $\cF_\text{SC}$.  We will deduce Theorem \ref{mainsubconvex} by applying Theorem \ref{mainperiod} and the period relation (\ref{IchinoIkeda}) to vectors $\phi = \otimes_v \phi_v \in \pi$ and $\phi_H = \otimes_v \phi_{H,v} \in \pi_H$, which will be chosen so that the local periods $\cP_v^\natural(\phi_v \times \overline{\phi}_{H,v} )$ are controlled for all $v$.  As we aim to give an upper bound for  $L(1/2, \Pi \times \Pi_H^\vee)$, we only need to bound these periods from below.

We know that $\cP_v^\natural(\phi_v \times \overline{\phi}_{H,v} ) = 1$ if $v \notin S \cup \{ w \}$.  For $v = w$, we choose $\phi_w$ and $\phi_{H,w}$ to be compatible microlocal lift vectors with norm one.  Lemma \ref{Atrans} then implies that the un-normalized local period
\[
\int_{H_w} \langle \pi_w(h) \phi_w, \phi_w \rangle \overline{ \langle \pi_{H,w}(h) \phi_{H,w}, \phi_{H,w} \rangle } dh
\]
is equal to $\text{vol}( K_H(l) ) \sim q^{-n^2 l}$, which implies that the normalized period also satisfies $\cP_w^\natural(\phi_w \times \overline{\phi}_{H,w} ) \sim q^{-n^2l}$.  For finite primes $v \in S$, we use the local vectors given by following lemma, which is taken from Lemma 5.2 of \cite{Ne1}.

\begin{lemma}

Given $K_v$ and $K_{H,v}$, there is $c > 0$, and subgroups $K_v' < K_v$ and $K_{H,v}' < K_{H,v}$, such that there exist $\phi_v \in \pi_v^{K_v'}$ and $\phi_{H,v} \in \pi_{H,v}^{K_{H,v}'}$ such that $|\cP_v^\natural(\phi_v \times \overline{\phi}_{H,v} )| > c$.

\end{lemma}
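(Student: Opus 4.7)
The plan is to reduce the lemma to showing that the normalized local period, viewed as a sesquilinear form
\[
\mathcal{B}(\phi_v, \phi_{H,v}) := \cP_v^\natural(\phi_v \times \overline{\phi}_{H,v})
\]
on the smooth vectors of $\pi_v \otimes \overline{\pi_{H,v}}$, is not identically zero. Once this is known, one selects smooth unit vectors $\phi_v$ and $\phi_{H,v}$ at which $\mathcal{B}$ is nonzero, sets $c = |\mathcal{B}(\phi_v, \phi_{H,v})|/2$, and takes $K_v'$ and $K_{H,v}'$ to be the intersections of the (open) stabilizers of $\phi_v$ and $\phi_{H,v}$ with $K_v$ and $K_{H,v}$ respectively. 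This reduction is the organizing principle of the proof, and from here everything is either analytic convergence or an appeal to representation theory.

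First I would verify absolute convergence of the un-normalized local period integral. This is a consequence of the temperedness of $\pi_v$ and $\pi_{H,v}$ assumed in (C4): the matrix coefficients of tempered representations of a unitary GGP pair decay sufficiently quickly along $H_v$ to ensure integrability of the product, as established by Beuzart-Plessis in the course of his work on the local unitary Gan--Gross--Prasad conjecture. The resulting integral then defines an $H_v$-invariant sesquilinear form on the smooth vectors. The crux is to show that this form is not identically zero. Here I would invoke Beuzart-Plessis's theorem that, for tempered representations of unitary GGP pairs, local distinction (the existence of a nonzero element of $\Hom_{H_v}(\pi_v \otimes \pi_{H,v}^\vee, \C)$) is equivalent to the non-vanishing of the matrix coefficient integral on some pair of smooth vectors. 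Condition (C5) provides the distinction, so the un-normalized form is nonzero. Since $\pi_v$ and $\pi_{H,v}$ are tempered, every local L-factor in the product defining $\cL(1/2, \pi_v \times \pi_{H,v}^\vee)$ is finite and nonzero at the relevant argument, so dividing by $\cL(1/2, \pi_v \times \pi_{H,v}^\vee)$ preserves non-vanishing and $\mathcal{B}$ is also not identically zero.

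The principal obstacle is precisely this non-vanishing step, which relies on the deep local GGP theorem of Beuzart-Plessis for tempered representations of unitary groups. Without this equivalence one would only know, via multiplicity one for the unitary GGP branching problem, that the space of $H_v$-invariant forms is at most one-dimensional; one would not be able to identify the specific form given by the matrix coefficient integral with the distinguishing form guaranteed by (C5). Once the non-vanishing is in hand, the selection of smooth vectors and passage to their open stabilizers is routine, and the resulting $c$, $K_v'$, $K_{H,v}'$ depend only on the local data $(\pi_v, \pi_{H,v}, K_v, K_{H,v})$, as required.
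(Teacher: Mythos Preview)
Your argument establishes a weaker statement than the lemma asserts.  The phrase ``Given $K_v$ and $K_{H,v}$, there is $c > 0$, and subgroups $K_v' < K_v$ and $K_{H,v}' < K_{H,v}$'' means that $c$, $K_v'$, and $K_{H,v}'$ must depend only on $K_v$ and $K_{H,v}$ (and the ambient groups), and \emph{not} on the particular tempered distinguished pair $(\pi_v, \pi_{H,v})$.  You explicitly produce $c = |\mathcal{B}(\phi_v, \phi_{H,v})|/2$ and take $K_v'$, $K_{H,v}'$ to be stabilizers of vectors you have chosen inside $\pi_v$, $\pi_{H,v}$; these all vary with the representation, as you yourself note in the final sentence.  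So what you have proved is: for each fixed $(\pi_v, \pi_{H,v})$ there exist such data.  The lemma requires a single choice working for all $(\pi_v, \pi_{H,v})$ with $K_v$- and $K_{H,v}$-fixed vectors.

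This uniformity is not a technicality: it is exactly what is used downstream.  When the paper applies Theorem \ref{mainperiod} with $K_v$, $K_{H,v}$ replaced by $K_v'$, $K_{H,v}'$, the implied constant in that theorem depends on the compact subgroups, so they must be fixed independently of $\pi$.  Likewise, the lower bound $\prod_{v \in S} |\cP_v^\natural| \gg 1$ feeding into $q^{n^2 l} |\cP_H(\phi, \overline{\phi}_H)|^2 \gg \cL(1/2, \pi \times \pi_H^\vee)$ must hold with an implied constant independent of $\pi \times \pi_H \in \cF_{\rm SC}$, or the subconvex bound becomes vacuous.  Your convergence and non-vanishing steps via Beuzart-Plessis are fine as far as they go, but they do not supply this uniformity; bridging the gap requires a compactness or stability argument over the relevant set of local representations, which is the actual content of Nelson's Lemma 5.2 that the paper invokes rather than re-proves.
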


Finally, for $v = \infty$, there are a finite number of choices for $\pi_\infty$ and $\pi_{H,\infty}$, and for each we pick $\phi_\infty$ and $\phi_{H,\infty}$ such that $\cP_\infty^\natural(\phi_\infty \times \overline{\phi}_{H,\infty} ) \neq 0$.  With this choice of $\phi$ and $\phi_H$, the period formula (\ref{IchinoIkeda}) gives
\[
q^{n^2 l} | \cP_H( \phi, \overline{\phi}_H) |^2 \gg \cL(1/2, \pi \times \pi_H^\vee).
\]
Moreover, applying Theorem \ref{mainperiod} (with the subgroups $K_v$ and $K_{H,v}$ for finite $v \in S$ replaced with $K_v'$ and $K_{H,v}'$) gives
\be
\label{ellbound}
q^{(n^2 + n-2\delta_0)l} \gg \cL(1/2, \pi \times \pi_H^\vee)
\ee
for any $\delta_0 < \tfrac{1}{4n^2 + 6n + 6}$.

We have
\[
\cL(1/2, \pi \times \pi_H^\vee) \sim \frac{ L(1/2, \Pi \times \Pi_H^\vee) }{ L(1, \Ad \, \pi) L(1, \Ad \, \pi_H^\vee) }
\]
by our assumption that the local factors $\pi_\infty$ and $\pi_{H,\infty}$ lie in fixed finite sets, and also that
\[
L(1, \Ad \, \pi), \quad L(1, \Ad \, \pi_H^\vee) \ll q^{\epsilon l}
\]
which follows from the tempered assumption on $\pi$ and $\pi_H$ by a simple application of the functional equation and Phragm\'{e}n--Lindel\"{o}f.  Combining these with (\ref{ellbound}) gives
\[
q^{(n^2 + n-2\delta_0)l} \gg L(1/2, \Pi \times \Pi_H^\vee),
\]
which is the same as the bound $q^{l n (n+1)(1 - 4 \delta)} \gg L(1/2, \Pi \times \Pi_H^\vee)$ for any $\delta < \tfrac{1}{4n(n+1)(2n^2 + 3n + 3)}$ stated in Theorem \ref{mainsubconvex}.  Finally, as discussed in Remark \ref{conductor}, we have the bound $C(\Pi \times \Pi_H) \ll q^{4l n(n+1)}$, which completes the proof.

\end{document}